\documentclass[journal]{IEEEtran}
%\documentclass[12pt,draftcls,onecolumn]{IEEEtran}

%\addtolength{\oddsidemargin}{-.8in}
%\addtolength{\evensidemargin}{-.575in}
%\addtolength{\textwidth}{1.35in}

%\addtolength{\topmargin}{-.275in}
%\addtolength{\textheight}{0.2in}

\pdfminorversion=4
%\overrideIEEEmargins                                     
\IEEEoverridecommandlockouts                              % This command is only needed if 
\bstctlcite{IEEEexample:BSTcontrol}                                                          % you want to use the \thanks command
%\overrideIEEEmargin

%\usepackage{url}
%\usepackage{hyperref}
\usepackage{flushend} %%THIS command is for balancing the columns of last page
\usepackage{romannum}
\usepackage[usenames,dvipsnames]{xcolor}
\usepackage{tkz-berge}
\usetikzlibrary{fit,shapes}                                     
\usepackage{tikz}
\usepackage{calrsfs}
\DeclareMathAlphabet{\pazocal}{OMS}{zplm}{m}{n}
\usepackage{graphicx}
\usepackage{graphics} 
\usepackage{epsfig} 
\usepackage{mathptmx}
\usepackage{subcaption}
%\captionsetup{compatibility=false}
\usepackage{times} 
\usepackage{tikz}
\usetikzlibrary{positioning}
\usepackage{amsmath, amssymb}
\usepackage{amsthm}
\usepackage{amsfonts} 
\usepackage{setspace}
\usepackage{multirow}
\usepackage{textcomp}
\usepackage[english]{babel}
\usepackage{float} 
\usepackage{algorithmicx}
\usepackage[section]{algorithm}
\usepackage{algpascal}
\usepackage{algc}
\usepackage{algcompatible}
\usepackage{algpseudocode}
\let\oldReturn\Return
\renewcommand{\Return}{\State\oldReturn}
\usepackage{mathtools}
\usepackage{bm}
\usepackage{mathptmx}
\usepackage{times} 
\usepackage{mathtools, cuted}
\usepackage{textcomp}
\usepackage[english]{babel}
\usepackage{rotating}
\usepackage{pgfplots}
\pgfplotsset{compat=1.5}
\usepackage{pgfplotstable}
\usepackage{filecontents} 
\usepackage{makecell}
\usepackage{diagbox}

\newcommand{\B}{\pazocal{B}}
\newcommand{\C}{\pazocal{C}}
\newcommand{\K}{\pazocal{K}}

\newcommand{\D}{\pazocal{D}}
\newcommand{\E}{\pazocal{E}}

\newcommand{\U}{\pazocal{U}}

\newcommand{\J}{\pazocal{J}}
\newcommand{\I}{\pazocal{I}}
\newcommand{\sI}{\pazocal{\scriptscriptstyle I}}

\renewcommand{\P}{\pazocal{P}}

\renewcommand{\S}{\pazocal{S}}
\newcommand{\x}{{X}}
\renewcommand{\u}{{U}}

\newcommand{\remove}[1]{}
\newcommand{\G}{\scriptscriptstyle{G}}
%\addtolength{\columnsep}{-0.5mm}
%\numberwithin{thm}{section}

%\def \I{{\cal I}}

\def \*{\star}
\def \10n{\!\!\!\!\!\!\!\!\!\!}

\newcommand{\bK}{\bar{K}}

\newcommand{\bk}{\bar{K}}
\newcommand{\R}{\mathbb{R}}
\newcommand{\bA}{\bar{A}}
\newcommand{\bB}{\bar{B}}
\newcommand{\bC}{\bar{C}}

\newcommand{\tV}{\widetilde{V}}
\newcommand{\tv}{\tilde{v}}

\graphicspath{{Figures/}}

\makeatletter
\newcommand{\ssymbol}[1]{^{\@fnsymbol{#1}}}
\newcommand{\specificthanks}[1]{\@fnsymbol{#1}}% Inserts a specific \thanks symbol
\makeatother

\newtheorem{theorem}{Theorem}[section]
\newtheorem{lemma}[theorem]{Lemma}
\newtheorem{cor}[theorem]{Corollary}
\newtheorem{proposition}[theorem]{Proposition}

\theoremstyle{definition}
\newtheorem{definition}[theorem]{Definition}

\newtheorem{problem}[theorem]{Problem}

\theoremstyle{remark}

\newtheorem{remark}{Remark}
\newtheorem{assume}{Assumption}

%\renewcommand{\thesection}{\arabic{section}}
%\newcommand*{\defeq}{\mathrel{\vcenter{\baselineskip0.5ex \lineskiplimit0pt
%                     \hbox{\scriptsize.}\hbox{\scriptsize.}}}%
%                     =}
%\graphicspath{{Figures/}}

\title{\LARGE \bf Verification and Design of Resilient Closed-Loop Structured System}
\author{RaviTeja Gundeti,
		Shana~Moothedath
        and~Prasanna~Chaporkar
% <-this % stops a space
\thanks{The authors are in the Department of Electrical Engineering, Indian Institute of Technology Bombay, India. Email: ravi5gundeti@gmail.com, shana@ee.iitb.ac.in, chaporkar@ee.iitb.ac.in.}}

\begin{document}
\maketitle
\thispagestyle{empty}
\pagestyle{empty}
%%%%%%%%%%%%%%%%%%%%%%%%%%%%%%%%%%%%%%%%%%%%%%%%%%%%%%%%%%%%%%%%%%%%%%%%%%%%%%555
\begin{abstract}
This paper addresses resilience of large-scale closed-loop structured  systems in the sense of arbitrary pole placement when subject to failure of feedback links. Given a  structured system with input, output,  and feedback matrices, we first aim to verify whether the closed-loop structured system is resilient to simultaneous failure of {\em any} subset of feedback links of cardinality at most $\gamma$. Subsequently, we address the associated design problem in which given a  structured system with input and output matrices,  we need to design a sparsest feedback matrix that ensures the resilience of the resulting closed-loop structured system to simultaneous failure of at most {\em any} $\gamma$ feedback links. We first prove that the verification problem is NP-complete even for {\em  irreducible} systems and  the design problem is NP-hard even for so-called {\em structurally cyclic} systems. We also show that the design problem is  inapproximable to factor $(1 - o(1)) \log n$, where $n$ denotes the system dimension. Then we propose algorithms to solve  both the problems: a pseudo-polynomial algorithm to address the verification problem of irreducible systems  and a polynomial-time  $O(\log n)$-optimal approximation algorithm to solve the design problem for a special feedback structure, so-called {\em back-edge} feedback structure.
\end{abstract}
%%%%%%%%%%%%%%%%%%%%%%%%%%%%%%%%%%%%%%%%%%%%%%%%%%%%%%%%%%%%%
\begin{IEEEkeywords}
Resilient systems, structured linear time-invariant systems, arbitrary pole placement, optimal feedback design
\end{IEEEkeywords}
%%%%%%%%%%%%%%%%%%%%%%%%%%%%%%%%%%%%%%%%%
\section{Introduction}\label{sec:intro}
Complex networks and cyber-physical systems have applications in a wide variety of areas including multi-agent networks, power networks, social communication networks, biological networks, and distribution networks \cite{LiuSloBar:11}.  Most of these networks are well represented as linear time-invariant (LTI) dynamical systems.    In LTI systems with output feedback, the feedback matrix decides which output to be fed as feedback to which input and what control actions to be taken. Feedback selection for decentralized control in LTI systems is a fundamental problem in control theory.  Feedback selection aims at  designing a feedback matrix such that the closed-loop system satisfies arbitrary pole placement property and thus guarantees any desired closed-loop performance.

  Complex networks often  consist of interconnected components  with spatially distributed actuators and sensors. Establishing  feedback connections among spatially distributed actuators and sensors that are resilient to failure and attacks is difficult.   In many complex networks, including power networks and distribution networks, some of the feedback links become dysfunctional over time due to the vulnerability of the actuation, sensing and feedback mechanism. Additionally, many times there are targeted disruptive attacks by adversaries which tampers the structure of the network. Since the structure of the network is endogenous in nature, these changes affect the properties of the network, and the properties affect the system's performance.  In order to guarantee any desired performance of the closed-loop system, it is essential that the  feedback matrix is robust/resilient to disruptive scenarios such as natural failures or attacks by skilled and intelligent, adversarial  agents \cite{CarAmiSas:08}, \cite{iliXieKhaMou:10}. 
 
 Moreover, real-world networks are of large system dimension and complex graph pattern, and hence  most of the entries of the system matrices are not known precisely.  {\em Structural analysis} is a framework that is used to analyze the properties of LTI systems when only the sparsity patterns of the system matrices are known \cite{Rei:88}. Structural analysis performs control theoretic analysis of systems using the sparsity pattern, i.e., the zero/non-zero pattern, of the system matrices. 
The strength of structural analysis is that most of the structural properties, like  structural controllability, structural observability, and pole placement, of structured systems, are `generic' in nature \cite{Rei:88}, \cite{ComDioWou:02}. Hence, if the sparsity pattern of a system satisfies these properties, then `almost all' systems with the same sparsity pattern satisfy the analogous control-theoretic properties.
There are graph-theoretic conditions to verify the control-theoretic properties of the structured system. However, there are no known criteria to verify resilience of a system or design resilient systems. 

Often cyber-physical systems like power networks and water distribution networks undergo failure of interaction links in the system matrices due to aging and/or attacks by intelligent adversaries that tamper the structure of the system. Verification and design of resilient feedback matrix are critical to guarantee the desired operating condition of the system during such adversarial situations. Developing computationally efficient algorithms to verify and design resiliency of complex cyber-physical systems is the key focus of this paper.

In this paper,  we consider the resilience of the closed-loop structured system towards maintaining arbitrary pole placement property and focus on two problems.
Given a  structured system with state, input, output,  and  feedback matrices, we first aim to verify whether the closed-loop system is resilient to simultaneous failure of {\em any} $\gamma$ feedback links. The set of feedback links that undergo failure can be any arbitrary set of cardinality at most $\gamma$, since in real-world systems the connections that undergo attacks or failure is unknown {\em a priori}.  At present, there is no computationally efficient algorithm  to verify resilience of a closed-loop structured system when {\em any} subset of feedback links with cardinality bounded by a specified number can fail.  The exhaustive search-based algorithm requires verifying the arbitrary pole placement property for failure of all possible combinations of feedback links of cardinality $\gamma$ or less, which is exponential number of cases.  Then, we address the associated design problem, in which we need to design a sparsest feedback matrix that ensures the resilience of the closed-loop structured system to simultaneous failure of any subset of feedback links of cardinality at most  $\gamma$.  The key contributions of this paper are as follows:
\begin{enumerate}
\item[$\bullet$] We prove that, given structured state, input, output, and  feedback matrices, verifying resilience of the closed-loop structured system towards maintaining arbitrary pole placement property subject  to failure of any subset of feedback links whose cardinality is at most $\gamma$ is NP-complete  (Theorem~\ref{th:NP}). We prove that even for irreducible\footnote{A directed graph is said to be irreducible, if there exists a directed path between any two arbitrary vertices of it.} systems, verifying resilience of the closed-loop structured system subject to failure of any subset of feedback links whose cardinality is at most $\gamma$ is NP-complete (Corollary~\ref{cor:NP_irreducible}). %\AK{We also claim that verification problem is easy(or change word) when  system has perfect matching.}

\item[$\bullet$] We prove that, given structured state, input, and output matrices, designing a sparsest feedback matrix such that the resulting closed-loop system is resilient to failure of any subset of feedback links of cardinality at most $\gamma$ is NP-hard (Theorem~\ref{th:prob1_NP}).  We also show  that the design problem is inapproximable to factor $(1-o(1))\log\,n$, where $n$ denotes the system dimension (Theorem~\ref{th:approxi_NP_sol}).  We show that the NP-hardness and the inapproximability results of the design problem hold even for a widely practical subclass of systems, known as {\em structurally cyclic} systems, the class of systems in which all state nodes are spanned by a disjoint set of cycles\footnote{In a directed graph, a cycle is a directed closed walk with no repetitions of vertices and edges, except the starting and ending vertex.}.

\item[$\bullet$] We provide a polynomial-time approximation algorithm of approximation factor $O(\log\,n)$  for the sparsest resilient feedback design problem (Theorem~\ref{th:th_multi_approx}) for  structurally cyclic systems with a  special feedback structure, so-called {\em back-edge} feedback structure.  We show that the design problem is NP-hard and inapproximable to factor $(1-o(1))\log\,n$ for this class of systems, and hence the algorithm is an an order-optimal polynomial-time approximation algorithm.
\item[$\bullet$] We present polynomial time algorithms to verify the resilience of feedback matrix for $\gamma=1$ and $\gamma=2$ (Algorithms~\ref{alg:k=1} and~\ref{alg:k=2}), and prove the correctness and complexity of the algorithms (Theorems~\ref{th:k=1} and~\ref{th:k=2}).
 Then we extend these algorithms for one edge ($\gamma=1$) failure and two edges ($\gamma=2$) failures to a general case and prove its correctness and show that the complexity is pseudo-polynomial  with factor $\gamma$ (Theorem~\ref{th:algo_k}). Our algorithm performs computationally much better than exhaustive search-based algorithm and is computationally more efficient for small values of $\gamma$.
\end{enumerate}

%(ADD DETAILS (IF NEEDED) ABOUT SIMULTANEOUS FAILURE, EACH LINK FAILURE IS PROB P) 
%Considering the vulnerability of the network, each link can fail with some probability, say $\pazocal{P}$. Thus probability  of simultaneous failure of $\gamma$ links, $\pazocal{P}^\gamma$, is a very small value for large $\gamma$. In adversary attacks, too many links cannot be attacked simultaneously with limited resources. As a result, the number of  simultaneous link failures, i.e., $\gamma$, is usually a small number.
%%%%%%%%%%%%%%%%%%%%%%%%%%%%%%%%%%%%%%%%%%%
The organization of this paper is as follows: Section~\ref{sec:prob} presents the formal description of  feedback resilience
verification problem and sparsest resilient feedback design problem. Section~\ref{sec:graph} discusses notations, few  preliminaries and some existing results used in the sequel. Section~\ref{sec:NP} analyzes the complexity of both problems and proves NP-completeness of feedback resilience
verification problem and NP-hardness of sparsest resilient feedback design problem. Section~\ref{sec:algo_design} presents an approximation algorithm for solving the sparsest resilient feedback design problem for structurally cyclic systems with a special feedback structure. Section~\ref{sec:algo_verify} presents a pseudo-polynomial  algorithm for solving the feedback resilience  verification problem for irreducible systems. Section~\ref{sec:conclu} gives the final concluding remarks.

%%%%%%%%%%%%%%%%%%%%%%%%%%%%%%%%%%%%%%%%%%%%%%%
\subsection{Related Work}\label{sec:rel}
  Resilience or robustness of complex networks subject to structural perturbations is of interest for a long time \cite{PicSezSil:81}. For instance, the robustness of  structured systems towards maintaining  structural controllability is addressed in \cite{RahAgh:13} by characterizing the role of nodes and links of the network.    Classification of sensors based on their importance in the network for structural observability under sensor failures is done in \cite{ComDioDoTri:09}. Papers \cite{WanGaoGaoDen:13}, \cite{RutRut:13} define indices for measuring the level of resilience of the network towards maintaining structural controllability. Robustness of a power grid towards maintaining structural controllability under $\gamma$ link failures is addressed in \cite{RamPeqAguKar:15}.  

Resilience of networks is addressed in  \cite{PasDorBul:13} by studying the various kinds of attacks, monitoring issues that can possibly lead to malfunctioning of the network, and attack detection mechanisms.  
Paper \cite{ComDio:07} addressed the optimal selection problem with the minimal placement of additional sensors  and among them those with minimal cost for structural observability. 
The complexity of the robust minimal controllability problem, where the goal is to determine a minimal subset of state variables to be actuated to ensure structural controllability
under additional constraints  is addressed in \cite{PeqRamSouPed:17}. Paper \cite{LiuMoPeqSinKarAgu:13} consider the minimum sensor placement problem when the sensors are subject to one sensor failure. Note that, the papers discussed above (\cite{RahAgh:13}-\cite{LiuMoPeqSinKarAgu:13}) address i/o selection for resilience towards maintaining structural controllability/observability and this paper focus on verification and design of resilient feedback matrix for arbitrary pole placement of the closed-loop poles.

Optimal cost feedback selection for LTI systems is addressed in literature for various instances (see \cite{PeqKarPap:15}, \cite{MooChaBel-018_line}, \cite{MooChaBel-018_erratum} and references therein). However,  papers \cite{PeqKarPap:15},  \cite{MooChaBel-018_line}, and \cite{MooChaBel-018_erratum} deal with optimal design and do not consider failure or malfunctioning of the links. The computational complexity of verifying that the closed-loop system has no structurally fixed modes (SFMs) is polynomial  when the feedback links are not subjected to failures  \cite{PicSezSil:84}. {\em In this paper, we show that verification of the no-SFM condition is NP-complete when the feedback links are subject to failure} (exhaustive search-based technique has  complexity exponential in the number of states of the system).  There has been some effort on the resilience of feedback  matrix.   Designing minimum cost resilient actuation-sensing-communication for
{\em regular descriptor systems}  while ensuring selective strong structural system's properties is addressed in \cite{PopPeqKarAguIli:17_arx}. The pairing of sensors and actuators to design a feedback pattern that is resilient to edge failures is assessed in  \cite{PeqKhoKriPap:17_arx}. The approach in  \cite{PeqKhoKriPap:17_arx} uses the notion of {\em resilient fixed modes} and gave conditions to verify non-existence of  resilient fixed modes when the subset of feedback links that can be compromised is specified. This paper, on the other hand, deals with the resilience of feedback matrix towards maintaining arbitrary pole placement when {\em any} subset of feedback links of cardinality at most $\gamma$ can fail. During an attack or failure, the subset of  feedback links  that can be compromised is arbitrary and may not be from a specified subset. Hence  algorithms to verify the resilience of a feedback matrix and algorithms to design resilient feedback matrix  that can handle the failure of any arbitrary subset of feedback links is important. This paper addresses these problems.
%%%%%%%%%%%%%%%%%%%%%%%%%%%%%%%%%%%%%%%%%%

\section{ Problem Formulation}\label{sec:prob}
Consider an LTI dynamical system  $
\dot{x}(t)= Ax(t) + Bu(t)$,
$y(t)  = Cx(t)$, 
where the state matrix $A \in \R^{n \times n}$,  the input matrix  $B \in \R^{n \times m}$, and the output matrix $C \in \R^{p \times n}$. Here $\R$ denotes the set of real numbers. Consider  {\em structured matrices} $\bA \in \{\*, 0\}^{n \times n}$, $\bB \in \{\*, 0\}^{n \times m}$ and $\bC \in \{\*, 0\}^{p \times n}$. Here, $0$ denotes fixed zero entries and $\*$ denotes indeterminate free parameters. The tuple $(\bA, \bB, \bC)$ is said to be the {\it structured system} representation of the {\it numerical system} $(A$, $B$, $C)$ if it satisfies Eq.~\eqref{eq:struc} given below.
\begin{eqnarray}\label{eq:struc}
A_{ij} &=& 0 \mbox{~whenever~} \bA_{ij} = 0, \mbox{~and} \nonumber \\
B_{ij} &=& 0 \mbox{~whenever~} \bB_{ij} = 0, \mbox{~and} \nonumber \\
C_{ij} &=& 0 \mbox{~whenever~}\bC_{ij} = 0.
\end{eqnarray}
Here, $(\bA, \bB, \bC)$ represents a class of numerical systems that satisfy Eq.~\eqref{eq:struc}.  Let $\bK \in \{\*,0\}^{m \times p}$ denotes the structured feedback matrix, where $\bK_{ij} = \*$ if the $j^{\rm th}$ output is fed to the $i^{\rm th}$ input as feedback. For the structured feedback matrix $\bK$, the closed-loop structured system is denoted by $(\bA, \bB, \bC, \bK)$. The concept of fixed modes for structured systems is introduced and a necessary and sufficient graph-theoretic condition for checking the existence of {\em structurally fixed modes} (SFMs) is given in \cite{PicSezSil:84}. Let $[K]:= \{K:K_{ij}=0$, if $\bar{K}_{ij}=0\}$. Now we define structurally fixed modes.
\begin{definition}\cite{Rei:88}\label{def:sfm}
The structured system $(\bA, \bB, \bC)$ is said  to have no SFMs with respect to an information pattern $\bK$ if there exists  one numerical realization $(A, B, C)$ of $(\bA, \bB, \bC)$ such that $\cap_{K \in [K]} \sigma(A + BKC) = \emptyset$, where the function $\sigma(T)$ denotes the set of eigenvalues of a square matrix $T$.
\end{definition}
In this paper, we use the no-SFMs criteria to ensure arbitrary pole placement \cite{PicSezSil:84}, as no-SFMs criteria and the ability for arbitrary pole placement are equivalent when  controllers are dynamic \cite[Theorem~4.3.5]{shana_thesis}. We consider two problems in this paper, specifically in the context of the resilience of the closed-loop system towards achieving arbitrary pole placement property, which are described below.

\subsection{Feedback Resilience Verification Problem}
Consider a structured system $(\bA, \bB, \bC)$ and a structured feedback matrix $\bK$  such that the closed-loop system $(\bA, \bB, \bC, \bK)$ has no SFMs. Let  $\I \subset \{1,\ldots,m\} \times \{1,\ldots,p\}$ be a subset consisting of indices of entries of $\bK$. More precisely, $\I \subset \{(i,j): i\in\{1,\ldots,m\} \mbox{~and~} j \in \{1,\ldots, p\}\}$. Define $\bK^\I \in \{\*,0\}^{m \times p}$, where
\begin{equation}\label{eq:Ki}
{\bK^{\I}}_{ij} := \begin{cases} \bK_{ij}, \mbox{~if~} (i,j) \notin \I,\\
0,~~~ \mbox{~if~} (i,j) \in \I.
\end{cases}
\end{equation}
Now we formulate the first problem considered in this paper.
\begin{problem}[Feedback resilience verification problem]\label{prob:verification}
Consider a closed-loop structured system $(\bA, \bB, \bC, \bK)$ with no SFMs and a positive number $\gamma$. Then, verify if the structured system  $(\bA, \bB, \bC, \bK^{\I})$  guarantees no-SFMs criteria for \underline{any}  set $\I$, where $\I \subset \{(i,j): i\in\{1,\ldots,m\} \mbox{~and~} j \in \{1,\ldots, p\}\}$ such that $|\I| \leqslant \gamma$.
\end{problem}
For a given closed-loop system, the feedback resilience verification problem verifies if the system is resilient to failure of any set of feedback links with cardinality at most $\gamma$. Note that, the  set of feedback links that undergo failure or attack is not pre-specified and can be any arbitrary set. There are no known results to solve this problem.

\subsection{Sparsest Resilient Feedback Design Problem}
Now we describe the design problem associated with the resilience of the feedback matrix. The objective is to design a feedback matrix  $\bK$ such that for $\I \subset \{1,\ldots,m\} \times \{1,\ldots,p\}$, ${\bK^{\I}}$ defined in Eq.~\eqref{eq:Ki} satisfies no-SFMs criteria of $(\bA, \bB, \bC, {\bK^{\I}})$.

Let $\K_\gamma := \{\bK \in \{\*, 0 \}^{m \times p} :$ structured system  $(\bA, \bB, \bC, \bK^{\sI})$  guarantees no-SFMs criteria for {\em any}  set $\I$, where $\I \subset \{(i,j): i\in\{1,\ldots,m\} \mbox{~and~} j \in \{1,\ldots, p\}\}$ such that $|\I| \leqslant \gamma \}$. The set $\K_\gamma$ thus consists of all feedback matrices for the structured system $(\bA, \bB, \bC)$ that satisfy no-SFMs criteria even after the failure of any $\gamma$ feedback links. Without loss of generality, assume that $\bK^f=\{\bK^f_{ij}=\*$, for all $i,j\}$ lies in $\K_\gamma$, otherwise, no feasible solution to the problem. Thus $\K_\gamma$ is non-empty. Our objective here is to design a sparsest feedback matrix that  lies in $\K_\gamma$.

\begin{problem}[Sparsest resilient feedback  design problem]\label{prob:design}
Consider a structured system $(\bA, \bB, \bC)$ and a positive number $\gamma$. Then, find 
\[ \bK^\* ~\in~ \arg\min_{\bK \in \K_\gamma} \norm[\bK]_0, \]
\end{problem}
\noindent where $\K_\gamma := \{\bK \in \{0,\*\}^{m \times p} :$ structured system  $(\bA, \bB, \bC, \bK^{\sI})$  guarantees no-SFMs criteria for {\em any}  set $\I$, where $\I \subset \{(i,j): i\in\{1,\ldots,m\} \mbox{~and~} j \in \{1,\ldots, p\}\}$ such that $|\I| \leqslant \gamma \}$. Here $\norm[\cdot]_0$ denotes the zero matrix 
   norm\footnote{Although $\norm[\cdot]_0$ does not satisfy all 
   the norm axioms, the number of non-zero entries in a matrix is 
   conventionally referred to as the {\em zero norm}.}.

In the next section, we present few notations and existing results used in the sequel.

%%%%%%%%%%%%%%%%%%%%%%%%%%%%%%%%%%%%%%%%%%%%%%%%%%%%%%%%%%%%%%5
\section{Notations, Preliminaries and Existing Results}\label{sec:graph}
For understanding the graph-theoretic condition given in \cite{PicSezSil:84} that characterizes the no-SFMs criteria, we define few notations and constructions. Firstly, the {\em state  digraph} denoted by $\D(\bA) := (V_{\x}, E_{\x})$ is constructed as follows: here $V_{\x} = \{x_1, \ldots, x_n \}$ and $(x_j, x_i) \in E_{\x}$ if $\bA_{ij} = \*$. Subsequently, the {\em closed-loop system digraph} $\D(\bA, \bB, \bC, \bK) := (V_{\x}\cup V_{\u}\cup V_{Y}, E_{\x}\cup E_{\u}\cup E_{Y}\cup E_{K})$ is constructed, where $V_\u = \{u_1, \ldots, u_m\}$ and $V_Y = \{y_1, \ldots, y_p\}$. Further, $(u_j, x_i) \in E_{U}$ if $\bB_{ij} =\*$, $(x_j, y_i) \in E_{Y}$ if $\bC_{ij} = \*$ and $(y_j, u_i) \in E_K$ if $\bK_{ij} = \*$.  The digraph $\D(\bA, \bB, \bC, \bK)$ captures the effects of states, inputs, outputs and feedback connections in the system.

A digraph $\D = (V_D, E_D)$ is said to be {\em strongly connected} if for each ordered pair of vertices $v_i,v_j \in V_D$
there exists a directed path from $v_i$ to $v_j$. Further, a subgraph of digraph $\D$ denoted by $\D_S = (V_S, E_S)$ is a digraph such that $V_S \subset V_D$, $E_S \subset E_D$, and the edge set $E_S$ has endpoints from $V_S$ which is same as in $E_D$. A maximal strongly connected subgraph is a subgraph that is strongly connected and is not properly contained in any other subgraph that is strongly connected. 

\begin{definition}\cite{CorLeiRivSte:01} \label{def:SCC}
A strongly connected component (SCC) is a {\it maximal} strongly connected subgraph $\D_S = (V_S , E_S )$ of $\D$.
\end{definition}

Using $\D(\bA, \bB, \bC, \bK)$ and Definition~\ref{def:SCC} the following holds.

\begin{proposition} \cite[Theorem 4]{PicSezSil:84}\label{prop:SFM1} 
A structured system $(\bA, \bB, \bC)$ has no SFMs with respect to a feedback matrix $\bK$ if and only if the following conditions hold:\\
\noindent a)~in the digraph $\D(\bA, \bB, \bC, \bK)$, each state node $x_i$ is contained in an SCC which includes an edge in $E_K$, and \\
\noindent b)~there exists a finite node disjoint union of cycles $\C_g = (V_g, E_g)$ in $\D(\bA, \bB, \bC, \bK)$, where $g$ 
is a positive integer such that $V_X \subset \cup_{g}V_g$.
\end{proposition}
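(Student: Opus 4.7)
The plan is to prove the stated equivalence by establishing the two directions separately, using the graph-theoretic expansion of $\det(\lambda I - (A+BKC))$ as a signed sum over disjoint cycle families in the closed-loop digraph $\D(\bA,\bB,\bC,\bK)$ as the central tool. Throughout, I would work with one generic numerical realization at a time and use that the property ``$\lambda_0$ is an eigenvalue of $A+BKC$ for all $K\in[K]$'' cuts out an algebraic subvariety of the parameter space.

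For the necessity direction (no SFMs $\Rightarrow$ (a) and (b)), I would argue by contrapositive. First, suppose condition (b) fails, so no node-disjoint union of cycles in $\D(\bA,\bB,\bC,\bK)$ covers $V_\x$. Because every non-zero term in the Leibniz-style expansion of $\det(A+BKC)$ corresponds bijectively to a disjoint cycle family spanning all state vertices of the system digraph (with the feedback loops treated as $\u$--$Y$--$\u$ bypass cycles of length three), every such term vanishes structurally, giving $\det(A+BKC)\equiv 0$ for every realization and every $K\in[K]$. Consequently $0$ is a fixed eigenvalue, hence an SFM. Next, suppose condition (a) fails and some state $x_i$ lies in an SCC $\S_0$ of $\D(\bA,\bB,\bC,\bK)$ containing no edge of $E_K$. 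Then the states in $\S_0$ evolve according to a dynamical sub-block whose characteristic polynomial depends only on entries of $A$ restricted to $\S_0$ and is independent of $K$; any of its roots is an SFM.

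For the sufficiency direction ((a) and (b) $\Rightarrow$ no SFMs), I would exhibit a single realization for which $\cap_{K\in[K]}\sigma(A+BKC)=\emptyset$. Condition (b) guarantees that the closed-loop characteristic polynomial $p(\lambda,K):=\det(\lambda I - A - BKC)$, viewed as a polynomial in $\lambda$ and in the free entries of $A,B,C,K$, is non-degenerate in the sense that its coefficients depend non-trivially on the parameters. Condition (a) ensures that every state belongs to an SCC touched by a feedback edge, so each SCC of the closed-loop digraph is individually influenced by $K$. For any candidate fixed mode $\lambda_0\in\mathbb{C}$, the set of parameter tuples satisfying $p(\lambda_0,K)=0$ for all $K\in[K]$ is then a proper algebraic subvariety of the parameter space, and generic choices of free entries lie outside it.

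The hard part will be the sufficiency argument, specifically ensuring that the generic-parameter construction rules out \emph{every} candidate $\lambda_0$ simultaneously rather than one at a time. The route I would pursue is a decomposition along the SCCs of $\D(\bA,\bB,\bC,\bK)$: condition (a) localises at least one feedback edge inside each SCC, condition (b) provides a spanning disjoint cycle family that restricts to a cycle family on each SCC, and on each SCC a standard arbitrary-pole-assignability result for dynamic output feedback (Kimura / Wang--Davison style) can be invoked. The delicate step is patching the per-SCC pole assignments without letting cross-SCC edges reintroduce structurally fixed modes; this I would handle by exploiting the upper-triangular block structure induced by the condensation DAG of $\D(\bA,\bB,\bC,\bK)$, treating SCCs in reverse topological order so that lower blocks' assignments are not disturbed by already-fixed upper-block parameters.
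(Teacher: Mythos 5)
First, a framing point: the paper does not prove Proposition~\ref{prop:SFM1} at all --- it is imported verbatim as Theorem~4 of \cite{PicSezSil:84} and used as a black box --- so there is no in-paper argument to compare yours against, and I can only assess your sketch on its own terms. Your necessity direction follows the standard route and is sound in outline, but two steps need repair. For the failure of condition~a), the claim that a feedback-free SCC yields a diagonal block ``independent of $K$'' requires the observation that a structurally nonzero entry $B_{ki}K_{ij}C_{jl}$ with $x_k,x_l$ in the same SCC forces $y_j$, $u_i$, and the feedback edge $(y_j,u_i)$ into that very SCC; only then does block-triangularization along the condensation DAG give a $K$-independent diagonal block. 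For the failure of condition~b), the asserted bijection between nonzero terms of $\det(A+BKC)$ and node-disjoint cycle families is not literally correct: two factors $B_{k_1i}K_{ij}C_{jl_1}$ and $B_{k_2i}K_{ij}C_{jl_2}$ occurring in a single determinant term reuse the vertices $u_i,y_j$, so the cycles they induce are not node-disjoint. The clean fix is to work with the bordered system matrix whose Schur complement is $A+BKC$, equivalently with the bipartite graph $\B(\bA,\bB,\bC,\bK)$ of Proposition~\ref{prop:SFM2} and its $\E_{\mathbb{U}},\E_{\mathbb{Y}}$ edges, for which the term-by-term correspondence with disjoint cycle families is exact. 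Since you only need ``no spanning disjoint cycle family $\Rightarrow$ every term vanishes,'' this is repairable, but the bijection as stated is false.

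The sufficiency direction contains the genuine gap. Invoking a Kimura or Wang--Davison pole-assignability theorem on each SCC is the wrong tool: those results need dimension conditions (e.g.\ $m+p>n$) that an arbitrary SCC will not satisfy, and arbitrary pole placement by static feedback is a far stronger conclusion than the absence of fixed modes, so the per-SCC step you propose would simply fail on most instances. What closes the argument instead is (i)~the algebraic fact that for any realization the fixed modes of $(A,B,C,[K])$ are contained in $\sigma(A)$ (take $K=0$ in the intersection), so only finitely many candidate values $\lambda_0$ must be excluded, and (ii)~for each such candidate, conditions~a) and~b) guarantee that $\det(\lambda_0 I - A - BKC)$ is not identically zero as a polynomial in the free parameters, so a generic realization lies outside the finite union of proper subvarieties. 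Your stated worry about ruling out every $\lambda_0 \in \mathbb{C}$ simultaneously dissolves once the candidates are restricted to $\sigma(A)$; without that restriction the subvariety argument does not close, because the exceptional set a priori varies with $\lambda_0$. The condensation-ordered treatment of SCCs is the right combinatorial skeleton, but it must carry a genericity/rank argument rather than a pole-assignment theorem.
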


Condition~a) in Proposition~\ref{prop:SFM1} can be verified by finding all SCCs in $\D(\bA, \bB, \bC, \bK)$ and then checking if all of them have at least one feedback edge. Finding SCCs in a digraph $\D = (V_D, E_D)$ has complexity $O(|V_D|+|E_D|)$ \cite{Die:00}. Here, $m=O(n)$ and $p=O(n)$. Thus, the number of vertices and edges in $\D(\bA, \bB, \bC, \bK)$ are $O(n)$ and $O(n^2)$, respectively. As a result, condition~a) can be checked using $O(n^2)$ operations. There exists a graph-theoretic condition using the concept of information path for checking condition~b) in Proposition~\ref{prop:SFM1} in $O(n^{2.5})$ operations \cite{PapTsi:84}. In this paper, we use the bipartite graph matching condition given in \cite{MooChaBel:17_Automatica}. We now define  bipartite graphs   and then  give the bipartite matching condition to verify condition~b) in Proposition~\ref{prop:SFM1}.

A bipartite graph denoted by $G = (V, \widetilde{V}, \E)$, $|V| \leqslant |\tV|$, is a graph that satisfies $V \cap \widetilde{V} = \emptyset$ and $\E \subseteq V \times \widetilde{V}$. In $G$, a {\em matching} $M \subseteq \E$ is a collection of edges such that no two edges have a common endpoint and a {\em perfect matching} is a matching whose cardinality is $|V|$. Further, let $c: \E \rightarrow \R$ be a cost function. Then, a minimum cost perfect matching $M^\*$ is a perfect matching in $G$ such that $\sum_{e \in M^\*}c(e) \leqslant \sum_{e \in \widetilde{M}} c(e)$, where $\widetilde{M}$ is any perfect matching in $G$. Finding a minimum cost perfect matching in a bipartite graph has computational complexity $O(|V|^{2.5})$ \cite{Die:00}.

For a closed-loop structured system $(\bA,\bB,\bC,\bK)$, we construct a bipartite graph denoted by $\B(\bA,\bB,\bC,\bK)$.
Define  $\B(\bA,\bB,\bC,\bK):=(V_{X'}\cup V_{U'}\cup V_{Y'},V_{X}\cup V_{U}\cup V_{Y},\E_{X}\cup \E_{U}\cup \E_{Y} \cup \E_{K} \cup \E_{\mathbb{U}}\cup \E_{\mathbb{Y}})$, where $V_{X'}=\{x'_1,\dots,x'_n\}$, $V_{U'}=\{u'_1,\dots,u'_m\}$, $V_{Y'}=\{y'_1,\dots,y'_p\}$, $V_X=\{x_1,\dots,x_n\}$, $V_{U}=\{u_1,\dots,u_m\}$ and $V_{Y}=\{y_1,\dots,y_p\}$. Also, $(x'_j,x_i)\in \E_X \Leftrightarrow (x_i,x_j)\in {E}_X$, $(x'_i,u_j)\in \E_U \Leftrightarrow (u_j,x_i)\in E_U$, $(y'_j,x_i)\in \E_Y \Leftrightarrow (x_i,y_j)\in E_Y$ and $(u'_i,y_j)\in \E_K \Leftrightarrow (y_j,u_i)\in E_K$. Moreover, $\E_{\mathbb{U}}$ includes edges $(u'_i,u_i)$, for $i=1,\dots,m$ and $\E_{\mathbb{Y}}$ includes edges $(y'_i,y_i)$, for $i=1,\dots,p$. The bipartite graph  $\B(\bA,\bB,\bC,\bK)$ is referred as the {\em closed-loop system bipartite graph}. Using $\B(\bA,\bB,\bC,\bK)$, the following result holds.

\begin{proposition} \cite[Theorem~3]{MooChaBel:17_Automatica}, \label{prop:SFM2} 
Consider a closed-loop structured system $(\bA, \bB, \bC, \bK)$. Then, the bipartite graph $\B(\bA, \bB, \bC, \bK)$ has a perfect matching if and only if all state nodes are spanned by disjoint union of cycles in $\D(\bA, \bB, \bC, \bK)$.
\end{proposition}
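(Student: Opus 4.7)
My plan is to interpret a perfect matching in $\B(\bA,\bB,\bC,\bK)$ as a permutation $\pi$ on the vertex set $V_X\cup V_U\cup V_Y$ and to identify its orbits with cycles in $\D(\bA,\bB,\bC,\bK)$, while treating the self-edge families $\E_{\mathbb{U}}$ and $\E_{\mathbb{Y}}$ as ``slack'' that absorbs input and output vertices lying on no cycle. This is the standard equivalence between bipartite perfect matchings and vertex-disjoint cycle covers of a functional digraph, specialized to the asymmetric construction of $\B$.

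For the ``if'' direction, assume $V_X$ is covered by a disjoint collection of cycles in $\D(\bA,\bB,\bC,\bK)$. For every arc $v \to w$ appearing on one of these cycles, the defining biconditionals of $\E_X,\E_U,\E_Y,\E_K$ supply a corresponding bipartite edge $(w',v)$, and I would pick exactly these edges. Cycle-disjointness ensures that each right vertex $v$ on some cycle is matched once (to $w'$, where $w$ is the successor of $v$ on that cycle) and each left vertex $w'$ on some cycle is matched once (to $v$, where $v$ is the predecessor of $w$). For every input index $i$ whose vertex $u_i$ lies on no chosen cycle, and every such output index $j$, I would then add the self-edges $(u'_i,u_i) \in \E_{\mathbb{U}}$ and $(y'_j,y_j) \in \E_{\mathbb{Y}}$, which exist by construction for all $i,j$. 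All state vertices are saturated by hypothesis, so the union is a perfect matching.

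For the ``only if'' direction, let $M$ be a perfect matching and define $\pi(w):=v$ whenever $(w',v) \in M$; since the left and right sides of $\B$ are in natural bijection via $v \leftrightarrow v'$ and $M$ saturates both sides, $\pi$ is a permutation of $V_X\cup V_U\cup V_Y$. Every edge of $M$ is either (i) drawn from $\E_X\cup\E_U\cup\E_Y\cup\E_K$, in which case $(v,w)$ is a genuine arc of $\D(\bA,\bB,\bC,\bK)$, or (ii) a self-edge of $\E_{\mathbb{U}}\cup\E_{\mathbb{Y}}$, in which case $w=v$ is an input or output vertex and $\pi$ has a fixed point there. The orbits of $\pi$ partition $V_X\cup V_U\cup V_Y$ into vertex-disjoint sets, and the critical observation is that no state vertex $x_i$ can be fixed by $\pi$: $\B$ has no edge $(x'_i,x_i)$ unless $(x_i,x_i) \in E_X$, which is itself a length-one directed cycle of $\D$. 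Consequently, the orbit of every state vertex traces a genuine directed cycle of $\D(\bA,\bB,\bC,\bK)$, and disjointness of orbits yields the required cycle cover of $V_X$.

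The main place to be careful is this asymmetric role of state versus input/output vertices: the extra families $\E_{\mathbb{U}},\E_{\mathbb{Y}}$ are included precisely so that non-cycle input/output vertices can always be matched trivially, while the deliberate absence of analogous self-edges on the state side is exactly what forces every $x_i$ onto a real cycle of $\D$. Beyond this bookkeeping, both implications come down to turning matching edges of $\B$ into predecessor-relations on $\D$, and conversely.
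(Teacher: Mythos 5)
The paper does not actually prove Proposition~3.2 --- it is imported verbatim from the cited reference \cite{MooChaBel:17_Automatica} --- so there is no in-paper argument to compare against. Your proof is correct and is the standard matching-versus-cycle-cover correspondence underlying that cited result: the biconditionals defining $\E_X,\E_U,\E_Y,\E_K$ make a bipartite edge $(w',v)$ synonymous with an arc $v\to w$ of $\D(\bA,\bB,\bC,\bK)$, and you correctly isolate the one non-trivial point, namely that the self-edge families $\E_{\mathbb{U}},\E_{\mathbb{Y}}$ let off-cycle input/output vertices be saturated trivially, while the absence of analogous state self-edges (except when $(x_i,x_i)\in E_X$, a genuine length-one cycle) forces every state vertex's orbit under the induced permutation to trace a true directed cycle.
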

An illustrative example demonstrating the construction of $\D(\bA, \bB, \bC, \bK)$ and $\B(\bA, \bB, \bC, \bK)$ for a structured system $(\bA, \bB, \bC, \bK)$  is given in Figure~\ref{fig:eg1}. Finding perfect matching has computational complexity $O(n^{2.5})$ \cite{Die:00}. Using Propositions~\ref{prop:SFM1} and~\ref{prop:SFM2}, one can verify if a structured system satisfies the no-SFMs criteria in $O(n^{2.5})$ computations. 

Note that, while Proposition~\ref{prop:SFM1} gives a polynomial-time graph-theoretic condition to verify the no-SFMs criteria, our objective is (i)~to verify if the structured system continues to satisfy  the no-SFMs condition even after the failure of {\em any} subset of feedback links with cardinality at most $\gamma$ and (ii)~to design a feedback matrix that guarantees the no-SFM criteria even after the failure of {\em any} subset of feedback links of cardinality at most $\gamma$.  In the next section, we  analyze the complexity of these two problems. 
\begin{figure}[t]
\centering
\begin{subfigure}[b]{0.45\textwidth}
\begin{equation*} \label{eq:sysmatrices}
\bA = 
\left[\scalebox{1.2}{\mbox{$
\begin{smallmatrix}
0 & 0 & \* & 0 & 0 & 0 & 0\\
0 & 0 & \* & 0 & 0 & 0 & 0\\
\* & \* & 0 & \* & \* &  0 & 0\\
0 & 0 & \* & 0 & 0 & 0 & 0\\
0 & 0 & \* & 0 & 0 & 0 & 0\\
0 & 0 & 0 & \* & 0 & 0 & \*\\
0 & 0 & 0 & 0 & 0 & \* &  0
\end{smallmatrix}
$}}
\right],~
\bB = 
\left[\scalebox{1.2}{\mbox{$
\begin{smallmatrix}
\* & 0  \\
0 & 0  \\
0 & 0  \\
0 & 0  \\
0 & 0  \\
0 & \*\\
0 & 0
\end{smallmatrix}
$}}
\right]
\end{equation*}
\begin{equation*}
\bC = 
\left[\scalebox{1.2}{\mbox{$
\begin{smallmatrix}
0 & 0 & 0 & 0 & \* & 0 & 0\\
0 & 0 & 0 & 0 & 0 & 0 & \*
\end{smallmatrix}
$}}
\right], ~
\linebreak
\bK = 
\left[\scalebox{1.2}{\mbox{$
\begin{smallmatrix}
\* & 0  \\
0 &  \*
\end{smallmatrix}
$}}
\right]
\end{equation*}
\hspace*{10 mm}
\begin{tikzpicture}[scale = 0.9, ->,>=stealth',shorten >=1pt,auto,node distance=1.85cm, main node/.style={circle,draw,font=\scriptsize\bfseries}]
\definecolor{myblue}{RGB}{80,80,160}
\definecolor{almond}{rgb}{0.94, 0.87, 0.8}
\definecolor{bubblegum}{rgb}{0.99, 0.76, 0.8}
\definecolor{columbiablue}{rgb}{0.61, 0.87, 1.0}

  \fill[bubblegum] (5,1.0) circle (6.0 pt);
  \fill[columbiablue] (5.0,-1) circle (6.0 pt);
  \fill[columbiablue] (8.25,1) circle (6.0 pt);

  \fill[almond] (5.0,0) circle (6.0 pt);
  \fill[almond] (5.75,0) circle (6.0 pt);
  \fill[almond] (6.5,0) circle (6.0 pt);
  \fill[almond] (5.75,1.0) circle (6.0 pt);
  \fill[almond] (5.75,-1.0) circle (6.0 pt);
  
  \fill[almond] (7.5,0) circle (6.0 pt);
  \fill[bubblegum] (7.5,1) circle (6.0 pt);
  \fill[almond] (8.25,0) circle (6.0 pt);

   \node at (5,1.0) {\scriptsize $u_1$};
   \node at (7.5,1.0) {\scriptsize $u_2$}; 
   
   \node at (5,-1) {\scriptsize $y_1$}; 
   \node at (8.25,1) {\scriptsize $y_2$};   

  \node at (5,0) {\scriptsize $x_2$};
  \node at (5.75,0) {\scriptsize $x_3$};
  \node at (6.5,0) {\scriptsize $x_4$};
  \node at (5.75,1.0) {\scriptsize $x_{1}$};
  \node at (5.75,-1.0) {\scriptsize $x_{5}$};
  
  \node at (7.5,0) {\scriptsize $x_6$};
  \node at (8.25,0) {\scriptsize $x_7$};

\draw (5.5,-1.0)  ->   (5.2,-1.0);
\draw (5.25,1.0)  ->   (5.55,1.0);
\draw (6.7,0)  ->   (7.3,0);
\draw (7.5,0.8)  ->   (7.5,0.2);
\draw (8.25,0.2)  ->   (8.25,0.8);
\draw [red] (8.05,1)  ->   (7.65,1);   

\path[every node/.style={font=\sffamily\small}]
(7.5,0.25) edge[bend left = 40] node [left] {} (8.25,0.25)
(8.25,-0.25) edge[bend left = 40] node [left] {} (7.5,-0.25)

(5,0.25) edge[bend left = 40] node [left] {} (5.75,0.25)
(5.75,-0.25) edge[bend left = 40] node [left] {} (5,-0.25)
(5.75,0.25) edge[bend left = 40] node [left] {} (6.5,0.25)
(6.5,-0.25) edge[bend left = 40] node [left] {} (5.75,-0.25)
(5.75,0.25) edge[bend left = 40] node [left] {} (5.55,1.05)
(5.95,1.05) edge[bend left = 40] node [left] {} (5.75,0.25)

(5,-0.8) edge[red, bend left = 40] node [left] {} (5,0.8)

(5.75,-0.25) edge[bend left = 40] node [left] {} (5.95,-1.05)
(5.55,-1.05) edge[bend left = 40] node [left] {} (5.75,-0.25);
\end{tikzpicture}
\caption{$\D(\bA, \bB, \bC, \bK)$}
\label{fig:digraph}
\end{subfigure}~\hspace{0.2 mm}
\begin{subfigure}[b]{0.45\textwidth}
\centering
\definecolor{myblue}{RGB}{80,80,160}
\definecolor{mygreen}{RGB}{80,160,80}
\definecolor{myred}{RGB}{144, 12, 63}
\definecolor{myyellow}{RGB}{214, 137, 16}
\definecolor{aquamarine}{rgb}{0.5, 1.0, 0.83}
%\begin{figure} 
%\begin{center}
\begin{tikzpicture} [scale = 0.18]               
          \node at (12,-5) {\scriptsize $x'_1$};
          \node at (12,-7.5) {\scriptsize $x'_2$};
          \node at (12,-10) {\scriptsize $x'_3$};
          \node at (12,-12.5) {\scriptsize $x'_4$};
          \node at (12,-15) {\scriptsize $x'_5$};
          \node at (12,-17.5) {\scriptsize $x'_6$};
          \node at (12,-20) {\scriptsize $x'_7$};
          \node at (12,-22.5) {\scriptsize $u'_1$};
          \node at (12,-25) {\scriptsize $u'_2$}; 
          \node at (12,-27.5) {\scriptsize $y'_1$};
          \node at (12,-30) {\scriptsize $y'_2$};            
         
          \fill[myblue] (13.5,-5) circle (15.0 pt); 
          \fill[myblue] (13.5,-7.5) circle (15.0 pt);
          \fill[myblue] (13.5,-10) circle (15.0 pt);
          \fill[myblue] (13.5,-12.5) circle (15.0 pt);
          \fill[myblue] (13.5,-15) circle (15.0 pt);
          \fill[myblue] (13.5,-17.5) circle (15.0 pt);
          \fill[myblue] (13.5,-20) circle (15.0 pt); 
          \fill[myred] (13.5,-22.5) circle (15.0 pt);
          \fill[myred] (13.5,-25) circle (15.0 pt);
          \fill[mygreen] (13.5,-27.5) circle (15.0 pt);
          \fill[mygreen] (13.5,-30) circle (15.0 pt);  
                    
\draw (14,-7.5)  --   (29.5,-10);
\draw (14,-5)  --   (29.5,-10);
\draw (14,-10)  --   (29.5,-7.5);
\draw (14,-10)  --   (29.5,-5);
\draw (14,-10)  --   (29.5,-12.5);
\draw (14,-10)  --   (29.5,-15);
\draw (14,-12.5)  --   (29.5,-10);
\draw (14,-15)  --   (29.5,-10);
\draw (14,-17.5)  --   (29.5,-12.5);
\draw (14,-17.5)  --   (29.5,-20);
\draw (14,-20)  --   (29.5,-17.5);

\draw [mygreen] (14,-27.5)  --   (29.5,-15);
\draw [mygreen] (14,-30)  --   (29.5,-20);

\draw [myred] (14,-5)  --   (29.5,-22.5);

\draw [mygreen] (14,-27.5)  --   (29.5,-27.5);
\draw [mygreen] (14,-30)  --   (29.5,-30);
\draw [myred] (14,-22.5)  --   (29.5,-22.5);
\draw [myred] (14,-25)  --   (29.5,-25);
%
%\draw[myred] (13,-15)  --   (31,-15);

\draw[myred] (14,-17.5)  --   (29.5,-25);
\draw[dashed] (14,-22.5)  --   (29.5,-27.5);
\draw[dashed] (14,-25)  --   (29.5,-30);
%\draw[myred] (12.5,-20)  --   (29.5,-27.5);

          \node at (31.5,-5) {\scriptsize $x_1$};
          \node at (31.5,-7.5) {\scriptsize $x_2$};
          \node at (31.5,-10) {\scriptsize $x_3$};
          \node at (31.5,-12.5) {\scriptsize $x_4$};
          \node at (31.5,-15) {\scriptsize $x_5$};
          \node at (31.5,-17.5) {\scriptsize $x_6$};
          \node at (31.5,-20) {\scriptsize $x_7$};
          \node at (31.5,-22.5) {\scriptsize $u_1$};
          \node at (31.5,-25) {\scriptsize $u_2$};
          \node at (31.5,-27.5) {\scriptsize $y_1$};
          \node at (31.5,-30) {\scriptsize $y_2$};
          
          \fill[myblue] (30,-5) circle (15.0 pt); 
          \fill[myblue] (30,-7.5) circle (15.0 pt);
          \fill[myblue] (30,-10) circle (15.0 pt);
          \fill[myblue] (30,-12.5) circle (15.0 pt);
          \fill[myblue] (30,-15) circle (15.0 pt);
          \fill[myblue] (30,-17.5) circle (15.0 pt);                                
          \fill[myblue] (30,-20) circle (15.0 pt);
          \fill[myred] (30,-22.5) circle (15.0 pt);
          \fill[myred] (30,-25) circle (15.0 pt);                                
          \fill[mygreen] (30,-27.5) circle (15.0 pt);                                
          \fill[mygreen] (30,-30) circle (15.0 pt);                                

        \node at (12,-3) {\scriptsize $V_{X'} \cup V_{U'} \cup V_{Y'}$};
        \node at (29,-3) {\scriptsize $V_{X} \cup V_{U} \cup V_{Y}$};
\end{tikzpicture}
\caption{$\B(\bA, \bB, \bC, \bK)$}
\label{fig:bigraph}
\end{subfigure}
\caption{The digraph $\D(\bA, \bB, \bC, \bK)$ and bipartite graph $\B(\bA, \bB, \bC, \bK)$ of the given closed-loop structured system $(\bA, \bB, \bC, \bK)$ is shown in Figure~\ref{fig:digraph} and Figure~\ref{fig:bigraph}, respectively.}
\label{fig:eg1}
\end{figure}
%\vspace{-7 mm}
%In the next section, we prove NP-completeness of Problem~\ref{prob:verification} and NP-hardness of Problem~\ref{prob:design}.
%%%%%%%%%%%%%%%%%%%%%%%%%%%%%%%%%%%%%%%%%%%%%%%
\section{Complexity Results}\label{sec:NP}
In this section, we analyze the complexity of Problem~\ref{prob:verification} and Problem~\ref{prob:design}. We prove Problem~\ref{prob:verification} is NP-complete  using a known NP-complete problem, the {\em blocker problem}. We  prove that   Problem~\ref{prob:design} is  NP-hard using {\em minimum set multi-covering problem}, a known NP-hard problem. First, we prove NP-completeness of Problem~\ref{prob:verification}.

\subsection{Complexity of Feedback Resilient Verification Problem}\label{subsec:verif}
The NP-completeness result for the feedback resilient verification problem (Problem~\ref{prob:verification}) is obtained by reducing a known NP-complete problem, the {\em blocker problem}, to an instance of Problem~\ref{prob:verification}.  Now we describe the blocker problem. 

\begin{problem}[Blocker problem: $Block(G,1,\gamma)$]\label{prob:two}
Given a bipartite graph $G := (V, \tV, \E)$ with $|V| \leqslant |\tV|$, does there exist a set $T \subseteq \E$ with $|T| \leqslant \gamma$ such that $\upsilon(G') \leqslant \upsilon(G)-1 $, where $\upsilon(G)$ and $\upsilon(G')$ denote the size of the maximum matching in $G$ and $G'$, respectively with $G':=(V, \tV ,\E\setminus T)$.
\end{problem}
 
$Block(G,1,\gamma)$ is NP-complete \cite{ZenReiPic:09}. Note that $Block(G,1,\gamma)$ is NP-complete even when $G$ has  perfect matching \cite{ZenReiPic:09}. Thus, we have the following proposition.

\begin{proposition}\cite[Theorem~3.3]{ZenReiPic:09}\label{prop:blocker}
Consider a bipartite graph $G=(V, \tV, \E)$ with $|V| \leqslant |\tV|$. Let $\upsilon(G) = |V|$, i.e., $G$ has a perfect matching. Then, $Block(G,1,\gamma)$ is NP-complete. 
\end{proposition}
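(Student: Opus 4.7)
The plan is to show two things: (i) that $Block(G,1,\gamma)$ lies in NP, and (ii) that it remains NP-hard when $G$ is restricted to have a perfect matching.

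NP membership is straightforward. A certificate is a candidate blocker $T \subseteq \E$ with $|T| \leq \gamma$. To verify, I would form $G' := (V,\tV,\E\setminus T)$, compute $\upsilon(G')$ via Hopcroft--Karp in $O(|\E|\sqrt{|V|+|\tV|})$ time, and check $\upsilon(G') \leq |V|-1$. Both steps are polynomial in the size of $G$ and $\gamma$.

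For NP-hardness I would design a polynomial-time reduction from a classical NP-complete problem such as 3-SAT (or, alternatively, Vertex Cover on general graphs). The construction would encode the source instance in a bipartite graph $G$ via variable/element gadgets, each admitting two ``canonical'' local perfect sub-matchings that correspond to the two truth assignments (or to the two states inside/outside the cover). Clause or edge gadgets would then be wired in so that destroying one clause's sub-matching requires cutting at least one edge inside each contributing variable gadget. Auxiliary ``padding'' vertices would be added to guarantee that $G$ itself admits a perfect matching, thereby honoring the hypothesis $\upsilon(G) = |V|$. The budget $\gamma$ would be calibrated so that a blocker of size at most $\gamma$ exists in $G$ if and only if the source instance is a yes-instance.

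The main obstacle lies in gadget design: the base graph $G$ must admit a perfect matching (a nontrivial constraint that restricts gadget freedom), and, more delicately, any near-minimum blocker must be forced to respect the gadget structure rather than exploit shortcuts across gadget seams. The standard remedy is to amplify each gadget --- for instance by parallel or replicated subgadgets --- so that any ``cross-gadget'' blocker necessarily exceeds $\gamma$, while faithful intra-gadget blockers stay within the budget. Once the gadgets are tuned correctly, the equivalence ``source instance is yes $\iff$ $G$ admits a blocker of size at most $\gamma$'' follows by a direct translation, yielding NP-completeness under the perfect-matching hypothesis.
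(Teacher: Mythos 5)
The paper does not prove this proposition at all: it is imported verbatim as \cite[Theorem~3.3]{ZenReiPic:09}, so there is no in-paper argument to compare against. Judged on its own terms, your proposal has a genuine gap. The NP-membership half is fine (a candidate blocker $T$ is a polynomial-size certificate, and checking $\upsilon(G\setminus T)\leqslant |V|-1$ via a maximum-matching computation is polynomial). But the NP-hardness half is a plan, not a proof: every load-bearing element --- the choice of source problem, the variable/clause (or vertex/edge) gadgets, the padding that restores a perfect matching, the calibration of $\gamma$, and above all the argument that a budget-$\gamma$ blocker cannot ``cheat'' across gadget seams --- is described only by what it \emph{would} accomplish, never constructed. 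You explicitly flag gadget design as ``the main obstacle'' and then defer it; that obstacle is the entire content of the theorem. As written, nothing in the proposal rules out that the generic gadget scheme you sketch simply fails (e.g., that a small blocker can always destroy a perfect matching by cutting a few high-degree padding edges rather than respecting the encoding), and the amplification remedy you invoke is itself unspecified.

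If you want an actual proof, the cleanest route is the one in the cited reference: Zenklusen et al.\ reduce from \textsc{Vertex Cover} (in fact their hardness holds already for bipartite graphs of small maximum degree), exploiting the K\H{o}nig duality between minimum vertex covers and maximum matchings in bipartite graphs so that removing the edges incident to a vertex cover certificate drops the matching number in a controlled way. That reduction is concrete and short precisely because it does not need free-form truth-assignment gadgets; the matching/cover duality does the work that your proposal delegates to unspecified gadget tuning. Either supply such a construction in full, or simply cite the result as the paper does.
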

Now, we reduce a general instance of blocker problem to an instance of Problem~\ref{prob:verification} and then prove that Problem~\ref{prob:verification} is NP-complete.
\begin{algorithm}[h]
\caption{Pseudocode showing reduction of the blocker problem to an instance of Problem~\ref{prob:verification} \label{alg:reduction}}
  \begin{algorithmic}
\State \textit {\bf Input:} General bipartite graph $G=(V,\tV,\E)$ with $|V|=r$ and $|\tV|=s$ 
\State \textit{\bf Output:} $\bA \in \{0, \*\}^{(s+2) \times (s+2)}$, $\bB \in \{0, \*\}^{(s+2) \times s}$, $\bC \in \{0, \*\}^{r\times (s+2)}$ and $\bK \in \{0, \*\}^{s \times r}$ 
\end{algorithmic}
  \begin{algorithmic}[1]
  \State Define $(\bA, \bB, \bC, \bK)$ connected as follows:
  \State  $\bA_{ij} \leftarrow \begin{cases}
    \*$, for $i=2$ and $j \in \{1,\ldots,s+2\}, \\
    \*$, for $i \in \{1,\ldots,s+2\}$ and $j=2, \label{step:A}\\
    \*$, for $i \in \{3,\ldots,s-r+2\}$ and $j \in \{3,\ldots,s+2\},\\
    0, \mbox{~otherwise}.
  \end{cases} $
   \State  $\bB_{ij} \leftarrow \begin{cases}
    \*$, for $i \in \{s-r+3,\ldots,s+2\} $ and $ j \in \{1,\ldots,s\} , \label{step:B}\\
    0, \mbox{~otherwise}.
  \end{cases} $ 
  \State  $\bC_{ij} \leftarrow \begin{cases}
    \*$, for $i \in \{1,\ldots,r\} $ and $j \in \{3,\ldots,s+2\} , \label{step:C}\\
    0, \mbox{~otherwise}.
  \end{cases} $
  \State  $\bK_{ij} \leftarrow \begin{cases}
    \*$, for all $(v_{j},\tv_{i}) \in \E , \label{step:K}\\
    0, \mbox{~otherwise}.
  \end{cases} $
\end{algorithmic}
\end{algorithm}

Algorithm~\ref{alg:reduction} gives a reduction of $Block(G,1,\gamma)$  to an instance of Problem~\ref{prob:verification}. Given a general instance of the blocker problem, i.e., $G=(V,\tV,\E)$, where $V=\{v_{1},\ldots,v_{r}\}$, $\tV=\{\tv_{1},\ldots,\tv_{s}\}$, $s \geqslant r$ and $1\leqslant \gamma \leqslant |\E|$, we construct a closed-loop  structured system $(\bA, \bB, \bC, \bK)$  with $(s+2)$ number of states, $s$ number of inputs and $r$ number of outputs. In Step~\ref{step:A}, we define the set $E_{X}$ as follows: there exist directed edges from node $x_{2}$ to every node in the set $\{x_{1},\ldots,x_{s+2}\}$, from every node in the set $\{x_{1},\ldots,x_{s+2}\}$ to node $x_{2}$ and from every node in $\{x_{3},\ldots,x_{s+2}\}$ to every node in $\{x_{3},\ldots,x_{s-r+2}\}$. By this construction of $\bA$, $\D(\bA)$ is an irreducible graph (see Figure~\ref{fig:Digraph_schematic}). In Step~\ref{step:B}, we construct edge set $E_{U}$ as follows: a directed edge exists  from every input in $\{u_{1},\ldots,u_{s}\}$ to every state node in $\{x_{s-r+3},\ldots,x_{s+2}\}$. Thus in the $\bB$ constructed, no input directly actuates states $\{x_1,\ldots, x_{s-r+2}\}$. In Step~\ref{step:C}, the output edge set $E_{Y}$ is constructed in such a way that every state node in $\{x_{3},\ldots,x_{s+2}\}$ is connected to every output node in $\{y_{1},\ldots,y_{r}\}$. Thus in the $\bC$ constructed, states $\{x_1, x_2\}$ cannot be sensed directly. In Step~\ref{step:K}, the feedback edges are constructed in such a way that for every edge $(v_{i},\tv_{j})$ $\in \E$ in $G$ there is an edge $(y_{i},u_{j})$ in $E_K$. This completes the construction of $\bK$.
An illustrative example demonstrating the  construction of the structured system $(\bA, \bB, \bC, \bK)$ for a given instance of the blocker problem is given in Figure~\ref{fig:eg}.

\begin{figure}[t]
\begin{subfigure}[b]{0.475\textwidth}
\centering
\definecolor{myblue}{RGB}{80,80,160}
\definecolor{mygreen}{RGB}{80,160,80}
\definecolor{myred}{RGB}{144, 12, 63}
\definecolor{myyellow}{RGB}{214, 137, 16}
\definecolor{aquamarine}{rgb}{0.5, 1.0, 0.83}
\begin{tikzpicture} [scale = 0.18]               
          \node at (10,-5) {\scriptsize $v_1$};
          \node at (10,-7.5) {\scriptsize $v_2$};           
         
          \fill[myblue] (12,-5) circle (15.0 pt); 
          \fill[myblue] (12,-7.5) circle (15.0 pt); 
                    
\draw (12.5,-5)  --   (31.5,-5);
\draw (12.5,-5)  --   (31.5,-10);
\draw (12.5,-7.5)  --   (31.5,-7.5);
\draw (12.5,-7.5)  --   (31.5,-5);

 \node at (34,-5) {\scriptsize $\tv_1$};
 \node at (34,-7.5) {\scriptsize $\tv_2$};
 \node at (34,-10) {\scriptsize $\tv_3$};
          
 \fill[myblue] (32,-5) circle (15.0 pt); 
 \fill[myblue] (32,-7.5) circle (15.0 pt);
 \fill[myblue] (32,-10) circle (15.0 pt);

 \node at (12,-3) {\scriptsize $V$};
 \node at (32,-3) {\scriptsize $\tV$};
\end{tikzpicture}
\caption{An example of bipartite graph $G= (V, \tV, \E)$}
\label{fig:illus}
\end{subfigure}
\vspace{5 mm}
\begin{subfigure}[b]{0.475\textwidth}
\centering
\definecolor{myblue}{RGB}{80,80,160}
\definecolor{mygreen}{RGB}{80,160,80}
\definecolor{myred}{RGB}{144, 12, 63}
\definecolor{myyellow}{RGB}{214, 137, 16}
\definecolor{aquamarine}{rgb}{0.5, 1.0, 0.83}
%\begin{figure} 
%\begin{center}
\begin{tikzpicture} [scale = 0.18]               
          \node at (10,-5) {\scriptsize $x'_1$};
          \node at (10,-7.5) {\scriptsize $x'_2$};
          \node at (10,-10) {\scriptsize $x'_3$};
          \node at (10,-12.5) {\scriptsize $x'_4$};
          \node at (10,-15) {\scriptsize $x'_5$};
          \node at (10,-17.5) {\scriptsize $u'_1$};
          \node at (10,-20) {\scriptsize $u'_2$};
          \node at (10,-22.5) {\scriptsize $u'_3$};
          \node at (10,-25) {\scriptsize $y'_1$}; 
          \node at (10,-27.5) {\scriptsize $y'_2$};           
         
          \fill[myblue] (12,-5) circle (15.0 pt); 
          \fill[myblue] (12,-7.5) circle (15.0 pt);
          \fill[myblue] (12,-10) circle (15.0 pt);
          \fill[myblue] (12,-12.5) circle (15.0 pt);
          \fill[myblue] (12,-15) circle (15.0 pt);
          \fill[myred] (12,-17.5) circle (15.0 pt);
          \fill[myred] (12,-20) circle (15.0 pt); 
          \fill[myred] (12,-22.5) circle (15.0 pt);
          \fill[mygreen] (12,-25) circle (15.0 pt);
          \fill[mygreen] (12,-27.5) circle (15.0 pt);  
                    
\draw (12.5,-7.5)  --   (31.5,-5);
\draw (12.5,-5)  --   (31.5,-7.5);
\draw (12.5,-7.5)  --   (31.5,-7.5);
\draw (12.5,-7.5)  --   (31.5,-10);
\draw (12.5,-7.5)  --   (31.5,-12.5);
\draw (12.5,-7.5)  --   (31.5,-15);
\draw (12.5,-10)  --   (31.5,-7.5);
\draw (12.5,-10)  --   (31.5,-10);
\draw (12.5,-10)  --   (31.5,-12.5);
\draw (12.5,-10)  --   (31.5,-15);
\draw (12.5,-12.5)  --   (31.5,-7.5);
\draw (12.5,-15)  --   (31.5,-7.5);

\draw [myblue] (12.5,-12.5)  --   (31.5,-17.5);
\draw [myblue] (12.5,-15)  --   (31.5,-17.5);
\draw [myblue] (12.5,-12.5)  --   (31.5,-20);
\draw [myblue] (12.5,-15)  --   (31.5,-20);
\draw [myblue] (12.5,-12.5)  --   (31.5,-22.5);
\draw [myblue] (12.5,-15)  --   (31.5,-22.5);

\draw [mygreen] (12.5,-25)  --   (31.5,-10);
\draw [mygreen] (12.5,-25)  --   (31.5,-12.5);
\draw [mygreen] (12.5,-25)  --   (31.5,-15);
\draw [mygreen] (12.5,-27.5)  --   (31.5,-10);
\draw [mygreen] (12.5,-27.5)  --   (31.5,-12.5);
\draw [mygreen] (12.5,-27.5)  --   (31.5,-15);

\draw [myred] (12.5,-17.5)  --   (31.5,-17.5);
\draw [myred] (12.5,-20)  --   (31.5,-20);
\draw [myred] (12.5,-22.5)  --   (31.5,-22.5);

\draw [mygreen] (12.5,-27.5)  --   (31.5,-27.5);
\draw [mygreen] (12.5,-25)  --   (31.5,-25);
%
%\draw[myred] (13,-15)  --   (31,-15);

\draw[dashed] (12.5,-17.5)  --   (31.5,-25);
\draw[dashed] (12.5,-17.5)  --   (31.5,-27.5);
\draw[dashed] (12.5,-20)  --   (31.5,-27.5);
\draw[dashed] (12.5,-22.5)  --   (31.5,-25);

          \node at (34,-5) {\scriptsize $x_1$};
          \node at (34,-7.5) {\scriptsize $x_2$};
          \node at (34,-10) {\scriptsize $x_3$};
          \node at (34,-12.5) {\scriptsize $x_4$};
          \node at (34,-15) {\scriptsize $x_5$};
          \node at (34,-17.5) {\scriptsize $u_1$};
          \node at (34,-20) {\scriptsize $u_2$};
          \node at (34,-22.5) {\scriptsize $u_3$};
          \node at (34,-25) {\scriptsize $y_1$};
          \node at (34,-27.5) {\scriptsize $y_2$};
          
          \fill[myblue] (32,-5) circle (15.0 pt); 
          \fill[myblue] (32,-7.5) circle (15.0 pt);
          \fill[myblue] (32,-10) circle (15.0 pt);
          \fill[myblue] (32,-12.5) circle (15.0 pt);
          \fill[myblue] (32,-15) circle (15.0 pt);
          \fill[myred] (32,-17.5) circle (15.0 pt);                                
          \fill[myred] (32,-20) circle (15.0 pt);
          \fill[myred] (32,-22.5) circle (15.0 pt);
          \fill[mygreen] (32,-25) circle (15.0 pt);                                
          \fill[mygreen] (32,-27.5) circle (15.0 pt);                                

        \node at (12,-3) {\scriptsize $V_{X'} \cup V_{U'} \cup V_{Y'}$};
        \node at (32,-3) {\scriptsize $V_{X} \cup V_{U} \cup V_{Y}$};
\end{tikzpicture}
\caption{The bipartite graph representation $\B(\bA, \bB, \bC, \bK)$ for the blocker problem given in Figure~\ref{fig:illus}}
\label{fig:system_bigraph}
\end{subfigure}
\caption{Illustrative example demonstrating the reduction given in Algorithm~\ref{alg:reduction}.}
\label{fig:eg}
\end{figure}
Next, we prove the following result.
\begin{lemma}\label{lem:PM}
Consider a bipartite graph $G = (V, \tV, \E)$ and  let $(\bA, \bB, \bC, \bK)$ be the closed-loop structured system constructed using Algorithm~\ref{alg:reduction}. Then, if there exists a perfect matching in $G$, then there exists a perfect matching in $\B(\bA, \bB, \bC, \bK)$. 
\end{lemma}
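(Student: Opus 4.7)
The plan is to exhibit an explicit perfect matching in $\B(\bA,\bB,\bC,\bK)$ built from a perfect matching $M = \{(v_j,\tv_{\pi(j)}) : j=1,\ldots,r\}$ of $G$, where $\pi : \{1,\ldots,r\} \to \{1,\ldots,s\}$ is the associated injection. My first step is to identify the \emph{forced} pairings in the state block. In $\E_X$, the vertex $x'_1$ is adjacent only to $x_2$ (since the sole incoming edge to $x_1$ in $\D(\bA)$ comes from $x_2$); symmetrically $x_1$ on the right is adjacent only to $x'_2$. This forces $x'_1 \mapsto x_2$ and $x'_2 \mapsto x_1$. Next, for each $i \in \{s-r+3,\ldots,s+2\}$, the only $\E_X$-neighbour of $x'_i$ is again $x_2$, which is already consumed; however, through $\E_U$, each such $x'_i$ is adjacent to every $u_k$, so all $r$ of these left vertices must be routed into $V_U$.

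I would then use $\pi$ to route these $r$ leftover $x'$-vertices to $r$ distinct $u$-vertices by setting $x'_{s-r+2+j} \mapsto u_{\pi(j)}$ for $j=1,\ldots,r$. By injectivity of $\pi$ this consumes exactly $r$ distinct $u$-vertices. The remaining $s-r$ vertices $u_i$ with $i \notin \pi(\{1,\ldots,r\})$ are matched to their $u'_i$ via $\E_\mathbb{U}$. The $u'$-vertices indexed by $\pi(1),\ldots,\pi(r)$, whose $\E_\mathbb{U}$-partners are no longer available, must then be routed into $V_Y$ through $\E_K$; this is precisely where the hypothesis is used, since the edge $(u'_{\pi(j)},y_j) \in \E_K$ exists because $(v_j,\tv_{\pi(j)}) \in M \subseteq \E$. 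Hence I set $u'_{\pi(j)} \mapsto y_j$ for each $j$, which covers all of $V_Y$.

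After these assignments the unmatched left vertices are $\{x'_3,\ldots,x'_{s-r+2}\} \cup \{y'_1,\ldots,y'_r\}$ (total $s$) and the unmatched right vertices are $\{x_3,\ldots,x_{s+2}\}$ (also $s$). Every $x'_i$ in the former range is $\E_X$-adjacent to every $x_k$ with $k \in \{3,\ldots,s+2\}$, and every $y'_j$ is $\E_Y$-adjacent to the same set, so this residual bipartite graph is complete; any bijection closes the construction into a perfect matching of $\B$.

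The only step that genuinely invokes the hypothesis is the $\E_K$-routing above: without the injection supplied by $M$, the $r$ $u$-vertices consumed by the $x'$-routing would collide with the $u'$-vertices still requiring coverage through $\E_K$, and the cardinality accounting would fail. The main obstacle I anticipate in writing this out formally is careful bookkeeping to verify that each of the $2s+r+2$ vertices on each side is covered exactly once, which is routine once the assignment is partitioned into the five disjoint blocks described above.
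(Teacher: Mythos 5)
Your construction is correct and coincides with the paper's own proof: both decompose the perfect matching of $\B(\bA,\bB,\bC,\bK)$ into the same five blocks (the $r$ feedback edges $(u'_{\pi(j)},y_j)$ induced by the matching of $G$, the $s-r$ edges of $\E_{\mathbb{U}}$ for the remaining $u'_i$, the $r$ edges from $V_U$ into $\{x'_{s-r+3},\ldots,x'_{s+2}\}$, an arbitrary bijection in the complete residual block between $\{x'_3,\ldots,x'_{s-r+2},y'_1,\ldots,y'_r\}$ and $\{x_3,\ldots,x_{s+2}\}$, and the two forced pairs $(x'_1,x_2),(x'_2,x_1)$). You merely assemble the blocks in the reverse order and make the injectivity bookkeeping via $\pi$ explicit, which does not change the argument.
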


\begin{proof}
Let $M_{G}$ be a perfect matching in $G$. Since $r \leqslant s$, $|M_{G}|=r$. To prove the result, we extend the matching $M_{G}$ to a perfect matching in the bipartite graph $\B(\bA, \bB, \bC, \bK)$ as follows: from the construction of the structured system $(\bA, \bB,\bC,\bK)$, corresponding to every edge $(v_{i},\tv_{j}) \in \E$ there exists an edge $(u'_{j},y_{i}) \in \E_K$ in $\B(\bA, \bB, \bC, \bK)$. Hence there exists a matching of size $r$, say $M_{1} \subseteq \E_K$, in $\B(\bA, \bB, \bC, \bK)$.	 Notice that in $M_{1}$, $r$ vertices in $V_{Y}$ and $r$ vertices in $V_{U'}$ are matched. With respect to $M_{1}$, $(s-r)$ vertices in $V_{U'}$ of $\B(\bA, \bB, \bC, \bK)$ are unmatched. Let $M_{2} \subseteq \E_{\mathbb{U}}$ be a matching in $\B(\bA, \bB, \bC, \bK)$ that consists of edges that match those $(s-r)$ unmatched vertices in $\B(\bA, \bB, \bC, \bK)$. Hence $M_{1}\cup M_{2}$ is a matching in $\B(\bA, \bB, \bC, \bK)$ of size $s$. Corresponding to $M_{1}\cup M_{2}$, $r$ vertices in $V_{U}$ are unmatched. The only possible way to extend $M_{1}\cup M_{2}$ to a matching where these $r$ vertices are matched is by connecting them to vertices in $V_{X'}$ (since every vertex in $V_{U'}$ is matched in $M_1 \cup M_2$). In the construction of the bipartite graph $\B(\bA, \bB, \bC, \bK)$, all vertices in $V_{U}$ have edges to all the vertices in $\{x'_{s-r+3},\ldots, x'_{s+2}\}$. Notice that $|\{x'_{s-r+3},\ldots, x'_{s+2}\}|=r$. Hence we construct a	 matching $M_{3}$ between $r$ vertices in $V_{U}$ and $r$ vertices in $\{x'_{s-r+3},\ldots, x'_{s+2}\}$. Thus $M_{1}\cup M_{2}\cup M_{3}$ is a matching of size $(s+r)$ in $\B(\bA, \bB, \bC, \bK)$. With respect to the matching $M_{1}\cup M_{2}\cup M_{3}$, $(s-r)$ vertices  $\{x'_3, \ldots, x'_{s-r+2}\}$, two vertices $\{x'_{1},x'_{2}\}$ and $r$ vertices $\{y'_1, \ldots, y'_r\}$ of $\B(\bA, \bB, \bC, \bK)$ are unmatched on the left side. Thus the total unmatched vertices in the left side add up to $(s+2)$. On the right side, $(s+2)$ vertices $\{x_{1}, \ldots, x_{s+2}\}$ are unmatched. Notice that, all  vertices in $\{x'_{3}, \ldots, x'_{s-r+2}\}$ and all vertices in $\{y'_1, \ldots, y'_r\}$ are connected to all vertices in $\{x_{3}, \ldots, x_{s+2}\}$. Define a matching $M_{4}$ between vertices $\{x'_{3}, \ldots, x'_{s-r+2},  y'_1, \ldots, y'_r\}$ and $\{x_{3}, \ldots, x_{s+2}\}$. Thus $|M_4| = s$ and $|M_{1}\cup M_{2}\cup M_{3}\cup M_{4}| = 2s+r$. Further, $\{(x'_{1},x_{2}), (x'_{2},x_{1})\} \in \E_X$.  Thus $M_{1}\cup M_{2}\cup M_{3}\cup M_{4}\cup \{(x'_{1},x_{2}),(x'_{2},x_{1})\}$ is a perfect matching in $\B(\bA, \bB, \bC, \bK)$. Thus, if there exists a perfect matching in $G$, then there exists a perfect matching in $\B(\bA, \bB, \bC, \bK)$ for the structured system $(\bA, \bB, \bC, \bK)$ constructed in Algorithm~\ref{alg:reduction}.
\end{proof}

\noindent The  result below gives the complexity of Problem~\ref{prob:verification}.
\begin{theorem}\label{th:NP}
Consider a bipartite graph $G=(V,\tV, \E)$ with a perfect matching. Let $(\bA, \bB, \bC, \bK)$ be the closed-loop structured system constructed using Algorithm~\ref{alg:reduction}. Then, there exists a blocker $T$ of size $\gamma$ in $G$  if and only if there exists a blocker $\widetilde{T} \subseteq \E_K$ of size $\gamma$ in bipartite graph $\B(\bA, \bB, \bC, \bK)$. Moreover, Problem~\ref{prob:verification} is NP-complete.
\end{theorem}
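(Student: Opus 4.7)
My plan has two main ingredients: (i) reduce the no-SFMs question on $(\bA,\bB,\bC,\bK^\I)$ to a pure bipartite perfect-matching question on $\B(\bA,\bB,\bC,\bK^\I)$, and (ii) establish, via a forced-match cascade, that the $\E_K$-edges appearing in any perfect matching of $\B(\bA,\bB,\bC,\bK)$ are in exact correspondence with perfect matchings of $G$. For (i), note that by Step~\ref{step:A} the state digraph $\D(\bA)$ is irreducible, and by Steps~\ref{step:B}--\ref{step:C} every input reaches every state in $\{x_{s-r+3},\ldots,x_{s+2}\}$ while every state in $\{x_3,\ldots,x_{s+2}\}$ reaches every output. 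Hence so long as at least one feedback edge remains after the failure $\I$, all state nodes lie in a single SCC that also contains that edge, so condition~(a) of Proposition~\ref{prop:SFM1} is automatic; by Proposition~\ref{prop:SFM2}, no-SFMs is then equivalent to $\B(\bA,\bB,\bC,\bK^\I)$ having a perfect matching.

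For (ii), I will show that every perfect matching $M$ of $\B(\bA,\bB,\bC,\bK)$ uses exactly $r$ edges of $\E_K$, and those edges correspond (via the identification of Step~\ref{step:K}) to a perfect matching of $G$. The cascade proceeds as follows: \emph{(a)} $x_1$'s only left-neighbour in $\B$ is $x'_2$ and $x'_1$'s only right-neighbour is $x_2$, forcing $\{(x'_2,x_1),(x'_1,x_2)\}\subseteq M$; \emph{(b)} the $s$ right-side states $\{x_3,\ldots,x_{s+2}\}$ then have precisely $s$ remaining neighbours, namely $\{x'_3,\ldots,x'_{s-r+2}\}\cup\{y'_1,\ldots,y'_r\}$, so these $s$ left-vertices are entirely consumed matching them, and in particular no $y'_i$ is matched through $\E_{\mathbb{Y}}$; \emph{(c)} the leftover $\{x'_{s-r+3},\ldots,x'_{s+2}\}$ can reach only $V_U$ (via $\E_U$), forcing $r$ of the $u_j$'s through $\E_U$ and the remaining $s-r$ through $\E_{\mathbb{U}}$; \emph{(d)} this leaves $r$ unmatched vertices in $V_{U'}$ and all of $V_Y$ unmatched on the right, joined only by $\E_K$-edges, so the $\E_K$-part of $M$ is exactly a perfect matching of $G$.

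Combining the cascade with Lemma~\ref{lem:PM} (which supplies the converse: every perfect matching of $G$ extends to one of $\B$), I conclude that for every $\I\subseteq\{1,\ldots,m\}\times\{1,\ldots,p\}$, $\B(\bA,\bB,\bC,\bK^\I)$ has a perfect matching iff $G\setminus T$ does, where $T\subseteq\E$ is the image of $\I$ under Step~\ref{step:K}. Therefore a size-$\gamma$ blocker $\widetilde{T}\subseteq\E_K$ of $\B$ exists iff a size-$\gamma$ blocker $T$ of $G$ exists, which is the first claim of the theorem. For NP-completeness, membership of the complement of Problem~\ref{prob:verification} in NP follows by guessing $\I$ with $|\I|\leqslant\gamma$ and checking the matching criterion in polynomial time (Proposition~\ref{prop:SFM2}, \cite{Die:00}), while Algorithm~\ref{alg:reduction} together with the above equivalence is a polynomial-time reduction from $Block(G,1,\gamma)$, which is NP-complete even when $G$ has a perfect matching by Proposition~\ref{prop:blocker}. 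Hence Problem~\ref{prob:verification} is NP-complete.

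The step I expect to be most delicate is the tightness of the count in~(b): one must verify that the available left-side neighbourhood of $\{x_3,\ldots,x_{s+2}\}$ after~(a) has cardinality \emph{exactly} $s$, with no slack, so that no $y'_i$ ever pairs with $y_i$. This is what rules out a ``phantom'' perfect matching that would survive even after $G$ has been fully blocked, and it is precisely what dictates the specific block dimensions ($s-r+2$, $s-r+3$, \ldots) hardcoded into Algorithm~\ref{alg:reduction}.
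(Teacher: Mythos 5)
Your proposal is correct and follows essentially the same route as the paper: the forced-match cascade in your step (ii) is exactly the paper's only-if argument (forcing $(x'_1,x_2),(x'_2,x_1)$, then consuming $\{x'_3,\ldots,x'_{s-r+2}\}$ and all of $V_{Y'}$ on $\{x_3,\ldots,x_{s+2}\}$ so that every $y_j$ must be matched through $\E_K$), and the converse direction is supplied by Lemma~\ref{lem:PM} just as in the paper. Your part (i) (reducing no-SFMs to pure perfect matching via irreducibility of $\D(\bA)$) and your remark that it is the \emph{complement} of Problem~\ref{prob:verification} that lies in NP make explicit two points the paper's proof leaves implicit, but they do not change the argument.
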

 
\begin{proof} 
{\bf If part:} Here, we assume $\B(\bA, \bB, \bC, \bK)$ has a blocker $\widetilde{T} \subseteq \E_K$, $|\widetilde{T}| = \gamma$ and then prove that $G$ has a blocker $T$ such that $|T| = \gamma$. To the contrary, assume that there exists no blocker in $G$ of size $\gamma$. Thus, there exists a perfect matching in $G$ even after removing any of the $\gamma$ edges in $\E$. From Lemma~\ref{lem:PM}, if there exists a perfect matching in $G$, then there exists a perfect matching in $\B(\bA, \bB, \bC, \bK)$. But this contradicts the assumption that $\B(\bA, \bB, \bC, \bK)$ has a blocker. This completes the if-part.

\noindent {\bf Only if part:} Here, we assume there exists a blocker in $G$ of size $\gamma$ and prove that there exists a blocker $\widetilde{T} \subseteq \E_K$ with $|\widetilde{T}| = \gamma$ in $\B(\bA, \bB, \bC, \bK)$. To the contrary, assume that there exists no blocker $\widetilde{T} \subseteq \E_K$ in $\B(\bA, \bB, \bC, \bK)$. Then there exists a perfect matching, say $M_{\B}$, in $\B(\bA, \bB, \bC, \bK)$ even after removing all the edges in $\widetilde{T}$. Notice that $\{(x'_1, x_2), (x'_2, x_1)\} \in M_\B$, since vertices $x'_1$ and $x_1$ are connected only to vertices $x_2$ and $x'_2$ respectively in $\B(\bA, \bB,\bC,\bK)$. Consider the vertices $\{x'_3, \ldots, x'_{s-r+2}\}$.  Note that, these vertices can be only matched to vertices in $V_X \setminus \{x_1, x_2\}$, since there are no vertices in $\B(\bA, \bB, \bC, \bK)$ connecting them as $x_2$ is already matched to $x'_1$. After we match vertices $\{x'_3, \ldots, x'_{s-r+2}\}$ to $(s-r)$ vertices in $\{x_3, \ldots, x_{s+2}\}$, we are left with $r$ vertices which are unmatched in $\{x_3, \ldots, x_{s+2}\}$. The only possible way in which these $r$ vertices can be matched is by using edges from $\E_Y$ as other vertices which have edges with them, i.e., $\{x'_2, \ldots, x'_{s-r+2}\}$, are already matched. Since $|V_{Y'}|=r$, all the vertices $\{y'_1,\ldots,y'_r\}$ must be matched to $r$ remaining vertices in $V_X$. Now consider the vertices $\{y_1,\ldots,y_r\}$. As $\{y'_1,\ldots,y'_r\}$ are already matched we are left with the only option of matching these vertices to vertices in $V_{U'}$. Hence, there should be a matching of size $r$ to match all the vertices $\{y_1,\ldots,y_r\}$ using edges only from $\E_K$. So in $M_\B$ there exists $M'_\B \subset M_\B$ and $M'_\B \subseteq \E_K$ such that $|M'_\B| = r$. From the construction of the structured system $(\bA, \bB, \bC, \bK)$, notice that $(u'_{j}, y_{i}) \in \E_K$ if $(v_{i},\tv_{j})\in \E$. Thus, there exists a matching of size $|M'_\B|$ in $G$. Further, this is a perfect matching in $G$ as $|M'_\B|=r$. But this contradicts the assumption that $G$ has a blocker. Hence there is a blocker in $\B(\bA, \bB, \bC, \bK)$. 

Using the if-part and the only-if part and from Proposition~\ref{prop:blocker},  checking if the bipartite graph $\B(\bA, \bB, \bC, \bK)$ has a perfect matching after removing any set of feedback links of size at most $\gamma$   is NP-complete. Hence, Problem~\ref{prob:verification} is NP-complete.
\end{proof}

\begin{figure}[t]
\centering
%\begin{minipage}{.28\textwidth}
\begin{tikzpicture}[scale=0.07, ->,>=stealth',shorten >=0.3pt,auto,node distance=1.7cm,
                thick,main node/.style={circle,draw,font=\scriptsize\bfseries}]

  \node[main node] (1) {$x_1$};
  \node[main node] (2) [right of=1] {$x_2$};
  \node[main node] (3) [right of=2] {$\{x_{s-r+3}, \ldots, x_{s+2}\}$};
  \node[main node] (4) [below of=2] {$\{x_{3}, \ldots, x_{s-r+2}\}$};
\draw[] (3) -> (4); 
\path[every node/.style={font=\sffamily\small}]
(1) edge[bend left = 30] node [left] {} (2) 
(2) edge[bend left = 30] node [left] {} (1)
(2) edge[bend left = 30] node [left] {} (3) 
(3) edge[bend left = 30] node [left] {} (2)
(2) edge[bend right = 30] node [left] {} (4)
(4) edge[bend right = 30] node [left] {} (2);  
 \end{tikzpicture}
 %\end{minipage}\hspace{0.1 cm}
%\begin{minipage}{.19\textwidth}
\caption {Schematic diagram showing construction of digraph $\D(\bA)$ for the state matrix $\bA$ constructed  in Algorithm~\ref{alg:reduction}. All vertices in the above figure is strongly connected. Thus $\D(\bA)$ is irreducible.} 
\label{fig:Digraph_schematic}
%\end{minipage}
%\vspace*{-7 mm}
\end{figure}

For the state matrix $\bA$ constructed in Algorithm~\ref{alg:reduction}, $\D(\bA)$ is irreducible:  for any $i,j \in \{1,\ldots, s+2\}$, there exists a path from $x_{i}$ to $x_{j}$ through node $x_{2}$. 
A schematic diagram that shows the digraph $\D(\bA)$ for the $\bA$ given in Algorithm~\ref{alg:reduction} is given in Figure~\ref{fig:Digraph_schematic}. The following result is an immediate consequence of Theorem~\ref{th:NP}.

\begin{cor}\label{cor:NP_irreducible}
Consider a closed-loop structured system $(\bA, \bB, \bC, \bK)$. Then, Problem~\ref{prob:verification} is NP-complete  when $\D(\bA)$ is irreducible.
\end{cor}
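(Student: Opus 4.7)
The plan is to observe that the corollary is essentially a restriction-refinement of Theorem~\ref{th:NP}: the NP-hardness reduction from $Block(G,1,\gamma)$ given in Algorithm~\ref{alg:reduction} already outputs a structured system whose state digraph is irreducible. So the argument has two ingredients: (i)~check that the constructed $\bA$ gives an irreducible $\D(\bA)$, and (ii)~invoke Theorem~\ref{th:NP} together with membership of Problem~\ref{prob:verification} in NP.

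First I would re-examine Step~\ref{step:A} of Algorithm~\ref{alg:reduction}. By that step, in $\D(\bA)$ the vertex $x_2$ has an outgoing edge to every vertex $x_j$ with $j\in\{1,\ldots,s+2\}$, and symmetrically every $x_i$ has an outgoing edge into $x_2$. Consequently, for any pair $i,j\in\{1,\ldots,s+2\}$ the two-edge walk $x_i\to x_2\to x_j$ is a directed path in $\D(\bA)$, so $\D(\bA)$ is strongly connected, hence irreducible. This is exactly what is depicted in Figure~\ref{fig:Digraph_schematic} and noted immediately before the corollary.

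Next I would use this to conclude NP-hardness of the restricted problem. Given an arbitrary instance $(G,\gamma)$ of the blocker problem with $G$ having a perfect matching, Algorithm~\ref{alg:reduction} produces in polynomial time a closed-loop structured system $(\bA,\bB,\bC,\bK)$ which by Lemma~\ref{lem:PM} has $\B(\bA,\bB,\bC,\bK)$ admitting a perfect matching, hence by Proposition~\ref{prop:SFM2} satisfies condition~(b) of Proposition~\ref{prop:SFM1}; condition~(a) is immediate because $\D(\bA,\bB,\bC,\bK)$ is obtained from the irreducible $\D(\bA)$ by attaching inputs, outputs and feedback links so that every $x_i$ sits in a single SCC containing feedback edges. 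Thus the constructed instance is a valid input to Problem~\ref{prob:verification}, it is irreducible, and by Theorem~\ref{th:NP} solving Problem~\ref{prob:verification} on it answers the blocker instance. Membership in NP is inherited verbatim from Theorem~\ref{th:NP}: a subset $\I$ with $|\I|\le \gamma$ that breaks the no-SFMs property serves as a polynomial-size certificate, verifiable in $O(n^{2.5})$ time via Propositions~\ref{prop:SFM1} and~\ref{prop:SFM2}.

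There is no real obstacle here beyond being careful that the reduction in Algorithm~\ref{alg:reduction} is already an \emph{irreducible} instance; since that was observed explicitly before the corollary, the proof is essentially a one-line appeal. I would therefore write the corollary's proof as a short paragraph noting that Algorithm~\ref{alg:reduction} outputs irreducible $\D(\bA)$, so the NP-hardness reduction of Theorem~\ref{th:NP} restricts to irreducible systems, and NP-membership is unchanged.
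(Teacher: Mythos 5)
Your proposal is correct and follows essentially the same route as the paper: the paper likewise observes (immediately before the corollary, with Figure~\ref{fig:Digraph_schematic}) that for any $i,j$ there is a path $x_i \to x_2 \to x_j$ in the $\D(\bA)$ built by Algorithm~\ref{alg:reduction}, so the digraph is irreducible and the corollary is an immediate consequence of Theorem~\ref{th:NP}. Your additional checks that the constructed instance satisfies the no-SFMs hypothesis and that NP-membership carries over are sound and slightly more explicit than the paper's one-line treatment, but they do not constitute a different argument.
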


Now let us consider Problem~\ref{prob:verification} for  systems in which all state nodes are spanned by disjoint union of cycles. In other words, condition~b) in Proposition~\ref{prop:SFM1} is satisfied without using any feedback connections.  This class of systems is called  {\em structurally cyclic} systems \cite{MooChaBel-018_erratum}.  There is a wide class of systems so-called  {\em self-damped} systems that include multi-agent systems and epidemic systems, that are structurally cyclic  \cite{ChaMes:13}. 

\begin{lemma}\label{lem:scc}
Consider a closed-loop structured system $(\bA, \bB, \bC, \bK)$ in which all state nodes are spanned by disjoint union of cycles. Then, Problem~\ref{prob:verification} is solvable in $O(n^2)$ operations.
\end{lemma}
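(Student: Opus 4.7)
My plan is to prove the lemma via a two-step reduction: first from the general verification problem to checking only condition~(a) of Proposition~\ref{prop:SFM1}, and then from that condition to an $O(n^2)$ SCC-based certificate.

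The first step exploits the structural cyclicity hypothesis. Since all state nodes of $(\bA,\bB,\bC)$ are spanned by a disjoint union of cycles, these cycles lie entirely within the state digraph $\D(\bA)$ and use only edges from $E_X$. Removing any set of feedback edges to obtain $\D(\bA,\bB,\bC,\bK^\I)$ does not disturb the state-cycle cover, so condition~(b) of Proposition~\ref{prop:SFM1} is automatically satisfied for \emph{every} $\I \subseteq E_K$. Consequently, Problem~\ref{prob:verification} reduces to the following combinatorial statement: for every $\I$ with $|\I| \leqslant \gamma$, each state node $x_i$ must lie in an SCC of $\D(\bA,\bB,\bC,\bK^\I)$ containing at least one edge from $E_K \setminus \I$.

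The second step is algorithmic. I would compute the SCC decomposition of the original closed-loop digraph $\D(\bA,\bB,\bC,\bK)$ once via Tarjan's algorithm in $O(|V|+|E|) = O(n^2)$ time, as noted after Proposition~\ref{prop:SFM1}. Then, for each state $x_i$, I would locate its SCC $S(x_i)$ and verify that the feedback edges attached to $S(x_i)$ provide enough redundancy to survive any $\gamma$-removal. Since the closed-loop digraph has $O(n)$ vertices, $O(n^2)$ edges, and $O(n)$ states, this post-processing step also fits within the $O(n^2)$ budget; in particular, one can piggy-back on the state-reachability information computed alongside the SCCs to identify, in $O(n^2)$ total, which feedback edges are certified to support $x_i$.

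The main obstacle is establishing correctness of the local check. Necessity is straightforward by monotonicity: removing all feedback edges within $S(x_i)$ cannot increase its post-removal SCC feedback content, so if $S(x_i)$ has too few such edges, $x_i$'s SCC in the reduced digraph becomes devoid of feedback. Sufficiency is the delicate part, because removing feedback edges can in principle fragment an SCC; the key lever is that the state-cycle cover guaranteed by structural cyclicity preserves the state-SCC structure of $\D(\bA)$ inside every $\D(\bA,\bB,\bC,\bK^\I)$, which constrains the possible fragmentation patterns. Using this constraint, one can argue that whenever the feedback redundancy certificate holds, at least one state-only cycle through $x_i$ and a surviving feedback edge remains intact, thereby establishing the claimed $O(n^2)$ resolution of Problem~\ref{prob:verification} for structurally cyclic systems.
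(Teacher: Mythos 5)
Your first step --- using structural cyclicity to discharge condition~b) of Proposition~\ref{prop:SFM1} for every removal set $\I$, so that only condition~a) needs to be re-verified --- is exactly the reduction the paper makes, and it is correct: the disjoint cycle cover lives entirely in $E_X$ and is untouched by feedback-edge deletions. The paper then finishes in one line: compute the SCCs of $\D(\bA,\bB,\bC,\bK)$ once in $O(n^2)$ time and check that every SCC containing a state node has at least $\gamma+1$ edges of $E_K$.

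The genuine gap is in your second step, and you have in fact put your finger on it yourself: you never state what the ``redundancy certificate'' for $S(x_i)$ actually is, and the sufficiency argument you sketch does not go through. Knowing that some state-only cycle through $x_i$ survives (trivially true, since no edges of $E_X$ are deleted) and that some feedback edge of $S(x_i)$ survives does not imply that $x_i$ and a surviving feedback edge lie in a \emph{common} SCC of $\D(\bA,\bB,\bC,\bK^{\I})$: the deletions can disconnect $x_i$ from every surviving feedback edge even though the state-cycle cover remains intact. Concretely, take two states with self-loops and the single directed cycle $x_1 \to y_1 \to u_1 \to x_2 \to y_2 \to u_2 \to x_1$; this is one SCC with two feedback edges, yet deleting only $(y_1,u_1)$ (so $\gamma=1$) collapses the SCC of $x_1$ to the feedback-free singleton $\{x_1\}$, violating condition~a). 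So the natural local test ``$S(x_i)$ contains at least $\gamma+1$ feedback edges'' is necessary (your monotonicity argument for that direction is fine) but not sufficient. You should be aware that the paper's own proof uses precisely this count-based test and asserts sufficiency without argument, so your instinct that ``sufficiency is the delicate part'' is well placed; but your proposal does not close the gap, and closing it would require either restricting the class of systems further or replacing the per-SCC edge count by a genuinely different certificate (e.g., one quantifying how many feedback edges must be deleted before $x_i$ can no longer reach, and be reached from, any surviving feedback edge).
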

\begin{proof}
In structurally cyclic  systems (systems in which all state nodes are spanned by disjoint union of cycles), condition~b) is satisfied without using any feedback edges. In order to  maintain the no-SFMs criteria, the closed-loop system must maintain condition~a) of Proposition~\ref{prop:SFM1} even after removing any $\gamma$ feedback links. This can be  verified in $O(n^2)$ operations  by finding all  SSCs of $\D(\bA, \bB, \bC, \bK)$ and checking whether each SSC has at least $(\gamma+1)$ feedback links. Finding SCCs in a digraph has $O(n^2)$ operations \cite{Die:00} and hence the proof follows.
\end{proof}

Subsection~\ref{subsec:verif} concludes that Problem~\ref{prob:verification} is NP-complete for general structured  systems and structured systems whose state digraph $\D(\bA)$ is irreducible.  However,  Problem~\ref{prob:verification} is polynomial-time solvable when condition~b) in Proposition~\ref{prop:SFM1} is satisfied (Lemma~\ref{lem:scc}). In the next subsection, we analyze the complexity of Problem~\ref{prob:design}.

\subsection{Complexity of Sparsest Resilient Feedback  Design Problem}

In this section, we analyze the  complexity of Problem~\ref{prob:design}. Firstly, we claim that Problem~\ref{prob:design} is NP-hard for general systems. This result is a consequence of Theorem~\ref{th:NP} as Problem~\ref{prob:verification} which is the decision problem \cite{GarJoh:02} corresponding to the optimization problem, Problem~\ref{prob:design}, is NP-complete.  
\begin{cor}\label{cor:design_NP}
Consider a structured system $(\bA, \bB, \bC)$. Then, Problem~\ref{prob:design} is NP-hard.
\end{cor}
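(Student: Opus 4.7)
The plan is to invoke the standard correspondence between an optimization problem and its associated feasibility-decision version, using Theorem~\ref{th:NP} as the source of hardness. The feasible set $\K_\gamma$ of Problem~\ref{prob:design} is, by construction, exactly the set of feedback matrices $\bK$ for which Problem~\ref{prob:verification} returns YES. Thus any algorithm that solves Problem~\ref{prob:design} must implicitly be able to certify membership in $\K_\gamma$, which is NP-complete by Theorem~\ref{th:NP}. Turning this into a formal reduction requires a little care, because merely knowing the minimum sparsity does not by itself tell us whether a particular candidate $\bK$ is feasible.

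Concretely, I would first pass to the decision version of Problem~\ref{prob:design}: given $(\bA, \bB, \bC, \gamma, k)$, does there exist $\bK \in \K_\gamma$ with $\norm[\bK]_0 \leqslant k$? A polynomial-time algorithm for Problem~\ref{prob:design} immediately yields one for this decision version by comparing the reported optimum with~$k$, so it suffices to show that the decision version is NP-hard. I would reduce an instance $(\bA, \bB, \bC, \bK, \gamma)$ of Problem~\ref{prob:verification} to this decision version by augmenting $(\bA, \bB, \bC)$ with polynomial-size structural gadgets that pin down the support of every feasible feedback matrix to lie within the support of the given~$\bK$, and then setting the sparsity threshold to $k = \norm[\bK]_0$. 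Under such a reduction, the decision version answers YES if and only if the original $\bK$ is resilient, so any polynomial-time algorithm for Problem~\ref{prob:design} would contradict Theorem~\ref{th:NP}.

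The main obstacle is the support-enforcement gadget. For every entry $(i,j)$ with $\bK_{ij}=0$ I would attach an auxiliary cluster of state/input/output nodes so that (i) using the $(i,j)$ feedback link in any feasible solution either violates the no-SFMs criterion under some $\gamma$-failure or inflates the zero-norm strictly above $k$, while (ii) the full-feedback matrix on the enlarged system remains in its $\K_\gamma$, preserving the standing non-emptiness assumption. Variants of the construction in Algorithm~\ref{alg:reduction}---in particular, the pattern that glues inputs and outputs through dedicated cycle-forming states---can be adapted to realize such gadgets in polynomial time, after which NP-hardness of Problem~\ref{prob:design} follows directly from Theorem~\ref{th:NP}.
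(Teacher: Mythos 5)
Your instinct that the paper's one-line justification is incomplete is sound: an oracle that returns a sparsest element of $\K_\gamma$ does not obviously decide whether a \emph{given} $\bK$ belongs to $\K_\gamma$, so Problem~\ref{prob:verification} is not literally the ``decision version'' of Problem~\ref{prob:design} in the textbook sense. However, your repair has a genuine gap, and the gap is not merely that the gadget is left unconstructed. The mechanism you propose --- that activating a feedback link outside the support of the given $\bK$ should ``violate the no-SFMs criterion under some $\gamma$-failure'' --- cannot be realized, because both conditions of Proposition~\ref{prop:SFM1} are monotone in the feedback edge set: adding a feedback edge can only enlarge the SCC containing each state node (so a node whose SCC already contained a feedback edge still has one) and can only add cycles to the digraph. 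Consequently $\K_\gamma$ is upward closed under support inclusion, and no structural augmentation can make an extra feedback link \emph{harmful} to feasibility. That leaves only your second disjunct, the norm budget, and there the argument becomes circular: to guarantee that every feasible $\bK'$ with $\norm[\bK']_0 \leqslant k$ certifies resilience of the original $\bK$, the gadget would have to make each of the $k$ links of $\bK$ individually indispensable in the augmented system, but any augmentation that achieves this also changes which $\gamma$-failure sets are relevant, so the answer no longer tracks the original verification instance. As written, the reduction does not go through.

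For comparison, the paper itself offers only the same loose ``decision problem'' correspondence that you rightly distrust; the corollary is actually secured, independently and rigorously, by Theorem~\ref{th:prob1_NP}, which gives a direct polynomial-time reduction from the minimum set multi-covering problem to Problem~\ref{prob:design} for structurally cyclic systems --- a subclass of general systems --- and hence NP-hardness in general, without passing through Problem~\ref{prob:verification} at all. If you want a self-contained proof of this corollary, that reduction (universe elements as self-looped states, one input actuating everything, outputs encoding the sets, and $\alpha=\gamma+1$) is the route to take.
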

From Corollary~\ref{cor:NP_irreducible} we also infer that Problem~\ref{prob:design} is NP-hard even when $\D(\bA)$ is irreducible. However, the complexity of Problem~\ref{prob:design}  is not straightforward for structurally cyclic systems, i.e., the class of systems whose state nodes are spanned by disjoint cycles.  In this section, we show that Problem~\ref{prob:design} is NP-hard for  structurally cyclic systems, while Problem~\ref{prob:verification}  is polynomial-time solvable for structurally cyclic systems (Lemma~\ref{lem:scc}).

The NP-hardness result of Problem~\ref{prob:design} for structurally cyclic systems is obtained using reduction from {\em minimum set multi-covering problem} (MSMC). The MSMC problem  is described in Problem~\ref{prob:MSMC} for the sake of completeness. We first define a {\em cover} in Definition~\ref{def:cover}.

\begin{definition}\label{def:cover}
 Given a set of $N$ elements $\U=\{1,\ldots,N\}$ referred to as {\em universe} and collection of $r$ {\em sets} $\P=\{\S_1,\ldots,\S_r\}$, with $\S_i \subseteq \U$, for all $i \in \{1,\ldots,r\}$, such that $\cup_{i=1}^r \S_i =\U$, a {\em cover} $\hat{\S} \subseteq \P$ satisfies $\cup_{\S_i \in \hat{\S}} \S_i = \U$.
 \end{definition}
 
\begin{problem}[MSMC problem $(\U,\P,\alpha)$]\label{prob:MSMC}
Given $\U=\{1,\ldots,N\}$, $\P=\{\S_1,\ldots,\S_r\}$ satisfying $\S_i \subseteq \U$, for all $i \in \{1,\ldots,r\}$, and $\cup_{i=1}^r \S_i =\U$, and a constant demand $\alpha$. The MSMC problem consists of finding a set of indices $\I^\* \subseteq \{1,\ldots,r\}$ corresponding to the minimum number of sets covering $\U$, where every element $i \in \U$ is covered at least $\alpha$ times, i.e., 
\[ \J^\* = \arg\min_{\J \subseteq \{1,\ldots,r\}} |\J|\] 
such that $\cup_{j \in \J} \S_j = \U ~and~ ~|\{j \in \J : i \in \S_j\}| \geqslant \alpha$.
\end{problem} 
A cover $\hat{\S}$ that is a feasible solution to Problem~\ref{prob:MSMC} is called as a multi-cover. To prove the NP-hardness of Problem~\ref{prob:design}, we now present a reduction of a general instance of the MSMC problem to an instance of Problem~\ref{prob:design}.
\begin{algorithm}[t]
  \caption{Psuedo-code for reducing the MSMC problem to an instance of Problem~\ref{prob:design} 
  \label{alg:reduc_from_multisetcover}}
  \begin{algorithmic}
  	\State \textit {\bf Input:} MSMC problem with universe $\U=\{1,\ldots,N\}$, sets $\P=\{S_1,\ldots,S_r\}$ and constant demand $\alpha$
	\State \textit{\bf Output:} Structured system $(\bA, \bB, \bC)$ and a positive number $\gamma$
  \end{algorithmic}
  \begin{algorithmic}[1]
  	\State Define $x_1,\ldots,x_N$, $y_1,\ldots,y_p$ and $u_1$ to be interconnected by the following definition of $(\bA, \bB, \bC)$:\label{step:state_inp_oup}
  	\State  $\bA_{ij} \leftarrow \begin{cases}
    \*$, for $i=j$ and $i, j \in \{1,\ldots,N\}, \\
    0, \mbox{~otherwise}.
  \end{cases} $ \label{step:state_conn}
  \State $\bB_{i1} \leftarrow \begin{cases}
    \*$, for $i \in \{1,\ldots,N\}, \\
    0, \mbox{~otherwise}.
  \end{cases} $ \label{step:inp_conn}
  \State $\bC_{ij} \leftarrow \begin{cases}
    \*$, for $j \in \S_i$ and $i \in \{1, \ldots, r\}, \\
    0, \mbox{~otherwise}.
  \end{cases} $ \label{step:oup_conn} 
  \State Given a solution $\bK$ to Problem~\ref{prob:design} on $(\bA,\bB,\bC)$ and $\gamma = \alpha -1$, define sets selected under $\bK$, $\S(\bK) \leftarrow \{\S_j:\bK_{1j} = \*\}$  \label{step:sol_to_msp}
  \end{algorithmic}
\end{algorithm}

\begin{figure}[t]
\centering
\begin{subfigure}[b]{0.45\textwidth}
\begin{eqnarray*} \label{eq:sys2}
\U &=& \{1,2,3,4,5\}\\
\S_1 & = & \{1,2\},~\S_2=\{2,3\},~\S_3=\{3,4,5\}\\
\bA &=& 
\left[\scalebox{1.05}{\mbox{$
\begin{smallmatrix}
\* & 0 & 0 & 0 & 0 \\
0 & \* & 0 & 0 & 0 \\
0 & 0 & \* & 0 & 0 \\
0 & 0 & 0 & \* & 0 \\
0 & 0 & 0 & 0 & \* 
\end{smallmatrix}
$}}
\right]\\
\bB &=& 
\left[\scalebox{1.05}{\mbox{$
\begin{smallmatrix}
\* \\
\* \\
\* \\
\* \\
\* 
\end{smallmatrix}
$}}
\right] \hspace*{-0.5 mm}, ~
\bC = 
\left[\scalebox{1.05}{\mbox{$
\begin{smallmatrix}
\* & \* & 0 & 0 & 0 \\
0 & \* & \* & 0 & 0 \\
0 & 0 & \* & \* & \* 
\end{smallmatrix}
$}}
\right]
\end{eqnarray*}
%\caption{structured system $(\bA, \bB, \bC)$}
\end{subfigure}~\hspace{0.2 mm}
\begin{subfigure}[b]{0.45\textwidth}
\centering
\begin{tikzpicture}[scale=0.65, ->,>=stealth',shorten >=1pt,auto,node distance=1.65cm, main node/.style={circle,draw,font=\scriptsize\bfseries}]
\definecolor{myblue}{RGB}{80,80,160}
\definecolor{almond}{rgb}{0.94, 0.87, 0.8}
\definecolor{bubblegum}{rgb}{0.99, 0.76, 0.8}
\definecolor{columbiablue}{rgb}{0.61, 0.87, 1.0}

  \fill[almond] (-1,-1.5) circle (7.0 pt);
  \fill[almond] (-2,-1.5) circle (7.0 pt);
  \fill[almond] (0,-1.5) circle (7.0 pt);
  \fill[almond] (1,-1.5) circle (7.0 pt);
  \fill[almond] (2,-1.5) circle (7.0 pt);
  \node at (-2,-1.5) {\scriptsize $x_1$};
  \node at (-1,-1.5) {\scriptsize $x_2$};
  \node at (0,-1.5) {\scriptsize $x_3$};
  \node at (1,-1.5) {\scriptsize $x_4$};
  \node at (2,-1.5) {\scriptsize $x_5$};

  \fill[bubblegum] (0,1) circle (7.0 pt);
   \node at (0,1) {\scriptsize $u_1$};
   
  \fill[columbiablue] (-1.5,-3) circle (7.0 pt);
  \fill[columbiablue] (-0.5,-3) circle (7.0 pt);
  \fill[columbiablue] (1.5,-3) circle (7.0 pt);
   \node at (-1.5,-3.0) {\scriptsize $y_1$};
   \node at (-0.5,-3.0) {\scriptsize $y_2$};
   \node at (1.5,-3.0) {\scriptsize $y_3$};
   
 \draw (0,0.75) -> (-2,-1.25);
 \draw (0,0.75) -> (-1,-1.25);
 \draw (0,0.75) -> (0,-1.25);
 \draw (0,0.75) -> (1,-1.25);
 \draw (0,0.75) -> (2,-1.25);
   
  \draw (-2,-1.75) -> (-1.5,-2.75);
  \draw (-1,-1.75) -> (-1.5,-2.75);
  \draw (-1,-1.75) -> (-0.5,-2.75);
  \draw (0,-1.75) -> (-0.5,-2.75);
  \draw (0,-1.75) -> (1.5,-2.75);
  \draw (1,-1.75) -> (1.5,-2.75);
  \draw (2,-1.75) -> (1.5,-2.75);

\path[every node/.style={font=\sffamily\small}]
 
(-2.05,-1.25) edge[loop above] (-2,-1)
(-1.05,-1.25)edge[loop above] (-1,-1)
(0.1,-1.25) edge[loop above]  (0,-1)
(1.0,-1.25) edge[loop above]  (1,-1)
(2.0,-1.25) edge[loop above]  (2,-1);
\end{tikzpicture}
\caption{$\D(\bA, \bB, \bC)$}
\label{fig:digraph_red}
\end{subfigure}
\caption{The structuted system $(\bA,\bB,\bC)$ and digraph $\D(\bA, \bB, \bC)$ for a given universe $\U$ and set $\P$ constructed using Algorithm~\ref{alg:reduc_from_multisetcover} is shown in the above figure.}
\label{fig:eg_multi_to_pb1}
%\vspace*{-5 mm}
\end{figure}

The pseudo-code showing a reduction of Problem~\ref{prob:design} to an instance of MSMC problem is given in   Algorithm~\ref{alg:reduc_from_multisetcover}.  Given a general instance of the MSMC problem consisting of universe $\U=\{1,\ldots,N\}$, sets $\P=\{\S_1,\ldots,\S_r\}$ and constant demand $\alpha$, we construct a structured system $(\bA,\bB,\bC)$ with states $x_1,\ldots,x_N$, input $u_1$, and outputs $y_1,\ldots,y_p$ (Step~\ref{step:state_inp_oup}). In $\bA$, every diagonal entry is $\*$ (Step~\ref{step:state_conn}). Thus the system is structurally cyclic. Moreover, $\B(\bA)$ has a perfect matching $M=\{(x'_i,x_i)$ for $i \in \{1,\ldots,N\} \}$. The input matrix $\bB$ consists of  a single input $u_1$ which is connected to every state node (Step~\ref{step:inp_conn}). The output matrix $\bC$ is constructed depending on $\P$ such that an output node $y_i$ senses all state nodes $x_j$'s that satisfy $j \in \S_i$ (Step~\ref{step:oup_conn}). An illustrative example demonstrating the  construction of the structured system $(\bA, \bB, \bC)$ for a given instance of the MSMC problem is given in Figure~\ref{fig:eg_multi_to_pb1}. Note that, the value of $\gamma$ for Problem~\ref{prob:verification} for the constructed structured system is uniquely defined by the value  of $\alpha$ of the corresponding MSMC problem.  Using the structured system $(\bA, \bB, \bC)$ constructed in Algorithm~\ref{alg:reduc_from_multisetcover}, we have the following result. 

\begin{lemma}\label{lem:lem_NP}
Consider the MSMC problem $(\U,\P,\alpha)$ and the structured system $(\bA,\bB,\bC)$ constructed using Algorithm~\ref{alg:reduc_from_multisetcover}. Then, $\bK$ is a solution to Problem~\ref{prob:design} if and only if the sets selected under $\bK$, $\S(\bK)$ is a solution to Problem~\ref{prob:MSMC}.
\end{lemma}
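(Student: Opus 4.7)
My plan is to exploit the very special structure of the system $(\bA,\bB,\bC)$ produced by Algorithm~\ref{alg:reduc_from_multisetcover} to collapse the resilience requirement of Problem~\ref{prob:design} to a purely combinatorial multi-cover condition, and then match the objectives. The first step is to observe that since Step~\ref{step:state_conn} puts $\*$ on every diagonal entry of $\bA$, each state node carries a self-loop in $\D(\bA,\bB,\bC,\bK^{\I})$, for \emph{every} choice of $\bK$ and \emph{every} removal set $\I$. The $N$ self-loops already form a node-disjoint union of cycles spanning $V_X$, so condition~b) of Proposition~\ref{prop:SFM1} holds automatically and can be discarded. The no-SFMs criterion on $(\bA,\bB,\bC,\bK^{\I})$ therefore reduces to condition~a) alone: every state node must lie in an SCC of $\D(\bA,\bB,\bC,\bK^{\I})$ that contains a feedback edge.

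Next I would translate condition~a) into a per-state existence requirement by inspecting the edges of $\D(\bA,\bB,\bC,\bK)$. By construction, the only non-loop edges are $u_1 \to x_j$ for every $j$, $x_j \to y_i$ whenever $j \in \S_i$, and $y_i \to u_1$ whenever $\bK_{1i} = \*$. Since $u_1$ always reaches every state in a single step, $x_j$ belongs to an SCC containing a feedback edge iff $x_j$ reaches $u_1$, and this happens exactly when some index $i$ satisfies $j \in \S_i$, $\bK_{1i} = \*$, and $(1,i) \notin \I$; the witnessing length-$3$ cycle $x_j \to y_i \to u_1 \to x_j$ then places $x_j$, $y_i$, $u_1$, and the feedback edge $(y_i,u_1)$ into a common SCC.

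The third step unpacks the ``any $\I$'' quantifier. Fix $\bK$ and set $\S(\bK) = \{\S_i : \bK_{1i} = \*\}$. For the no-SFMs property to survive every $\I$ with $|\I| \le \gamma = \alpha - 1$, each $j \in \U$ must still admit at least one surviving index $i$ with $j \in \S_i$ and $\bK_{1i} = \*$ after any $\alpha - 1$ feedback entries are deleted. Since the adversary is free to target exactly those feedback edges corresponding to a single element $j$, this is equivalent to demanding that $j$ belong to at least $\alpha$ sets of $\S(\bK)$, i.e., that $\S(\bK)$ multi-covers $\U$ with demand $\alpha$.

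Finally, the map $\bK \mapsto \S(\bK)$ is a bijection between $\{0,\*\}^{1 \times r}$ and sub-collections of $\P$, with $\|\bK\|_0 = |\S(\bK)|$. Combined with the preceding step, $\bK \in \K_\gamma$ iff $\S(\bK)$ is a feasible multi-cover, and the two objectives agree, so minimizers correspond on both sides, yielding the claimed equivalence. The main obstacle I foresee is the cycle-structure argument in the second step: one must carefully rule out that some longer cycle visiting multiple states could place $x_j$ in a feedback SCC even when no length-$3$ cycle at $x_j$ exists. The observation that $u_1$ reaches every state in one step closes this gap cleanly, because without such a length-$3$ cycle $x_j$ cannot reach $u_1$ at all, and hence cannot sit in any non-trivial SCC.
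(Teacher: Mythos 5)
Your proposal is correct and follows essentially the same route as the paper's proof: the self-loops from the diagonal $\bA$ dispose of condition~b) of Proposition~\ref{prop:SFM1}, the diagonal structure forces the SCC of $x_j$ to be determined entirely by the cycles $x_j \to y_i \to u_1 \to x_j$ through its own output connections, and the worst-case adversary targeting all feedback edges covering a single element $j$ yields the equivalence with demand $\alpha = \gamma+1$. The only differences are organizational: you phrase it as a direct chain of equivalences where the paper runs two contrapositive arguments, and your final remark about minimizers corresponding belongs to Theorem~\ref{th:prob1_NP} rather than to this lemma (which the paper reads as a statement about feasibility), but that is harmless.
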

\begin{proof}
\textbf{Only-if part:} Here we assume $\bK \in \K_\gamma$ and prove that $\S(\bK)$ covers each element in $\U$ at least $\alpha$ times, i.e.,  $\S(\bK)$ is a multi-cover of  the universe $\U$ satisfying demand $\alpha$. We prove this using contradiction. Suppose there exists an element $j \in \U$ that is not covered $\alpha$ times by $\S(\bK)$. Let $\S(\bK)$ consists of sets $\S_{i_1},\ldots,\S_{i_k}$ and the corresponding outputs are $y_{i_1},\ldots,y_{i_k}$. From the construction (Step~\ref{step:oup_conn} of Algorithm~\ref{alg:reduc_from_multisetcover}), an output node $y_i$ has incoming edges from state nodes $x_j$'s for all $j \in \S_i$. As element $j$ appears less than $\gamma+1$ times, since $\alpha = \gamma+1$, in the union of sets $\S_{i_1},\ldots,\S_{i_k}$, corresponding state node $x_j$ there are less than $\gamma+1$ outgoing edges to outputs.  Without loss generality, assume that corresponding state node $x_j$ there are  $\gamma$ outgoing edges to outputs. Notice that, as $\bA$ is a diagonal matrix $x_j$ is not connected to any other state node other than itself. Thus $x_j$ lies in an SCC with $\gamma$ feedback links. Then, the closed-loop system will have SFMs when  $\gamma$ feedback links fail. This contradicts the fact that $\bK$ is a solution to Problem~\ref{prob:design}. This completes the only-if part.

\noindent \textbf{If part:} Here we assume that $\S(\bK)$ is a solution to Problem~\ref{prob:MSMC} and prove that $\bK$ is a solution to Problem~\ref{prob:design}. Suppose not. From the construction, an output node $y_i$ has incoming edges from state nodes $x_j$'s for all $j \in \S_i$. Since $\B(\bA)$ has a perfect matching (as $\bA$ is diagonal), condition~b) of Proposition~\ref{prop:SFM1} is satisfied without using any feedback edge. Thus, the assumption $\bK \notin \K_\gamma$  implies that $\bK$ violates condition~a) in Proposition~\ref{prop:SFM1} after removing some subset of feedback links which has cardinality at most $\gamma$. Without loss of generality, assume that the subset of feedback links have cardinality $\gamma$. In other words, there exists a state node $x_j$  which does not lie in an SCC in $\D(\bA, \bB, \bC, \bK)$ with more than $\gamma$ feedback links. As $\bA$ is a diagonal matrix, state $x_j$ is not connected to any other state node and the feedback links corresponding to the SCC  in which  $x_j$ lies are those which correspond to the outputs connected to $x_j$. In $\D(\bA, \bB, \bC, \bK)$, $x_j$  has less than $\gamma+1$ outputs connected to it. This implies that the corresponding element $j \in \U$ lies in less than $\gamma+1$ sets in $\S(\bK)$. This is a contradiction to the assumption that $\S(\bK)$ is a multi-cover of universe $\U$ with demand $\alpha$, as $\alpha = \gamma+1$. This completes the proof.
\end{proof}

Next, we discuss the NP-hardness of Problem~\ref{prob:design} using a reduction from MSMC Problem. We show that any instance of MSMC Problem can be reduced to an instance of Problem~\ref{prob:design} such that an optimal solution to Problem~\ref{prob:design} gives an optimal solution to the MSMC Problem. 

\begin{theorem}\label{th:prob1_NP}
Consider the MSMC problem $(\U,\P,\alpha)$ and let $(\bA,\bB,\bC)$ be the structured system constructed using Algorithm~\ref{alg:reduc_from_multisetcover}. Let $\bK^{\*}$ be an optimal solution to Problem~\ref{prob:design} and $\S(\bK^{\*})$ be the set multi-cover corresponding to $\bK^{\*}$. Then, (i)~$\S(\bK^{\*})$ is feasible and an optimal solution to the MSMC problem, and
(ii)~Problem~\ref{prob:design} is NP-hard for structurally cyclic systems.
\end{theorem}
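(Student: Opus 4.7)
The plan is to leverage Lemma~\ref{lem:lem_NP}, which already establishes a solution-preserving correspondence between feasible feedbacks in $\K_\gamma$ and feasible multi-covers of $(\U,\P,\alpha)$, and to deduce both claims as short corollaries.

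For part (i), the first thing I would record is a bookkeeping fact: Algorithm~\ref{alg:reduc_from_multisetcover} builds a system with a single input $u_1$, so $\bK$ has exactly one row and $\|\bK\|_0$ equals the number of indices $j$ for which $\bK_{1j}=\*$, which by definition coincides with $|\S(\bK)|$. Feasibility of $\S(\bK^{\*})$ for the MSMC instance then follows immediately from the only-if direction of Lemma~\ref{lem:lem_NP}. For optimality I would argue by contradiction: suppose some multi-cover $\hat{\S}\subseteq\P$ satisfies $|\hat{\S}|<|\S(\bK^{\*})|$. Define $\hat{\bK}$ by $\hat{\bK}_{1j}=\*$ iff $\S_j\in\hat{\S}$, and $\hat{\bK}_{1j}=0$ otherwise. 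The if direction of Lemma~\ref{lem:lem_NP} gives $\hat{\bK}\in\K_\gamma$, while $\|\hat{\bK}\|_0=|\hat{\S}|<|\S(\bK^{\*})|=\|\bK^{\*}\|_0$, contradicting the optimality of $\bK^{\*}$.

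For part (ii), I would invoke a standard Karp-style reduction argument. Algorithm~\ref{alg:reduc_from_multisetcover} (with $\gamma=\alpha-1$) runs in time polynomial in the size of the input MSMC instance, and part~(i) shows that any optimum $\bK^{\*}$ of Problem~\ref{prob:design} on the constructed system yields an optimum MSMC multi-cover via the trivial map $\bK^{\*}\mapsto\S(\bK^{\*})$. Since MSMC is NP-hard---it specializes to classical set cover when $\alpha=1$---a polynomial-time algorithm for Problem~\ref{prob:design} would imply a polynomial-time algorithm for MSMC. Finally, I would note that the $\bA$ produced by the reduction is diagonal, so its state digraph is a disjoint union of $N$ self-loops; in particular all state nodes are already spanned by a disjoint set of cycles, so the reduced instance lies in the structurally cyclic class. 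Hence the hardness descends to that restricted class, as claimed.

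Essentially no real obstacle remains at this level: the substantive content has been absorbed into Lemma~\ref{lem:lem_NP}. The two points I would check with care are (a) the identity $\|\bK\|_0=|\S(\bK)|$, which relies on the single-input construction and would need revisiting if $m>1$; and (b) that NP-hardness of MSMC transfers through the regime $\alpha=\gamma+1\geq 1$ exploited by the reduction, which is immediate since already the $\alpha=1$ subcase is set cover.
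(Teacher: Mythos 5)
Your proposal is correct and follows essentially the same route as the paper: feasibility and optimality of $\S(\bK^{\*})$ via Lemma~\ref{lem:lem_NP} together with the identity $\norm[\bK]_0=|\S(\bK)|$ and a contradiction argument, and NP-hardness via the polynomial-time reduction of Algorithm~\ref{alg:reduc_from_multisetcover} combined with the observation that the diagonal $\bA$ makes the constructed system structurally cyclic. No gaps.
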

\begin{proof}
{\bf (i)}~Given a general instance of the MSMC problem $(\U,\P,\alpha)$, we reduce it to an instance of Problem~\ref{prob:design} using Algorithm~\ref{alg:reduc_from_multisetcover}. Let $\bK$ be a feasible solution to Problem~\ref{prob:design}. Using Lemma~\ref{lem:lem_NP} the sets selected under $\bK$, i.e., $\S(\bK)$, covers each element in $\U$ at least $\alpha$ times. Hence $\S(\bK)$ is a feasible solution to set multi-covering problem. Next, we prove optimality. 

Let $\bK^{\*}$ be an optimal solution to Problem~\ref{prob:design}. Using Lemma~\ref{lem:lem_NP}, $\S(\bK^{\*})$ is a feasible solution to set multi-covering problem. From Step~\ref{step:sol_to_msp} of Algorithm~\ref{alg:reduc_from_multisetcover} we know that $\norm[\bK^{\*}]_0=|\S(\bK^{\*})|$, where $|D|$ denoted cardinality of set $D$.  Using contradiction, we now prove that $\S(\bK^{\*})$ is an optimal solution to the MSMC problem. Assume $\S(\bK^{\*})$ is not an optimal solution. Then there exists a solution $\widetilde{\S} \subseteq \P$ such that $|\widetilde{\S}| < |\S(\bK^{\*})|$ and satisfies $\cup_{\S_i \in \widetilde{\S}} = \U$ and each element in $\U$ is covered at least $\alpha$ times. With respect to $\widetilde{\S}$, define $\widetilde{\bK}$, where $\widetilde{\bK}_{ij}=\*$ if $\S_j \in \widetilde{\S}$. 
 From Lemma~\ref{lem:lem_NP}, $\tilde{\bK}$ is a solution to Problem~\ref{prob:design}. Further, $|\widetilde{\S}| < |\S(\bK^{\*})|$ implies $\norm[\widetilde{\bK}]_0 < \norm[\bK^{\*}]_0$. This is a contradiction as $\bk^{\*}$ is an optimal solution to Problem~\ref{prob:design}. This proves that $\S(\bK^\*)$ is an optimal solution to Problem~\ref{prob:design}.

\noindent{\bf (ii)}~To prove (ii), we first give the computational complexity of Algorithm~\ref{alg:reduc_from_multisetcover}. Steps~\ref{step:state_conn}, \ref{step:inp_conn} and \ref{step:sol_to_msp} has complexity $O(N)$, and Step~\ref{step:oup_conn} has complexity $O(Nr)$. Here, $N=n$ and $r=p=O(n)$. Hence total complexity of Algorithm~\ref{alg:reduc_from_multisetcover} is $O(n^2)$. Thus Algorithm~\ref{alg:reduc_from_multisetcover} gives a polynomial time reduction of minimum set multi-covering problem to an instance of Problem~\ref{prob:design} such that optimal solution to Problem~\ref{prob:design} gives an optimal solution to MSMC problem (from Theorem~\ref{th:prob1_NP}). Moreover, the structured system $\bA$ constructed in Algorithm~\ref{alg:reduc_from_multisetcover} is structurally cyclic (all state nodes are  spanned by disjoint cycles). Hence Problem~\ref{prob:design} is NP-hard for structurally cyclic systems. This completes the proof.
\end{proof}
Now we present the inapproximability\footnote{A $\Delta$-optimal approximation algorithm is an algorithm whose solution value is at most $\Delta$ times that of the actual optimum value.} result of Problem~\ref{prob:design}. 
\begin{theorem}\label{th:approxi_NP_sol}
Consider the MSMC problem $(\U,\P,\alpha)$ and the structured system $(\bA,\bB,\bC)$  constructed using Algorithm~\ref{alg:reduc_from_multisetcover}. Then, (i)~for any $\epsilon \geqslant 1$, if there exists a $\epsilon$-optimal solution to Problem~\ref{prob:design}, then there exists a $\epsilon$-optimal solution to the MSMC problem, (ii)~Problem~\ref{prob:design} is inapproximable to $(1-o(1))\,$ $\log~n$, where $n$ denotes the system dimension.
\end{theorem}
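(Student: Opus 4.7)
The plan is to leverage the reduction already built in Algorithm~\ref{alg:reduc_from_multisetcover} and the solution-preserving correspondence of Lemma~\ref{lem:lem_NP} to carry out an approximation-preserving reduction from MSMC, and then invoke the known logarithmic inapproximability of set cover (Feige~1998; Dinur--Steurer~2014) which extends to MSMC.

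For part~(i), I would begin by letting $\bK$ be an $\epsilon$-optimal solution to Problem~\ref{prob:design} on the instance $(\bA,\bB,\bC)$ produced by Algorithm~\ref{alg:reduc_from_multisetcover}, so that $\|\bK\|_0 \leqslant \epsilon \,\|\bK^\*\|_0$ where $\bK^\*$ is an optimum. By Lemma~\ref{lem:lem_NP} applied to $\bK$, the collection $\S(\bK)$ is a feasible multi-cover for $(\U,\P,\alpha)$. The key identity is the one produced in Step~\ref{step:sol_to_msp} of Algorithm~\ref{alg:reduc_from_multisetcover}: $\|\bK\|_0=|\S(\bK)|$ (each non-zero in the single row of $\bK$ selects exactly one set). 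Combining this with Theorem~\ref{th:prob1_NP}(i), which gives that the optima of the two problems agree, i.e.\ $\|\bK^\*\|_0=|\S(\bK^\*)|$ equals the MSMC optimum, yields $|\S(\bK)|\leqslant \epsilon\cdot \mathrm{OPT}_{\mathrm{MSMC}}$, proving the transfer of $\epsilon$-optimality.

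For part~(ii), I would argue by contradiction: suppose there exists a polynomial-time algorithm that computes a $(1-o(1))\log n$-approximation for Problem~\ref{prob:design}. Given any instance $(\U,\P,\alpha)$ of MSMC with $|\U|=N$, Algorithm~\ref{alg:reduc_from_multisetcover} constructs in polynomial time a structurally cyclic system with exactly $n=N$ state nodes (Step~\ref{step:state_inp_oup}), together with $\gamma=\alpha-1$. Applying the supposed approximation algorithm to this instance and passing the output through part~(i) produces a multi-cover of value at most $(1-o(1))\log N$ times the MSMC optimum in polynomial time. Since set cover is the special case of MSMC with $\alpha=1$, and the classical result of Dinur--Steurer shows set cover cannot be approximated within $(1-o(1))\log N$ unless $\mathrm{P}=\mathrm{NP}$, this is a contradiction; the same inapproximability threshold is known to carry over to MSMC for any fixed demand.

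The main delicate point is ensuring that the reduction is strictly approximation-preserving with no loss in the $\log n$ factor. This requires that the dimension $n$ of the constructed system equals the universe size $N$ of the MSMC instance, which is exactly the choice made in Step~\ref{step:state_inp_oup} (one state per element of $\U$), and that $\|\bK\|_0$ counts precisely the selected sets, which holds because $\bB$ has a single input and thus $\bK$ is a single row whose non-zero pattern is in bijection with $\S(\bK)$. Everything else is a direct invocation of the already-established Lemma~\ref{lem:lem_NP} and Theorem~\ref{th:prob1_NP} together with the cited hardness of approximation for set cover.
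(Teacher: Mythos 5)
Your proposal is correct and follows essentially the same route as the paper: part~(i) via the identity $\norm[\bK]_0=|\S(\bK)|$ from Step~\ref{step:sol_to_msp} combined with Theorem~\ref{th:prob1_NP}(i), and part~(ii) by transferring the $(1-o(1))\log n$ hardness of set cover (the $\alpha=1$ case of MSMC) through the approximation-preserving reduction. Your explicit check that $n=N$ so the logarithmic factor is not diluted is a detail the paper leaves implicit, but it is the same argument.
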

\begin{proof}
{\bf (i)}~Recall that $\bK^\*$ is an optimal solution to Problem~\ref{prob:design} and let  $\S^\*$ be an optimal solution to Problem~\ref{prob:MSMC}. To prove~(i), we need to show that if $\bK'\in \K_\gamma$ and $\norm[\bK']_0 \leqslant \epsilon\,\norm[\bK^{\*}]_0$, then $|\S(\bK')| \leqslant \epsilon\,|\S^\*|$ . Note that $\norm[\bK']_0 = |\S(\bK')|$ and $\norm[\bK^\*]_0 = |\S(\bK^\*)|$ (see Step~\ref{step:sol_to_msp}). Thus $|\S(\bK')| \leqslant \epsilon\,|\S(\bK^\*)|$. By Theorem~\ref{th:prob1_NP}~(i), $\S(\bK^\*)$ is an optimal solution to Problem~\ref{prob:MSMC}. This implies $|\S(\bK^\*)| = |\S^\*|$. Thus  $|\S(\bK')| \leqslant \epsilon\,|\S^\*|$. 

\noindent{\bf(ii)}~From Theorem~\ref{th:approxi_NP_sol}~(i),  for any $\epsilon \geqslant 1$, if there exists an $\epsilon$-optimal solution to Problem~\ref{prob:design} then there exists an $\epsilon$-optimal solution to the MSMC problem.  The MSMC problem is inapproximable to factor $(1-o(1))\,\log\,n$, as the minimum set cover problem \cite{Chv:79}, \cite{Fei:98} is a special case of MSMC problem for $\alpha=1$. Thus Problem~\ref{prob:design} is inapproximable to $(1-o(1))\,\log\,n$.
This completes the proof. 
\end{proof}

 We conclude that Problem~\ref{prob:design} is NP-hard and also inapproximable to factor $(1-o(1))\,\log\,n$ for general systems as well as structurally cyclic systems.  Table~\ref{tb:NP} summarizes the complexity results obtained in this paper for different cases of Problem~\ref{prob:verification} and Problem~\ref{prob:design}.
\begin{table*}[h] 
\centering
\captionof{table}{Algorithmic complexity results of the optimal feedback selection problem}\label{tb:NP}
\begin{tabular}{|l|c|c|c|}
\hline
{\diagbox[width=12em]{Graph topology}{Problem}}
 & { Problem~\ref{prob:verification}~} & {Problem~\ref{prob:design}~}   \\
\cline{2-3}
\hline
\multicolumn{1}{|l|}{\vphantom{$9^{9^9}$} Irreducible $\D(\bA)$} &  \shortstack{\phantom{$8^{7^7}$}NP-complete}~ & \shortstack{NP-hard}    \\
\hline
\multicolumn{1}{|l|}{\vphantom{$6^{6^6}$}Structurally cyclic systems} &  \shortstack{\vphantom{$9^{9^9}$}P}~ & \shortstack{ NP-hard}  \\
\hline
\end{tabular}
\end{table*}
In the next section, we provide an approximation algorithm to solve Problem~\ref{prob:design} for a special graph topology of practical importance.
%%%%%%%%%%%%%%%%%%%%%%%%%%%%%%%%%%%%%%%%%%%%%%%

\section{Approximation Algorithm for Sparsest Resilient Feedback  Design Problem for Back-Edge Feedback Structure}\label{sec:algo_design}
In this section, we propose an {\em order optimal}, $O(\log\,n)$,  approximation algorithm for the sparsest  resilient feedback design problem for structurally cyclic systems with a special feedback structure so-called {\em back-edge} feedback structure.
We assume that the feedback matrix satisfies a structural constraint that all feedback edges $(y_j,u_i)'$s are such that there exists a directed path from input $u_i$ to output $y_j$ in $\D(\bA,\bB,\bC)$. In other words,  an output from a state is fed back to an input which can directly or indirectly influence the state associated with that output.  We refer to a feedback structure that satisfies this constraint  as  {\em back-edge} feedback structure.   Back-edge feedback structure is applicable in various networks that include hierarchical networks and multi-agent systems in which the state measurement is fed back to the leader agent. The hierarchical network structure is common in real-life networks \cite{LiuYanSlo:12}. A power distribution system follows a hierarchical network structure and finding an optimal resilient control strategy aims towards designing a least cost feedback pattern to  maintain the system parameters such as voltages and frequency at different layers of the network at specified levels  even under adversarial conditions \cite{FegPer:77}, \cite{Mar:07}. There is a wide class of practically important systems called  {\it self-damped} systems \cite{ChaMes:13} that are structurally cyclic, for example, consensus dynamics in multi-agent systems and epidemic dynamics. 

For structurally cyclic systems with back-edge feedback structure, we propose a polynomial time algorithm to find an approximate solution to Problem~\ref{prob:design} with an optimal approximation ratio. We describe below the graph topology considered in this section.

Consider  a digraph $\D_{\G} = (V_{\G},E_{\G})$ and let  $v_i \in V_{\G}$ and $v_j \in V_{\G}$ be such that there exists a directed path from $v_i$ to $v_j$. Then  $v_i$ is referred to as an {\em ancestor} of $v_j$ and $v_j$ as a {\em descendant} of $v_i$. Now we give the following assumption.

\begin{assume}\label{asm:special_graph_topo}
Consider a structured system $(\bA, \bB, \bC)$. The feedback matrices for $(\bA, \bB, \bC)$ are constrained in such a way that a feedback edge $(y_j, u_i)$ is feasible only if $u_i$ is an ancestor of $y_j$  in the digraph $\D(\bA, \bB,\bC)$.
\end{assume}

Consider the structured system $(\bA, \bB, \bC)$ constructed using Algorithm~\ref{alg:reduc_from_multisetcover}. The feedback matrix for the constructed $(\bA, \bB, \bC)$ satisfies Assumption~\ref{asm:special_graph_topo}, i.e., feedback edge $(y_j, u_i)$ is feasible only if $u_i$ is an ancestor of $y_j$  in the digraph $\D(\bA, \bB,\bC)$. Moreover, all the state nodes in the  structured system are spanned by disjoint union of cycles and hence the system is structurally cyclic. A schematic diagram that shows this special feedback structure is given in Figure~\ref{fig:eg_multi_to_pb1}.  The following theorem is an immediate consequence of Theorem~\ref{th:prob1_NP}.

\begin{cor}\label{cor:NP}
Consider a structurally cyclic  structured system $(\bA, \bB, \bC)$. Let Assumption~\ref{asm:special_graph_topo} holds. Then, Problem~\ref{prob:design} is NP-hard for structured systems with this  graph topology.
\end{cor}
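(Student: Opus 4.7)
The plan is to observe that the reduction already used to prove Theorem~\ref{th:prob1_NP} happens to produce an instance that also satisfies Assumption~\ref{asm:special_graph_topo}. So the proof should simply be a verification that the output of Algorithm~\ref{alg:reduc_from_multisetcover} is a feasible instance of the restricted problem considered in this corollary, and then an invocation of the NP-hardness of MSMC.

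Concretely, I would first recall the construction: $\bA$ is diagonal with every diagonal entry $\*$, there is a single input $u_1$ with $\bB_{i1}=\*$ for all $i\in\{1,\dots,N\}$, and outputs $y_i$ with $\bC_{ij}=\*$ iff $j\in\S_i$. I then need to check two things for the constructed system $(\bA,\bB,\bC)$: (i) it is structurally cyclic, which is immediate because the self-loops on every state node give a disjoint union of $1$-cycles that spans $V_X$; and (ii) the only feasible feedback edges are of back-edge type, i.e.\ for every $(y_i,u_1)$ that can appear in a $\bK$ the input $u_1$ is an ancestor of $y_i$ in $\D(\bA,\bB,\bC)$. For (ii), note that $u_1$ has an edge to every state $x_j$, and every output $y_i$ has an incoming edge from some $x_j$ with $j\in\S_i$; since $\S_i\neq\emptyset$ (without loss of generality), there is always a directed path $u_1\to x_j\to y_i$, so every output is a descendant of $u_1$ and Assumption~\ref{asm:special_graph_topo} holds vacuously on all possible feedback edges.

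Having verified that Algorithm~\ref{alg:reduc_from_multisetcover} outputs an instance lying in the restricted class of Corollary~\ref{cor:NP}, I would then invoke Theorem~\ref{th:prob1_NP}: an optimal solution to Problem~\ref{prob:design} on that instance yields, via $\S(\bK^{\*})$, an optimal solution to the MSMC problem. Since MSMC is NP-hard (it generalises minimum set cover, the case $\alpha=1$), and the reduction is polynomial (Algorithm~\ref{alg:reduc_from_multisetcover} runs in $O(n^2)$ time by the complexity analysis already carried out in the proof of Theorem~\ref{th:prob1_NP}(ii)), NP-hardness of Problem~\ref{prob:design} transfers to the restricted subclass.

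There is essentially no obstacle here; the only thing to be careful about is making the back-edge verification rigorous rather than hand-wavy, and noting explicitly that restricting the class of admissible feedback matrices does not invalidate the reduction because \emph{every} feedback entry used in the constructed instance is already a back-edge, so the feasible sets $\K_\gamma$ for Problem~\ref{prob:design} in the general setting and under Assumption~\ref{asm:special_graph_topo} coincide on this instance. Hence the optimal values coincide, and the NP-hardness follows.
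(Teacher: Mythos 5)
Your proposal is correct and follows essentially the same route as the paper: the text preceding the corollary observes that the instance produced by Algorithm~\ref{alg:reduc_from_multisetcover} is structurally cyclic and that all its feasible feedback edges satisfy Assumption~\ref{asm:special_graph_topo}, and then declares the corollary an immediate consequence of Theorem~\ref{th:prob1_NP}. Your write-up merely makes explicit the two verifications (self-loops spanning $V_X$, and $u_1$ being an ancestor of every $y_i$) that the paper states without detail, which is a faithful and slightly more rigorous rendering of the same argument.
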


Now we propose an approximation algorithm for solving Problem~\ref{prob:design} for a structured system $(\bA, \bB, \bC)$ under Assumption~\ref{asm:special_graph_topo}.  First, we propose an algorithm to reduce the general instance of Problem~\ref{prob:design} for a structurally system satisfying Assumption~\ref{asm:special_graph_topo} to an instance of minimum set multi-covering problem.

\begin{algorithm}[t]
  \caption{Psuedo-code for reducing Problem~\ref{prob:design} to an instance of MSMC problem
  \label{alg:reduc_from_prob1}}
  \begin{algorithmic}
  	\State \textit {\bf Input:} Structured system $(\bA, \bB, \bC)$ and a constant $\gamma$
	\State \textit{\bf Output:}  MSMC problem $(\U,\P,\alpha)$
  \end{algorithmic}
  \begin{algorithmic}[1]
  	\State Let $\{x_1,\ldots,x_n\}$ be the state nodes, $\{u_1,\ldots,u_m\}$ be input nodes, and $\{y_1,\ldots,y_p\}$ be output nodes \label{step:state_inp_oup_2}
  	\State Define $\bK:=\{\bK_{ij}=\*: u_i$ is ancestor of $y_j$ in $\D(\bA,\bB,\bC)\}$\label{step:bK_def}
  	\State Let $E_K$ be the set of feedback edges corresponding to $\bK$ \label{step:Ek_def}
  	\State Define universe $\U \leftarrow \{x_1,\ldots,x_n\}$ \label{step:univ_def}
  \State Define set $\P~\leftarrow~\{\S_1,\ldots,\S_{|E_K|}\}$ \label{step:P_sets_def}
  \For{$e_d = (y_j,u_i) \in E_K$}
    \State $\S_d~:=~\{x_a: x_a$ lies in some directed path  from $u_i$ to $y_j\}$ \label{step:sets_def}
 \EndFor\label{step:endfor}
    \State $\alpha \leftarrow \gamma+1$
 \State Given a solution $\S'$ to the MSMC problem for  $(\U,\P,\alpha)$, define feedback matrix selected under $\S'$, $\bK(\S'):= \{\bK(\S')_{ij}=\*:\S_d \in \S'~{\rm and}~e_d=(y_j,u_i)\}$ \label{step:multi_sol}
  \end{algorithmic}
\end{algorithm}

Given a general instance of the structured system $(\bA,\bB,\bC)$ satisfying Assumption~\ref{asm:special_graph_topo} and a constant $\gamma$, we construct an instance of the MSMC problem $(\U,\P,\alpha)$ using Algorithm~\ref{alg:reduc_from_prob1}. Let $\{x_1,\ldots,x_n\}$ be the state nodes, $\{u_1,\ldots,u_m\}$ be the input nodes, and $\{y_1,\ldots,y_p\}$ be the output nodes of the structured system (Step~\ref{step:state_inp_oup_2}).  Define $\bK$ such that $\bK_{ij}=\*$ only if there exists a directed path from $u_i$ to $y_j$ in $\D(\bA, \bB, \bC)$ (Step~\ref{step:bK_def}).   Thus the edge set $E_K$  corresponding to $\bK$ consists of all feedback edges  that satisfy Assumption~\ref{asm:special_graph_topo} (Step~\ref{step:Ek_def}).
Now we define the universe $\U$ such that it contains all state nodes $\{x_1,\ldots,x_n\}$ (Step~\ref{step:univ_def}). The set $\P$ consists of sets $\S_1,\ldots,\S_{|E_K|}$, where set $\S_d$ corresponds to a feedback edge  $e_d = (y_j, u_i)$ (Step~\ref{step:P_sets_def}). The set $\S_d$ consists of those elements in the universe that corresponds to state nodes  in $\D(\bA, \bB, \bC)$ that lie in some directed path from $u_i$ to $y_j$ (Step~\ref{step:sets_def}). Note that there may be multiple paths from $u_i$ to $y_j$ as shown in Figure~\ref{fig:eg_multi_to_pb2}. For a solution $\S'$ to the MSMC problem for $(\U,\P,\alpha)$, we define feedback matrix $\bK(\S')$ (Step~\ref{step:multi_sol}). Here $\bK(\S')$ consists of all those feedback edges that correspond to sets in $\S'$ under the definition given in Step~\ref{step:sets_def}. An illustrative example demonstrating the  construction of MSMC problem  for a given instance of a structured system $(\bA, \bB, \bC)$ is given in Figure~\ref{fig:eg_multi_to_pb2}.

\begin{figure}[t]
\centering
\begin{subfigure}[b]{0.45\textwidth}
\centering
\begin{tikzpicture}[scale=0.65, ->,>=stealth',shorten >=1pt,auto,node distance=1.65cm, main node/.style={circle,draw,font=\scriptsize\bfseries}]
\definecolor{myblue}{RGB}{80,80,160}
\definecolor{almond}{rgb}{0.94, 0.87, 0.8}
\definecolor{bubblegum}{rgb}{0.99, 0.76, 0.8}
\definecolor{columbiablue}{rgb}{0.61, 0.87, 1.0}

\fill[bubblegum] (-1,1) circle (7.0 pt);
\fill[bubblegum] (0,1) circle (7.0 pt);
\fill[bubblegum] (1,1) circle (7.0 pt);
\node at (-1,1) {\scriptsize $u_1$};
\node at (0,1) {\scriptsize $u_2$};
\node at (1,1) {\scriptsize $u_3$};  

\fill[almond] (-1,-0.5) circle (7.0 pt);
\fill[almond] (0,-0.5) circle (7.0 pt);
\fill[almond] (1,-0.5) circle (7.0 pt);
\node at (-1,-0.5) {\scriptsize $x_1$};
\node at (0,-0.5) {\scriptsize $x_3$};
\node at (1,-0.5) {\scriptsize $x_5$};
  
\fill[almond] (-1,-2) circle (7.0 pt);
\fill[almond] (0,-2) circle (7.0 pt);
\fill[almond] (1,-2) circle (7.0 pt);
\node at (-1,-2) {\scriptsize $x_2$};
\node at (0,-2) {\scriptsize $x_4$};
\node at (1,-2) {\scriptsize $x_6$};

%\fill[almond] (1,-3.5) circle (7.0 pt);
%\node at (1,-3.5) {\scriptsize $x_7$};

\fill[columbiablue] (-1,-3.5) circle (7.0 pt);
\fill[columbiablue] (0,-3.5) circle (7.0 pt);
\fill[columbiablue] (1,-3.5) circle (7.0 pt);
\node at (-1,-3.5) {\scriptsize $y_1$};
\node at (0,-3.5) {\scriptsize $y_2$};
\node at (1,-3.5) {\scriptsize $y_3$};

\draw (-1,0.75) -> (-1,-0.25);
\draw (0,0.75) -> (0,-0.25);
\draw (1,0.75) -> (1,-0.25);

\draw (-1,-0.75) -> (-1,-1.75);
\draw (0,-0.75) -> (0,-1.75);
\draw (1,-0.75) -> (1,-1.75);

\draw (1,-2.25) -> (1,-3.25); 
\draw (0.25,-2) -> (0.75,-2); %x4 to x6

\draw (-1,-2.25) -> (-1,-3.25);
\draw (0,-2.25) -> (0,-3.25);
\draw (1,-2.25) -> (1,-3.25);
\draw (0,-0.75) -> (-1,-3.25);
\draw (0,0.75) -> (-1,-1.75);

\path[every node/.style={font=\sffamily\small}]
(-0.9,-0.25)edge[loop above] (-1,-1)
(0.1,-0.25)edge[loop above] (0,-1)
(1.1,-0.25)edge[loop above] (1,-1)
(-0.9,-1.75)edge[loop above] (-1,-2.5)
(0.1,-1.75)edge[loop above] (0,-2.5)
(1.1,-1.75)edge[loop above] (1,-2.5);
\end{tikzpicture}
\caption{$\D(\bA, \bB, \bC)$}
\label{fig:digraph_r}
\end{subfigure}
\begin{subfigure}[b]{0.45\textwidth}
\begin{eqnarray*} \label{eq:sysone}
 \U &=& \{x_1,x_2,x_3,x_4,x_5,x_6\},\\
\S_1 &=& \{x_1,x_2\},~\S_2=\{x_3\},\\
\S_3 &=& \{x_3,x_4\},~\S_4=\{x_5,x_6\},\\
 \S_5 &=& \{x_3,x_4,x_6\},~\S_6=\{x_2\}
\end{eqnarray*}
 \caption{MSMC problem}
\end{subfigure}~\hspace{0.2 mm}
\caption{The MSMC problem instance $(\U,\P,\alpha)$ constructed for a structured system digraph $\D(\bA, \bB, \bC)$ using Algorithm~\ref{alg:reduc_from_prob1} is shown in the figure~(a) and (b).}
\label{fig:eg_multi_to_pb2}
\end{figure} 

\begin{lemma}\label{lem:lem1_multi_approx}
Consider a structurally cyclic structured system $(\bA,\bB,\bC)$   and a constant $\gamma$. Let Assumption~\ref{asm:special_graph_topo} holds and $(\U,\P,\alpha)$ be the MSMC problem constructed using Algorithm~\ref{alg:reduc_from_prob1}. Then, $\S'$ is a solution to the MSMC problem if and only if the feedback matrix $\bK(\S')$ selected under $\S'$ is a solution to Problem~\ref{prob:design}.
\end{lemma}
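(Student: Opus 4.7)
The plan is to establish the equivalence by first proving a structural lemma linking SCC membership to path coverage, and then using this to translate the no-SFMs resilience condition directly into the multi-cover condition. Before attacking either direction, I would record the crucial observation that the system being structurally cyclic implies condition~b) of Proposition~\ref{prop:SFM1} is satisfied in $\D(\bA)$ itself via the disjoint cycle cover of the states; since this cover uses only state edges, it remains intact under \emph{any} choice or deletion of feedback edges. Consequently, verifying no-SFMs of $(\bA,\bB,\bC,\bK(\S')^{\I})$ for every $\I$ with $|\I|\leqslant\gamma$ reduces to checking only condition~a) of Proposition~\ref{prop:SFM1}, namely that every state node lies in an SCC containing at least one surviving feedback edge.

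Next I would prove the structural lemma: in $\D(\bA,\bB,\bC,\bK(\S'))$, a state node $x_a$ lies in an SCC containing the feedback edge $e_d=(y_j,u_i)$ if and only if $x_a \in \S_d$. For the ``only if'' direction, SCC membership yields directed paths $u_i\to\cdots\to x_a$ and $x_a\to\cdots\to y_j$ in the digraph; concatenating these shows $x_a$ lies on some directed $u_i$-to-$y_j$ path, which is exactly how $\S_d$ is defined in Step~\ref{step:sets_def}. For the ``if'' direction, any such path concatenated with the feedback edge $(y_j,u_i)$ forms a closed walk through $x_a$ and $e_d$, placing them in a common SCC. Assumption~\ref{asm:special_graph_topo} is essential here: without the back-edge constraint a feedback edge $(y_j,u_i)$ need not lie on any cycle at all, and the correspondence between SCC membership and path coverage would fail.

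Armed with the structural lemma, both directions of the equivalence follow by counting. For the only-if direction, suppose $\S'$ solves $(\U,\P,\alpha)$ with $\alpha=\gamma+1$; then every $x_a\in\U$ belongs to at least $\gamma+1$ sets $\S_d\in\S'$, i.e., lies in an SCC containing at least $\gamma+1$ feedback edges of $\bK(\S')$. Deletion of any $\I$ with $|\I|\leqslant\gamma$ therefore leaves at least one feedback edge $e_d\in E_K\setminus\I$ with $x_a\in\S_d$, so $x_a$ still lies in an SCC containing a feedback edge, securing condition~a) and hence no-SFMs by Proposition~\ref{prop:SFM1}. For the converse, if some $x_a$ were covered fewer than $\gamma+1$ times, one could choose $\I$ to be precisely the feedback edges $e_d$ with $x_a\in\S_d$ (of size at most $\gamma$), orphaning $x_a$ from every remaining feedback edge and violating condition~a), contradicting that $\bK(\S')\in\K_\gamma$. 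The bijection in Step~\ref{step:multi_sol} between edges of $\bK(\S')$ and elements of $\S'$ closes the argument.

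The main obstacle is the structural lemma; everything else is a clean counting translation once that bridge is in place. In particular, verifying that ``SCC-with-feedback-edge'' really reduces to a simple ancestor/descendant reachability condition on $\D(\bA,\bB,\bC)$ requires both the structurally cyclic assumption (so self-sustaining cycles at each state node are available) and the back-edge Assumption~\ref{asm:special_graph_topo} (so each feedback edge is guaranteed to close a cycle). Dropping either hypothesis would spoil the one-to-one correspondence between the no-SFMs resilience constraint and the multi-cover constraint.
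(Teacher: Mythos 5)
Your proposal follows essentially the same route as the paper's proof: use structural cyclicity to discharge condition~b) of Proposition~\ref{prop:SFM1}, translate ``$x_a$ lies in an SCC containing the feedback edge $e_d$'' into ``$x_a\in\S_d$'', and close with the counting argument via $\alpha=\gamma+1$. One caveat: the ``only if'' direction of your structural lemma is justified by concatenating the two SCC paths $u_i\to\cdots\to x_a$ and $x_a\to\cdots\to y_j$, but those paths live in the closed-loop digraph $\D(\bA,\bB,\bC,\bK(\S'))$ and may traverse \emph{other} feedback edges (and the concatenation is only a walk, not necessarily a simple path), whereas $\S_d$ is defined through directed paths in the open-loop digraph $\D(\bA,\bB,\bC)$ --- so that implication, which your converse direction leans on when you ``orphan'' $x_a$, does not follow as written. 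This is, however, precisely the step the paper's own if-part asserts without further justification when it passes from ``$x_q$ is covered fewer than $\gamma+1$ times'' to ``$x_q$ lies in an SCC with fewer than $\gamma+1$ feedback links,'' so your argument matches the paper's level of rigor rather than diverging from it.
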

\begin{proof}
As the system is structurally cyclic condition~b) in Proposition~\ref{prop:SFM1} is satisfied without using any feedback edges and only condition~a) has to be satisfied.

\textbf{Only-if part:} Here we assume that $\S'$ is a solution to the MSMC problem and prove that $\bK(\S')$ is a solution to Problem~\ref{prob:design}. We prove this using contradiction. Assume that $\bK(\S')$ is not a solution to Problem~\ref{prob:design}. Since $\B(\bA)$ has a perfect matching, condition~b) of Proposition~\ref{prop:SFM1} is satisfied without using any feedback edge. Thus $\bK(\S')$ must violate condition~a) after removing some $\gamma$ feedback links. There exists a state node, say $x_q$, that lies in an SCC in $\D(\bA, \bB, \bC, \bK(\S'))$ with less than $\gamma+1$ feedback links. Notice that $\bK(\S')$ consists of feedback edges corresponding to all the sets in $\S'$. From the construction (Step~\ref{step:sets_def}), set $\S_d$ consists of all state nodes which lie in some directed path from $u_i$ to $y_j$ in $\D(\bA, \bB, \bC)$.   Thus in $\D(\bA, \bB, \bC)$ there are less than $\gamma$ different directed paths from some input node to some output node through $x_q$. Hence the element $x_q$ is covered less than $\alpha$ times by the cover $\S'$ as $\alpha=\gamma+1$. This is a contradiction as $\S'$ is a solution to the MSMC problem. This completes the only-if part.

\noindent \textbf{If part:} Here we assume that $\bK(\S')$ is a solution to Problem~\ref{prob:design} and prove that $\S'$ is a solution to the MSMC problem. We prove this using contradiction. Assume $\S'$ is not a solution to  Problem~\ref{prob:MSMC}. Then there exists an element $x_q$ which is covered less than $\gamma+1$ times by the cover $\S'$ (since $\alpha=\gamma+1$). So $\S'$ consists of less than $\gamma+1$ sets which contain $x_q$. By the construction of sets $\S_d$ (Step~\ref{step:sets_def}), $\S_d$ consists of all state nodes that lie in some directed path in $\D(\bA, \bB, \bC)$ from input $u_i$ to  output $y_j$ for $e_d=(y_j, u_i)$. So in $\D(\bA, \bB, \bC)$ there are less than $\gamma+1$ different directed paths from some input node to some output node through $x_q$. Notice that $\bK(\S')$ consists of feedback edges corresponding to all the sets in $\S'$. Thus state node $x_q$ lies in an SCC in $\D(\bA, \bB, \bC, \bK(\S'))$ with less than $\gamma+1$ feedback links. So after removing $\gamma$ feedback links, condition~a) of proposition~\ref{prop:SFM1} is violated. This is a contradiction as $\bK(\S')$ is a solution to Problem~\ref{prob:design}. This completes the if-part.
\end{proof}

\begin{theorem}\label{th:multi_opti}
Consider a structurally cyclic structured system $(\bA,\bB,\bC)$ and a constant $\gamma$. Let Assumption~\ref{asm:special_graph_topo} holds and $(\U,\P,\alpha)$ be the MSMC problem constructed using Algorithm~\ref{alg:reduc_from_prob1}. Let $\S^{\*}$ be an optimal solution to MSMC problem and  $\bK(\S^{\*})$ be the feedback matrix selected under $\S^{\*}$. Then, 
\begin{itemize}
\item[(i)] $\bK(\S^{\*})$ is an optimal solution to Problem~\ref{prob:design}, and
 
\item[(ii)] For every $\epsilon \geqslant 1$, if there exists a $\epsilon$-optimal solution to MSMC problem, then there exists a $\epsilon$-optimal solution to the Problem~\ref{prob:design}, i.e., $|\S'| \leqslant \epsilon \, |\S^\*|$ implies $\norm[\bK(\S')]_0 \leqslant\epsilon \,\norm[\bK^\*]_0$, where $\S^\*$ is an optimal solution to MSMC problem and $\bK^\*$ is an optimal solution to Problem~\ref{prob:design}.
\end{itemize}
\end{theorem}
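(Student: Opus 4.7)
The plan is to show that Problem~\ref{prob:design} and the MSMC instance produced by Algorithm~\ref{alg:reduc_from_prob1} are equivalent as optimization problems, so that optimal and $\epsilon$-optimal solutions transfer between them. The key structural observation is that Step~\ref{step:multi_sol} of Algorithm~\ref{alg:reduc_from_prob1} defines a bijection between $2^{\P}$ and the feedback matrices consistent with Assumption~\ref{asm:special_graph_topo}: each set $\S_d\in\P$ is indexed uniquely by a feedback edge $e_d=(y_j,u_i)$, so to each $\S'\subseteq\P$ there corresponds the feedback matrix $\bK(\S')$, with $\|\bK(\S')\|_0=|\S'|$. Lemma~\ref{lem:lem1_multi_approx} guarantees that this bijection preserves feasibility in both directions.

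For part~(i), I would argue by contradiction. Suppose $\bK(\S^\*)$ is not optimal for Problem~\ref{prob:design}; then there exists $\bK'\in\K_\gamma$ (necessarily supported on back-edges under Assumption~\ref{asm:special_graph_topo}) with $\|\bK'\|_0<\|\bK(\S^\*)\|_0$. Inverting the bijection, write $\bK'=\bK(\widetilde{\S})$ for a unique $\widetilde{\S}\subseteq\P$ with $|\widetilde{\S}|=\|\bK'\|_0$. The ``if'' direction of Lemma~\ref{lem:lem1_multi_approx} then yields that $\widetilde{\S}$ is a feasible multi-cover with $|\widetilde{\S}|<|\S^\*|$, contradicting the optimality of $\S^\*$. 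Hence $\bK(\S^\*)$ is optimal, and in particular $\|\bK^\*\|_0=\|\bK(\S^\*)\|_0=|\S^\*|$.

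For part~(ii), let $\S'$ be an $\epsilon$-optimal multi-cover, i.e., $|\S'|\leq\epsilon\,|\S^\*|$. The ``only-if'' direction of Lemma~\ref{lem:lem1_multi_approx} gives $\bK(\S')\in\K_\gamma$. Combining $\|\bK(\S')\|_0=|\S'|$ with part~(i) yields
\[
\|\bK(\S')\|_0 \;=\; |\S'| \;\leq\; \epsilon\,|\S^\*| \;=\; \epsilon\,\|\bK^\*\|_0,
\]
so $\bK(\S')$ is an $\epsilon$-optimal solution to Problem~\ref{prob:design}.

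The main obstacle is verifying that the correspondence $\S'\leftrightarrow\bK(\S')$ is a genuine bijection on the feasible set. Assumption~\ref{asm:special_graph_topo} is essential here: it excludes precisely those feedback matrices with non-back-edge entries that would otherwise not be images of any $\S'\subseteq\P$, and it ensures that the zero-norm $\|\bK\|_0$ coincides with the cardinality $|\S'|$ on all admissible inputs. Once that identification is in hand, both parts reduce to direct bookkeeping via Lemma~\ref{lem:lem1_multi_approx}.
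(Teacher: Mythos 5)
Your proposal is correct and follows essentially the same route as the paper: both establish part~(i) by contradiction via the correspondence $\S'\leftrightarrow\bK(\S')$ with $\norm[\bK(\S')]_0=|\S'|$ and the feasibility equivalence of Lemma~\ref{lem:lem1_multi_approx}, and both derive part~(ii) by chaining $\norm[\bK(\S')]_0=|\S'|\leqslant\epsilon|\S^\*|=\epsilon\norm[\bK(\S^\*)]_0=\epsilon\norm[\bK^\*]_0$ using part~(i). Your explicit framing of Step~\ref{step:multi_sol} as a bijection on back-edge feedback matrices is just a cleaner statement of what the paper does implicitly when it constructs $\S^1$ from the hypothetical better solution $\bK^1$.
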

\begin{proof}
{\bf (i)}~Given $\S^\*$ is an optimal solution to Problem~\ref{prob:MSMC}. By Lemma~\ref{lem:lem1_multi_approx}, $\bK(\S^{\*})$ is a feasible solution to Problem~\ref{prob:design}. Now we prove optimality of $\bK(\S^{\*})$ using contradiction. Assume that $\bK(\S^{\*})$ is not an optimal solution to Problem~\ref{prob:design}. Then there exists a solution to Problem~\ref{prob:design}, say $\bK^1$,  such that $\norm[\bK^1]_0 < \norm[\bK(\S^{\*})]_0$. Consider $\S^1 := \{\S_j:\S_j$ consists of  $x_q$'s which lie in some directed path from $u_i$ to $y_j$ if $\bK_{ij}^1=\*\}$.
Here, $\norm[\bK^1]_0 = |\S^1|$. From Lemma~\ref{lem:lem1_multi_approx}, $\S^1$ is a feasible solution to MSMC problem. As $\norm[\bK^1]_0 < \norm[\bK(\S^{\*})]_0$, $|\S^1|<|\S^{\*}|$. This is a contradiction as $\S^{\*}$ is an optimal solution. Hence $\bK(\S^\*)$ is an optimal  solution to Problem~\ref{prob:design}.

\noindent{\bf (ii)}~ Let $\S^{\*}$ be an optimal solution to the MSMC problem.  Given $|\S'| \leqslant \epsilon |\S^{\*}|$ and we need to show that $\norm[\bK(\S')]_0 \leqslant \epsilon \norm[\bK^{\*}]_0$. From Step~\ref{step:multi_sol} of Algorithm~\ref{alg:reduc_from_prob1} we get $\norm[\bK(\S^{\*})]_0=|\S^{\*}|$. As $|\S'| \leqslant \epsilon \, |\S^{\*}|$, $\norm[\bK(\S')]_0=|\S'| \leqslant \epsilon\, |\S^{\*}| = \epsilon\, \norm[\bK(\S^{\*})]_0$. Moreover, from Theorem~\ref{th:multi_opti}~(i), $\bK(\S^{\*})$ selected under $\S^{\*}$ is an optimal solution to Problem~\ref{prob:design}.  Hence $\norm[\bK(\S^{\*})]_0 = \norm[\bK^{\*}]_0$ and $\norm[\bK(\S')]_0 \leqslant\epsilon \,\norm[\bK^\*]_0$. This completes the proof.  
\end{proof}

Theorem~\ref{th:multi_opti} proves that if one can find an approximate solution to the MSMC problem, then it gives an approximate solution to Problem~\ref{prob:design}.
 
\begin{theorem}\label{th:th_multi_approx}
Consider a structurally cyclic structured system $(\bA,\bB,\bC)$ and a constant $\gamma$. Let Assumption~\ref{asm:special_graph_topo} holds and $(\U,\P,\alpha)$ be the MSMC problem constructed using Algorithm~\ref{alg:reduc_from_prob1}. Then, ~Problem~\ref{prob:design} is approximable to $O(\log~ n)$ in polynomial time.
\end{theorem}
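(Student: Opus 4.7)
The plan is to chain three polynomial-time steps: (i) reduce the given instance of Problem~\ref{prob:design} to a minimum set multi-covering (MSMC) instance using Algorithm~\ref{alg:reduc_from_prob1}; (ii) run a known $O(\log n)$-approximation algorithm on the MSMC instance; and (iii) lift the MSMC approximate solution back to a feedback matrix via the map $\S' \mapsto \bK(\S')$ defined in Step~\ref{step:multi_sol}, using Theorem~\ref{th:multi_opti}(ii) to carry the approximation factor through.

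For step~(i) I would first argue that Algorithm~\ref{alg:reduc_from_prob1} runs in polynomial time. The set of feasible feedback edges $E_K$ can be produced in $O(n^2)$ reachability checks on $\D(\bA,\bB,\bC)$, and $|E_K| \leqslant m p = O(n^2)$. For each edge $e_d = (y_j, u_i) \in E_K$ the corresponding set $\S_d$ is exactly the intersection of the descendants of $u_i$ with the ancestors of $y_j$ in $\D(\bA,\bB,\bC)$, which is obtainable by two depth-first searches and an intersection in $O(n+|E_X|+|E_U|+|E_Y|) = O(n^2)$ operations. Hence the constructed MSMC instance $(\U,\P,\alpha)$ has universe size $|\U| = n$, at most $O(n^2)$ sets, and is built in polynomial time.

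For step~(ii) I would invoke the classical greedy algorithm for minimum set multi-cover (a direct generalization of the greedy set-cover algorithm), which is well known to return a cover $\S'$ of size at most $H(\max_d |\S_d|)\cdot|\S^\*| \leqslant H(n)\cdot|\S^\*| = O(\log n)\cdot|\S^\*|$, where $\S^\*$ is an optimum multi-cover. Its runtime is polynomial in the MSMC input size, hence polynomial in $n$.

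For step~(iii) I would apply Theorem~\ref{th:multi_opti}(ii) with $\epsilon = O(\log n)$: this directly yields $\|\bK(\S')\|_0 \leqslant O(\log n)\,\|\bK^\*\|_0$, where $\bK^\*$ is an optimum of Problem~\ref{prob:design}. Composing the three polynomial-time steps gives a polynomial-time $O(\log n)$-approximation algorithm for Problem~\ref{prob:design}, proving the theorem. The only non-routine part of the argument is ensuring that the $O(\log n)$ factor is truly in the system dimension $n$ rather than in, say, the number of sets $|\P|$; this is resolved by noting that the standard multi-cover greedy bound depends on $\max_d |\S_d| \leqslant n$ (the universe size), which by construction equals the system dimension. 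Combined with Corollary~\ref{cor:NP} (NP-hardness under the same hypothesis) and the inapproximability bound inherited from Theorem~\ref{th:approxi_NP_sol}, this also establishes order-optimality of the approximation.
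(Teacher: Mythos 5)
Your proposal is correct and follows essentially the same route as the paper: a polynomial-time reduction via Algorithm~\ref{alg:reduc_from_prob1}, an invocation of a known $O(\log n)$-approximation for MSMC (the paper cites the greedy-type algorithm of Rajagopalan--Vazirani where you invoke the greedy harmonic bound $H(\max_d|\S_d|)\leqslant H(n)$), and a transfer of the approximation factor back through Theorem~\ref{th:multi_opti}(ii). Your added remark that the logarithmic factor is controlled by the universe size $n$ rather than by $|\P|$ is a worthwhile clarification the paper leaves implicit, but it does not change the argument.
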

\begin{proof}
To prove this, we first give the complexity of Algorithm~\ref{alg:reduc_from_prob1}. Finding a directed path between an input and an output has $O(n)$ complexity. The possible number of paths is at most $mp$ and $m=p=O(n)$. Thus the complexity of Step~\ref{step:bK_def} is $O(n^3)$.  Step~\ref{step:univ_def} has complexity $O(n)$. Steps~\ref{step:P_sets_def}-\ref{step:endfor} are of complexity $O(|E_K|)$ which is $O(mp)$. In general $m=p=O(n)$.  Remaining steps are of linear complexity.  Thus Steps~\ref{step:state_inp_oup_2}-\ref{step:endfor} have complexity $O(n^3)$. Thus Algorithm~\ref{alg:reduc_from_prob1}   gives a polynomial time reduction of Problem~\ref{prob:design} for a structured system that satisfies Assumption~\ref{asm:special_graph_topo} to an instance of the MSMC problem in $O(n^3)$ operations. 

From Theorem~\ref{th:multi_opti}~(ii), an $\epsilon$-optimal solution to MSMC problem gives an $\epsilon$-optimal solution to Problem~\ref{prob:design}, for any $\epsilon \geqslant 1$. There exists a polynomial time algorithm to solve MSMC problem which gives $O(\log~n)$-optimal solution \cite[Theorem~5.2]{RajVaz-93}. Now using Algorithm~\ref{alg:reduc_from_prob1} in this paper and Algorithm~5.2 given in \cite{RajVaz-93},  Problem~\ref{prob:design} is approximable to $O(\log~n)$ in polynomial time.
\end{proof}
Theorem~\ref{th:th_multi_approx} thus concludes that Problem~\ref{prob:design} is approximable to factor $O(\log\,n)$ for structured systems satisfying Assumption~\ref{asm:special_graph_topo} in polynomial time.

In the next section, we provide an algorithm to solve Problem~\ref{prob:verification} when $D(\bA)$ is irreducible.

%%%%%%%%%%%%%%%%%%%%%%%%%%%%%%%%%%%%%%%%%%%%%%%%%%%%%%%%%%%%%%%%

%%%%%%%%%%%%%%%%%%%%%%%%%%%%%%%%%%%%%%%%%%%%%%%%%%%%%%%%%%%%%%%%
\section{Algorithm for  Feedback Resilient Verification Problem}\label{sec:algo_verify}
In this section, we propose an algorithm to solve Problem~\ref{prob:verification} for irreducible systems.  Note that, Problem~\ref{prob:verification} is NP-complete even for irreducible systems (Corollary~\ref{cor:NP_irreducible}). The proposed algorithm is computationally efficient  for smaller values of $\gamma$.  Typically, while attacking cyber-physical systems the attacker targets few links due to resource and infrastructure constraints and to remain undetected by the system.  
Henceforth, the following assumption holds.
\begin{assume}\label{asm:irr}
For the structured system $(\bA, \bB, \bC, \bK)$, the digraph $\D(\bA)$ is irreducible.
\end{assume}
If $\D(\bA)$ is irreducible, then only one feedback link is enough to satisfy condition~a) in Proposition~\ref{prop:SFM1}. Thus the class of irreducible systems satisfies the no-SFMs criteria if the system bipartite graph $\B(\bA, \bB, \bC, \bK)$ has a perfect matching and $\D(\bA, \bB, \bC, \bK)$ has at least one feedback link present in it. Hence, Problem~\ref{prob:verification} for irreducible systems boils down to checking the existence of a perfect matching in $\B(\bA, \bB, \bC, \bK)$ after the failure of any $\gamma$ feedback links.
However, if there exists a perfect matching in $\B(\bA, \bB, \bC, \bK)$ that uses no feedback links, then condition~b) in Proposition~\ref{prop:SFM1} is satisfied without using any feedback edge: in such a case, any one feedback edge is sufficient to satisfy the no-SFMs criteria and the system is resilient for any $\gamma < |\E_K|$. Thus, the case where all perfect matchings in $\B(\bA, \bB, \bC, \bK)$ have at least one feedback edge is of interest  and considered here.

In this section, we discuss our approach to solve Problem~\ref{prob:verification}. If a system is resilient to failure of any $\gamma$ feedback links, then it is resilient to failure of any set of feedback links of cardinality less than $\gamma$. Thus,  to solve Problem~\ref{prob:verification} it is enough to verify if the system is resilient to failure of any $\gamma$ feedback links. 
Problem~\ref{prob:verification} is NP-complete for a general $\gamma$. As $m=O(n)$ and $p=O(n)$, the number of feedback links is $O(n^2)$. So there are $\binom{n^2}{\gamma}$ bipartite graphs possible for removal of any $\gamma$ feedback links. The exhaustive search-based technique requires checking perfect matching in $\binom{n^2}{\gamma}$ bipartite graphs which is exponential in $n^2$ and the complexity is huge even for small $\gamma$ as $n$ is large for complex systems. We now present an algorithm to solve Problem~\ref{prob:verification} with a significant saving in computations when compared to the exhaustive search-based approach. %Specifically, for smaller values of $\gamma$, which is typical in adversarial attacks, our algorithm is computationally very efficient.

\subsection{Algorithm and Results for $\gamma=1$}\label{subsec:k=1}
This subsection elaborates an algorithm for solving  Problem~\ref{prob:verification} when the possible number of breakages are at most one, i.e., $\gamma=1$. An exhaustive search-based technique to solve this case involves  removing each of the feedback edges individually from $\E_K$ and then checking for the existence of perfect matching in the new bipartite graph for each of this case. Note that, the number of feedback links is $O(n^2)$: exhaustive search-based technique requires verifying $O(n^2)$ cases. We present a scheme where only $O(n)$ cases have to be verified.

\begin{algorithm}[t]
\caption{Psuedocode to verify resilience of bipartite graph $\B_1=(V_1, \widetilde{V}_1, \E_1)$  for any one edge removal from set $S_1$\label{alg:k=1}}
\begin{algorithmic}
\State \textit {\bf Input:} Bipartite graph $\B_1=(V_1, \widetilde{V}_1, \E_1)$ and  set $S_1 \subset \E_1$
\State \textit{\bf Output:} True or False 
\end{algorithmic}
\begin{algorithmic}[1]
\State Define cost function, $c_0(e) \leftarrow \begin{cases}
  1, {\rm for~} e \in S_1,\\
  0, {\rm otherwise}.
\end{cases}  \label{step:cost1}
  $
  \State Find min-cost perfect matching in $\B_1$ using  $c_0$, say $M_{0}$ \label{step:MCMM}
  \State Define $F_{0} := M_0 \cap  S_1$, where $F_{0}=\{f_1,\ldots,f_{\ell}\}$\label{step:F0}
 \State Define $result$ $\leftarrow$ True \label{step:res_1}
 \For{$i=1$ to $\ell$, $i++$}
    \State  Check  for existence of perfect matching in $\B_1$ after removing edge $f_i$ \label{step:remove_fi}
    \If {matching does not exist} \label{step:add_edge_1}
		\State $result$ $\leftarrow$ False, go to Step~\ref{step:new_return} \label{step:break_1}
	\EndIf
 \EndFor
 \State return $result$\label{step:new_return}
\end{algorithmic}
\end{algorithm}

The pseudocode of the proposed algorithm is given in Algorithm~\ref{alg:k=1}. It takes as input a  bipartite graph $\B_1 =(V_1, \widetilde{V}_1, \E_1)$ and an edge set $S_1 \subseteq \E_1$, and outputs if  $\B_1$ has a perfect matching for every  one edge removal from $S_1$. We define cost function $c_0$ on the edges in $\B_1$ as shown in Step~\ref{step:cost1}. Let $M_{0}$ denotes a minimum cost perfect matching in $\B_1$ under cost function $c_0$ (Step~\ref{step:MCMM}). Note that, if $\B_1 = \B(\bA, \bB, \bC, \bK)$ and $S_1=\E_K$, then the number of feedback edges in $M_0$ is the least number of feedback edges required to have a perfect matching in $\B(\bA, \bB, \bC, \bK)$. Let $F_{0} := M_{0} \cap S_1 $, where $F_0 = \{f_1, \ldots, f_\ell\}$  (Step~\ref{step:F0}). In Step~\ref{step:res_1}, we initialize $result$ variable to `True'. In iteration $i$ of the {\bf for} loop,  the algorithm checks if there exists a perfect matching in  $\B_1$ after removing edge $f_i$ (Step~\ref{step:remove_fi}). If a matching does not exist, then  it outputs `False' (Step~\ref{step:break_1}). On the other hand, if a matching exists, then the algorithm proceeds with the removal of the next edge in $F_0$. Finally, it returns $result$ as the output.

For a given structured system, the theorem below proves that Algorithm~\ref{alg:k=1} solves Problem~\ref{prob:verification} for $\gamma=1$.  Further, we also give the complexity of Algorithm~\ref{alg:k=1}.
\begin{theorem}\label{th:k=1}
Consider a closed-loop structured system $(\bA, \bB, \bC, \bK)$. Let  Assumption~\ref{asm:irr} holds. Algorithm~\ref{alg:k=1}, which takes as input the bipartite graph $\B_1$ and edge set $S_1$, outputs solution to Problem~\ref{prob:verification} for $\gamma=1$ for $\B_1 = \B(\bA, \bB, \bC, \bK)$ and $S_1=\E_K$. Also, the complexity of Algorithm~\ref{alg:k=1} is $O(n^{3.5})$, where $n$ denotes the number of states in the system.
\end{theorem}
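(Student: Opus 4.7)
The plan is to establish a structural reduction from Problem~\ref{prob:verification} to bipartite-matching preservation, then argue correctness of Algorithm~\ref{alg:k=1} through a pruning observation, and finally bound the running time. Under Assumption~\ref{asm:irr}, the state digraph $\D(\bA)$ is irreducible, which forces all state nodes to share a single strongly connected component of $\D(\bA,\bB,\bC,\bK)$; consequently, condition~a) of Proposition~\ref{prop:SFM1} holds automatically as long as at least one feedback edge survives, and by Proposition~\ref{prop:SFM2}, condition~b) is equivalent to the existence of a perfect matching in $\B(\bA,\bB,\bC,\bK)$. I would also invoke the running-case hypothesis announced just above the algorithm, that every perfect matching of $\B(\bA,\bB,\bC,\bK)$ uses at least one feedback edge: combined with the fact that a post-deletion perfect matching $M'$ necessarily avoids the deleted edge, this forces $M'$ to contain some surviving feedback edge, so condition~a) cannot fail either. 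Hence Problem~\ref{prob:verification} for $\gamma=1$ collapses to deciding whether $\B(\bA,\bB,\bC,\bK)$ admits a perfect matching after deletion of \emph{each} edge of $S_1 = \E_K$.

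The key correctness observation is that it suffices to test deletions from $F_0 = M_0 \cap S_1$ rather than all of $S_1$. Indeed, for any $e \in S_1 \setminus F_0$, we have $e \notin M_0$, so $M_0$ remains a perfect matching after deleting $e$ and no separate check is needed. Therefore $\B(\bA,\bB,\bC,\bK)$ is resilient to every single-edge removal from $S_1$ if and only if, for each $f_i \in F_0$, a perfect matching still exists after deleting $f_i$, which is precisely what Steps~\ref{step:remove_fi}--\ref{step:break_1} verify. The variable $result$ is set to False exactly when some $f_i$ witnesses non-resilience, so the algorithm's output coincides with the answer to Problem~\ref{prob:verification} for $\gamma=1$.

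For the complexity bound, since $m,p = O(n)$, the bipartite graph $\B(\bA,\bB,\bC,\bK)$ has $O(n)$ vertices on each side. Step~\ref{step:MCMM} computes a minimum-cost perfect matching in $O(n^{2.5})$ operations by the complexity stated in Section~\ref{sec:graph}, and Step~\ref{step:F0} is linear. The loop executes at most $|F_0| \leqslant |M_0| = O(n)$ iterations, each invoking a perfect-matching test at $O(n^{2.5})$; summing yields the advertised $O(n^{3.5})$ bound.

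The main obstacle I anticipate is the tight coupling between bipartite-matching preservation and condition~a) of Proposition~\ref{prop:SFM1} after a deletion, because Algorithm~\ref{alg:k=1} itself only tests matchings and never explicitly verifies the feedback-edge condition. The justification rests on the running-case invariant that every perfect matching uses a feedback edge, which implicitly guarantees that at least two feedback edges are present whenever a post-deletion perfect matching exists. Once this invariant is cleanly pinned down, the rest of the proof is a clean pruning argument built around the minimum-cost matching $M_0$.
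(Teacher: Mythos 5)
Your proof is correct and follows essentially the same route as the paper's: the pruning observation that $M_0$ survives any deletion from $S_1\setminus F_0$, so only the $O(n)$ edges of $F_0=M_0\cap S_1$ need individual matching tests, followed by the $O(n^{3.5})$ loop bound. Your treatment is in fact slightly more explicit than the paper's in justifying why a post-deletion perfect matching also secures condition~a) of Proposition~\ref{prop:SFM1} (the paper relegates that to the preamble of Section~\ref{sec:algo_verify}), but the substance of the argument is the same.
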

\begin{proof}
Here, $F_0= M_0\cap\E_K$.  This implies $M_0 \cap \{\E_K \setminus F_0\} = \emptyset$.  Thus for any one edge removal from $\E_K \setminus F_0$, the system $(\bA, \bB, \bC, \bK)$ is resilient (since perfect matching $M_0$ exists). Hence it is enough to check if there exists a perfect matching in $\B(\bA, \bB, \bC, \bK)$ for every one edge removal from $F_{0}$. Note that, $|F_0| = \ell \geqslant 1$, otherwise the system is always resilient.  In each iteration of the {\bf for} loop of Algorithm~\ref{alg:k=1}, we remove an edge from $F_0$ and check for the existence of a perfect matching. If a perfect matching does not exist, then the algorithm concludes that the system is not one edge resilient. On the other hand, if there exists a perfect matching for every one edge removal from $F_{0}$, then the algorithm concludes that the system is resilient to any one  edge removal from $F_0$. Hence Algorithm~\ref{alg:k=1} solves Problem~\ref{prob:verification} for $\gamma=1$.

Finding minimum cost perfect matching in $\B(\bA, \bB, \bC, \bK)$ has  complexity $O(n^{3})$ \cite{Die:00}. In every iteration, Algorithm~\ref{alg:k=1} finds a perfect matching. Note that the maximum number of iterations is $\ell$. Further, $m=O(n)$, $p=O(n)$ together implies  $\ell = O(n)$. Finding perfect matching in $\B(\bA, \bB, \bC, \bK)$ has complexity $O(n^{2.5})$ \cite{Die:00}.  All the other steps are of linear complexity. Hence  complexity of Algorithm~\ref{alg:k=1} is $O(n^{3.5})$. 
\end{proof}
%%%%%%%%%%%%%%%%%%%%%%%%%%%%%%%%%%%%%%%%%%%%%%%%%%%%%%%%%%%%%55
\vspace*{-2 mm}
\subsection{Algorithm and Results for $\gamma=2$}\label{subsec:k=2}
In this subsection, we present an algorithm to solve Problem~\ref{prob:verification} for $\gamma = 2$ and then prove its correctness and complexity.
 
\begin{algorithm}[t]
  \caption{Psuedocode to verify resilience of bipartite graph $\B_2$ for  removal of any two edges from set $S_2$
  \label{alg:k=2}}
  \begin{algorithmic}
  	\State \textit {\bf Input:} Bipartite graph $\B_2 = (V_2, \widetilde{V}_2, \E_2)$ and  set $S_2 \subset \E_2$
	\State \textit{\bf Output:} True or False 
  \end{algorithmic}
  \begin{algorithmic}[1]
  	\State Define cost function, $c_2(e) \leftarrow \begin{cases}
  1, {\rm for~} e \in S_2,\\
  0, {\rm otherwise}.
\end{cases}  \label{step:cost2}
  $
  \State Find min-cost perfect matching  in $\B_2$ using $c_2$, say $M_{2}$ \label{step:MCMM2}
  \State Define $F_{2}:= M_2 \cap  S_2$, where $F_{2}=\{f_1,\ldots,f_{\ell_2}\}$\label{step:F0_2}
 \State  Define $result_1$ $\leftarrow$ True and $result_2$ $\leftarrow$ True \label{step:res1res2}
  	\For{$i=1$ to $\ell_2$, $i++$}\label{step:for1}
  		\For{$j=i+1$ to $\ell_2$, $j++$}\label{step:for2}
    		\State Check  for existence of perfect matching in $\B_2$ after removing edges $f_i$ and $f_j$ \label{step:remove_fifj}
    		\If {perfect matching does not exists}\label{step:if_Mij}				\State  $result_1$ $\leftarrow$ False, go to Step~\ref{step:retres1res2}\label{step:res1false}
			\EndIf
		\EndFor\label{step:endfor2}
  	\EndFor\label{step:endfor1}
		\For{$i=1$ to $\ell$, $i++$} \label{step:res1for}
    		\State Define $\B_2^i=(V_2, \widetilde{V}_2, \E_2\setminus \{f_i\})$  \label{step:res1fi} 
			\State Apply Algorithm \ref{alg:k=1} with  $\B_1 =\B_2^i$ and $S_1 = S_2 \setminus F_2$\label{step:res1algo1}
			\State $result_2$ $\leftarrow$ Output of Algorithm \ref{alg:k=1}\label{step:res2algo1res}
			\If {$result_2 = $ False}
				\State Go to Step~\ref{step:retres1res2} \label{step:res2break}
			\EndIf
		\EndFor	\label{step:endres1for}
	\State return True if both $result_1$ and $result_2$ are True and False otherwise.\label{step:retres1res2}
  \end{algorithmic}
\end{algorithm}
The pseudocode of the proposed algorithm is given in Algorithm~\ref{alg:k=2}. It takes as input a  bipartite graph $\B_2 = (V_2, \widetilde{V}_2, \E_2)$ and an edge set $S_2 \subseteq \E_2$, and outputs if  $\B_2$ has a perfect matching for  removal of any two edges from $S_2$.
% Firstly, the algorithm finds a perfect matching that  consists of minimum number of edges from $S_2$. Subsequently, we check if $\B_2$ has a perfect matching for removal of any distinct pair of edges from $S_2$ that are present in this matching.

\noindent{\bf Steps~\ref{step:cost2}-\ref{step:res1res2}}:  We define a cost function $c_2$ on the edges in $\B_2$ as shown in Step~\ref{step:cost2}. Let $M_{2}$ denotes a minimum cost perfect matching in $\B_2$ under  $c_2$. Note that, for $\B_2 = \B(\bA, \bB, \bC, \bK)$ and $S_2=\E_K$, the number of feedback edges in $M_2$ is the least number of feedback edges required to obtain a perfect matching in $\B(\bA, \bB, \bC, \bK)$. Let $F_{2} := M_{2} \cap S_2 $, where $F_2 = \{f_1, \ldots, f_{\ell_2}\}$. We initialize variables $result_1$  and $result_2$ to `True'. 

\noindent{\bf Steps~\ref{step:for1}-\ref{step:endfor1}}: In each iteration, the algorithm checks if there exists a perfect matching in  $\B_2$ after removing two edges, say $f_i$ and $f_j$, from $F_2$. If a matching does not exist after removing a pair $\{f_i, f_j\} \in F_2$, then the algorithm sets $result_1$ to False and goes to Step~\ref{step:retres1res2} and outputs `False'. On the other hand, if a matching exists for removal of each pair of edges from $F_2$, then $result_1$ is True. At the end of Step~\ref{step:endfor1}, if $result_1$ is True, then $\B_2$ has a perfect matching for removal of any two edges from $F_2$ and Algorithm~\ref{alg:k=2} proceeds to Step~\ref{step:res1for}.

\noindent{\bf Steps~\ref{step:res1for}-\ref{step:retres1res2}}: In iteration $i$ of the {\bf for} loop,  the algorithm removes an edge $f_i \in F_2$ from $\B_2$. The modified graph is denoted by $\B_2^i$. Then, we run Algorithm~\ref{alg:k=1} with inputs $\B_2^i$ and $S_2 \setminus F_2$ to check if there exists a perfect matching for a failure of one edge from $F_2$ and one edge from $S_2 \setminus F_2$. If Algorithm~\ref{alg:k=1} outputs `False', then we proceed to Step~\ref{step:retres1res2} of Algorithm~\ref{alg:k=2} and return `False' as the output. On the other hand, if Algorithm~\ref{alg:k=1} outputs `True', for all possible $\B_2^i$'s, then we return `True' as the output of Algorithm~\ref{alg:k=2}.

For a given structured system, the result below proves that Algorithm~\ref{alg:k=2} solves Problem~\ref{prob:verification} for $\gamma=2$.  Further, we also give the complexity of Algorithm~\ref{alg:k=2}.
\begin{theorem}\label{th:k=2}
Consider a closed-loop structured system $(\bA, \bB, \bC, \bK)$. Let  Assumption~\ref{asm:irr} holds.  Algorithm~\ref{alg:k=2}, which takes as input a bipartite graph $\B_2$ and edge set $S_2$, outputs solution to Problem~\ref{prob:verification} for $\gamma=2$ for $\B_2 = \B(\bA, \bB, \bC, \bK)$ and $S_2=\E_K$. Also, the complexity of Algorithm~\ref{alg:k=2} is $O(n^{4.5})$, where $n$ denotes the number of states in the system.
\end{theorem}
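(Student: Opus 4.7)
The plan is to prove correctness by partitioning every possible failure set $\I\subseteq S_2$ with $|\I|\le 2$ according to how it intersects $F_2$, and then to bound the cost of the three computational phases of the algorithm.

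Any $\I\subseteq S_2$ with $|\I|\le 2$ falls into one of three cases: (a) $\I\subseteq S_2\setminus F_2$, (b) $\I\subseteq F_2$, or (c) $\I=\{f_i,g\}$ with $f_i\in F_2$ and $g\in S_2\setminus F_2$. For case (a), because $F_2=M_2\cap S_2$, we have $M_2\cap\I=\emptyset$, so $M_2$ itself remains a perfect matching in $\B_2\setminus\I$ and no explicit test is needed; the min-cost choice of $M_2$ (and not just any perfect matching) is what also limits $|F_2|$ for efficiency. Case (b) is handled verbatim by the nested loops of Steps~\ref{step:for1}--\ref{step:endfor1}, which enumerate every unordered pair $\{f_i,f_j\}\subseteq F_2$ and directly verify the existence of a perfect matching after removing both. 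A singleton failure $\I=\{f_i\}\subseteq F_2$ is implicitly covered by monotonicity: if $\B_2\setminus\{f_i,f_j\}$ has a perfect matching for some $f_j\in F_2\setminus\{f_i\}$, then so does the strictly larger edge set $\B_2\setminus\{f_i\}$.

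Case (c) is where Algorithm~\ref{alg:k=1} is invoked. For each $f_i\in F_2$, Steps~\ref{step:res1for}--\ref{step:endres1for} form $\B_2^i:=\B_2\setminus\{f_i\}$ and call Algorithm~\ref{alg:k=1} with $\B_1=\B_2^i$ and $S_1=S_2\setminus F_2$; by Theorem~\ref{th:k=1}, this correctly detects whether $\B_2^i$ retains a perfect matching after the removal of every single edge from $S_2\setminus F_2$, which is precisely the content of case (c). One subtlety is that Algorithm~\ref{alg:k=1} presumes $\B_1$ itself admits a perfect matching; if $\B_2^i$ does not, then $\{f_i\}$ alone is a one-edge blocker. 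When $|F_2|\ge 2$ this is already detected in case (b) (if $\B_2\setminus\{f_i\}$ has no perfect matching, neither does $\B_2\setminus\{f_i,f_j\}$ for any $f_j$), and the degenerate corner $|F_2|=1$ can be resolved either by a preliminary perfect-matching feasibility check on $\B_2^1$ before handing control to Algorithm~\ref{alg:k=1} or by interpreting failure of its Step~\ref{step:MCMM} as returning False. Combining cases (a)--(c) gives the correctness equivalence.

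For complexity, the key estimate is $\ell_2=|F_2|\le|M_2\cap\E_K|\le m=O(n)$, since every feedback edge in a matching consumes a distinct $u'$ vertex. Step~\ref{step:MCMM2} costs $O(n^3)$ for min-cost perfect matching. The nested loops run $\binom{\ell_2}{2}=O(n^2)$ times with each iteration performing an $O(n^{2.5})$ perfect-matching check, contributing $O(n^{4.5})$. The final loop runs $\ell_2=O(n)$ times and each call to Algorithm~\ref{alg:k=1} costs $O(n^{3.5})$ by Theorem~\ref{th:k=1}, contributing another $O(n^{4.5})$. Remaining bookkeeping is linear, so the overall bound is $O(n^{4.5})$. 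The hardest part of the write-up will be articulating case (c) cleanly: one must argue not only that Algorithm~\ref{alg:k=1} is applied to the correct subgraph and edge set, but also that the feasibility hypothesis of its min-cost matching step is met whenever execution reaches it, so that the two-phase structure of Algorithm~\ref{alg:k=2} does not leave any two-edge failure scenario unexamined.
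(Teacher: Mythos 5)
Your proposal is correct and follows essentially the same route as the paper's proof: the identical three-way case split on how the failed pair intersects $F_2$ (both in $F_2$, one in each, both outside), the same observation that $M_2$ survives removals disjoint from it, the same delegation of the mixed case to Algorithm~\ref{alg:k=1}, and the same $O(n^{4.5})$ accounting. Your additional remarks on singleton failures and on the feasibility hypothesis of Algorithm~\ref{alg:k=1} are sound extra care that the paper's proof leaves implicit, but they do not change the argument.
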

\begin{proof}
The structured system $(\bA, \bB, \bC, \bK)$ is resilient for $\gamma=2$, if $\B(\bA, \bB, \bC, \bK)$ has a perfect matching for removal of any two edges from $\E_K$. With inputs $\B(\bA, \bB, \bC, \bK)$ and $\E_K$ to Algorithm~\ref{alg:k=2}, $M_2$ is a perfect matching in $\B(\bA, \bB, \bC, \bK)$ and $F_2 \subseteq \E_K$. Then, removal of any two feedback edges can be done in the following ways: (a) both edges from set $F_2$, (b) one edge from $F_2$ and the another from $\E_K \setminus F_2$, and (c) both edges from set $\E_K \setminus F_2$. For the system $(\bA, \bB, \bC, \bK)$ to be resilient for $\gamma=2$, $\B(\bA, \bB, \bC, \bK)$ must have perfect matching for all these cases.  

\noindent {\bf Case~(a)}: For $\B_2 = \B(\bA, \bB, \bC, \bK)$ and $S_2=\E_K$, Steps~\ref{step:for1}-\ref{step:endfor1} of Algorithm~\ref{alg:k=2} checks for existence of perfect matching in $\B(\bA, \bB, \bC, \bK)$ for removal of every pair of edges from $F_2$.  
If there exists no perfect matching after removing some pair of edges in $F_2$, then the algorithm sets $result_1$  to `False' and concludes in Step~\ref{step:retres1res2} that the system is not resilient. 

\noindent {\bf Case~(b)}: For $\B_2 = \B(\bA, \bB, \bC, \bK)$ and $S_2=\E_K$, Steps~\ref{step:res1for}-\ref{step:retres1res2} of Algorithm~\ref{alg:k=2} checks if the system $(\bA, \bB, \bC, \bK)$ is resilient to removal of any edge $f_i \in F_2$ and the other edge from $\E_K \setminus F_2$. Recall that here $\B_2^i$ is obtained after removing edge $f_i$ from $\B(\bA, \bB, \bC, \bK)$. If for any $\B_2^i$  there does not exist a perfect matching, then $result_2$ is set to False. Thus Algorithm~\ref{alg:k=2} concludes if the system is resilient to any two edge removal for case~(b) precisely.

\noindent {\bf Case~(c)}: As $M_2 \cap \{\E_K \setminus F_2\} = \emptyset $, for removal of any two edges  from $\E_K \setminus F_2$, matching $M_2$ exists. Hence the system is resilient to the removal of any two edges for this case. So,  case~(c) requires no verification.

 In the end, the algorithm outputs `True'  if both $result_1$ and $result_2$ are True, i.e., returns True if $\B(\bA, \bB, \bC, \bK)$ has perfect matching for both case~(a) and case~(b). If there exists no perfect matching in either case~(a) or in case~(b), then the output of Algorithm~\ref{alg:k=2} is `False'. This proves the correctness of Algorithm~\ref{alg:k=2} to solve Problem~\ref{prob:verification} for $\gamma=2$.

Finding minimum cost perfect matching in $\B(\bA, \bB, \bC, \bK)$ has  complexity $O(n^{3})$. Here, $\ell_2= O(n)$. For case~(a) there are $O(n^2)$ possible pair of edges and for every possible pair of edge removal, we check if there exists a perfect matching. The time complexity to find perfect matching is $O(n^{2.5})$. So the time complexity for case~(a) is $O(n^{4.5})$.
In case~(b), for every edge removal from $F_2$ we apply Algorithm~\ref{alg:k=1}. The time complexity of Algorithm~\ref{alg:k=1} is $O(n^{3.5})$ and $|F_2|=O(n)$. So the time complexity for case~(b) is $O(n^{4.5})$. All other steps of the algorithm are of linear complexity. Hence the total time complexity of Algorithm~\ref{alg:k=2} is $O(n^{4.5})$. 
\end{proof}

%%%%%%%%%%%%%%%%%%%%%%%%%%%%%%%%%%%%%%%%%%%%%%%%%%%%%%%%%%
\remove{
\subsection{Algorithm and Results for $\gamma=3$}\label{subsec:k=3}
In this subsection, we  list the key steps involved in solving Problem~\ref{prob:verification} for $\gamma = 3$. Later in subsection~\ref{subsec:k=k}, we extend these steps to solve Problem~\ref{prob:verification} for a general case. Our approach to solve Problem~\ref{prob:verification} for $\gamma=3$ uses Algorithm~\ref{alg:k=1} and Algorithm~\ref{alg:k=2}.

Consider a bipartite graph  $\B_3$ and an edge set $S_3$. Find a perfect matching in $\B_3$, say $M_3$, that consists of the minimum number of edges from $S_3$ (using a minimum cost perfect matching algorithm with non-zero uniform cost on edges in $S_3$ and $0$ cost on other edges). Define $F_3 := M_3 \cap S_3$ and let $|F_3|=\ell_3$. Removal of three edges from $S_3$ can be done in the following four ways: case~(0)~all three  from $F_3$, case~(1)~two from $F_3$ and the other one from $S_3 \setminus F_3$, case~(2)~one from $F_3$ and the other two from $S_3 \setminus F_3$ and case~(3)~all three from $S_3 \setminus F_3$. The steps below  verifies if there exists a perfect matching in $\B_3$ for removal of any three edges from $S_3$.

\noindent $\bullet$ For case~(0), check existence of perfect matching for removal of every possible three edges from $F_3$. 

\noindent $\bullet$ For case~(1), remove a pair of edges in $F_3$ from $\B_3$  and apply Algorithm~\ref{alg:k=1} with inputs $\B_1$ as the bipartite graph obtained after removing pair of edges from $\B_3$ and $S_1$ as $S_3 \setminus F_3$. 

\noindent $\bullet$ For case~(2), remove an edge in $F_3$ from $\B_3$ and apply Algorithm~\ref{alg:k=2} with inputs $\B_2$ as the bipartite graph obtained after removing an edge from $\B_3$ and $S_2$ as $S_3 \setminus F_3$.

\noindent $\bullet$ For case~(3), as $M_3 \cap \{S_3 \setminus F_3\} = \emptyset $, for any three edge removal from $S_3 \setminus F_3$ matching $M_3$ exists. 

For $\B_3=\B(\bA, \bB, \bC, \bK)$ and $S_3 = \E_K$, the above steps solves Problem~\ref{prob:verification} for $\gamma=3$. The proof of this claim for general $\gamma$ is given in Theorem~\ref{th:algo_k}.
}
%%%%%%%%%%%%%%%%%%%%%%%%%%%%%%%%%%%%%%%%%%%%%%%%%%%%%%%%%%%%%%
\vspace*{-2 mm}
\subsection{Algorithm and Results for General $\gamma$}\label{subsec:k=k}
In this subsection, we present the key steps of the recursive algorithm to solve Problem~\ref{prob:verification} for general $\gamma$. The inputs to the algorithm are bipartite graph $\B_\gamma = (V_\gamma, \widetilde{V}_\gamma, \E_\gamma)$ and edge set $S_\gamma \subseteq \E_\gamma$. 

Consider a bipartite graph $\B_\gamma$ and an edge set $S_k$. Find a perfect matching in $\B_\gamma$, say $M_\gamma$, that consists of minimum number of edges from $S_\gamma$ (using a min-cost perfect matching algorithm with non-zero uniform cost on edges in $S_\gamma$ and $0$ cost on other edges). Define $F_\gamma := M_\gamma \cap S_\gamma$ and let $|F_\gamma|=\ell_\gamma$. Removal of $\gamma$ edges from $S_\gamma$ can be done in $(\gamma+1)$  ways, case~$(0), \ldots,$ case~$(\gamma)$, where for case~($q$) we consider  removal of $(\gamma-q)$ edges from $F_\gamma$ and remaining $q$ edges from $S_\gamma \setminus F_\gamma$.

\noindent {\bf Step~1}: For case~(0), check the existence of perfect matching for removal of every possible $\gamma$ number of edges from $F_\gamma$.

\noindent {\bf Step~2}: For case~(1), remove $(\gamma-1)$ number of edges from $F_\gamma$ and apply  Algorithm~\ref{alg:k=1} with $\B_1$ as the modified bipartite graph obtained after removing $(\gamma-1)$ edges from $\B_\gamma$ and set $S_1$ as $S_\gamma \setminus F_\gamma$. This is done for every possible $(\gamma-1)$ edges.

\noindent {\bf Step~3}: For case~(2), remove $(\gamma-2)$ number of edges from $F_\gamma$ and apply  Algorithm~\ref{alg:k=2} with $\B_2$ as the modified bipartite graph obtained after removing $(\gamma-2)$ edges from $\B_\gamma$ and set $S_2$ as $S_\gamma \setminus F_\gamma$. This is done for every possible $(\gamma-2)$ edges.

\noindent {\bf Step~4}: We follow the similar lines for case~(3) to the case~$(\gamma-1)$. In case~$(q)$, we remove  $(\gamma-q)$ number of edges from $F_\gamma$ and apply the algorithm for $\gamma=q$ with $\B_q$ as the modified bipartite graph obtained after removing $(\gamma-q)$ edges from $\B_\gamma$ and set $S_q$ as $S_\gamma \setminus F_\gamma$. This is done for every possible $(\gamma-q)$ edges.

\noindent {\bf Step~5}: In case~($\gamma$), as $M_\gamma \cap \{S_\gamma \setminus F_\gamma\} = \emptyset $, for any $\gamma$ edge removal from $S_\gamma \setminus F_\gamma$ matching $M_\gamma$ exists. 

For $\B_\gamma=\B(\bA, \bB, \bC, \bK)$ and $S_\gamma = \E_K$, the above steps solve Problem~\ref{prob:verification} for general $\gamma$. The proof of this claim and computational complexity involved is given  in Theorem~\ref{th:algo_k}.

\begin{theorem}\label{th:algo_k}
Consider a closed-loop structured system $(\bA, \bB, \bC, \bK)$. Let Assumption~\ref{asm:irr} holds. Then, Steps~1-5 solves Problem~\ref{prob:verification} when  $\B_k = \B(\bA, \bB, \bC, \bK)$ and edge set $S_k=\E_K$. Further, the time complexity is $O(2^{\gamma-1}n^{\gamma+2.5})$, where $n$ denotes the number of states in the system.
\end{theorem}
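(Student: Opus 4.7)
The plan is to proceed by induction on $\gamma$, with the base cases $\gamma = 1$ and $\gamma = 2$ already established by Theorems~\ref{th:k=1} and~\ref{th:k=2}. Assume as inductive hypothesis that for every $q$ with $1 \leqslant q \leqslant \gamma - 1$, the recursive procedure correctly decides Problem~\ref{prob:verification} for parameter $q$ on any bipartite graph $\B_q$ and any designated edge set $S_q$, within time $O(2^{q-1} n^{q+2.5})$. The inductive step rests on the decomposition built into Steps~1--5: for any failure set $T \subseteq \E_K$ with $|T| = \gamma$, writing $T_F := T \cap F_\gamma$ and $T_R := T \cap (\E_K \setminus F_\gamma)$ and setting $q := |T_R|$, we have $|T_F| = \gamma - q$. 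Thus every candidate $T$ falls into exactly one case $(q)$ for $0 \leqslant q \leqslant \gamma$, and the five steps of the algorithm exhaust these cases.

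For correctness of the inductive step, I would dispatch each case separately. Case~$(\gamma)$ requires no verification because $M_\gamma \cap (\E_K \setminus F_\gamma) = \emptyset$ by construction of $F_\gamma$ as the intersection of the min-cost perfect matching $M_\gamma$ with $S_\gamma = \E_K$; hence $M_\gamma$ survives the removal of any $T$ with $T \subseteq \E_K \setminus F_\gamma$ and remains a perfect matching in the modified graph. Case~$(0)$ is handled by brute enumeration over all $\binom{\ell_\gamma}{\gamma}$ choices of $T_F \subseteq F_\gamma$, each followed by a direct perfect-matching check. For case~$(q)$ with $1 \leqslant q \leqslant \gamma - 1$, for every fixed choice of $T_F \subseteq F_\gamma$ with $|T_F| = \gamma - q$, we invoke the subroutine for parameter $q$ with inputs equal to the bipartite graph obtained from $\B(\bA,\bB,\bC,\bK)$ after removing $T_F$ and designated edge set $\E_K \setminus F_\gamma$; the induction hypothesis guarantees that this invocation correctly verifies resilience against removal of any $q$ additional edges from $\E_K \setminus F_\gamma$. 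Since every failure pattern of $\gamma$ edges is covered by exactly one of these exhaustively enumerated sub-checks, the overall procedure returns \emph{True} if and only if $(\bA, \bB, \bC, \bK)$ is resilient to the failure of any $\gamma$ feedback links.

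For the complexity bound, I would set up the recurrence
\[
T(\gamma, n) = O(n^3) + O\!\left(\binom{\ell_\gamma}{\gamma}\right) O(n^{2.5}) + \sum_{q=1}^{\gamma-1} O\!\left(\binom{\ell_\gamma}{\gamma - q}\right) T(q, n),
\]
where the $O(n^3)$ term is the initial min-cost perfect matching computation, the second term is case~$(0)$, and the sum collects cases $(1)$ through $(\gamma - 1)$. Using $\ell_\gamma = O(n)$ (since $|F_\gamma| \leqslant |M_\gamma| = O(n)$), case~$(0)$ contributes $O(n^{\gamma + 2.5})$. Substituting the inductive hypothesis $T(q, n) = O(2^{q-1} n^{q+2.5})$ into the sum yields
\[
\sum_{q=1}^{\gamma-1} O(n^{\gamma - q}) \cdot O(2^{q-1} n^{q + 2.5}) \;=\; O(n^{\gamma + 2.5}) \sum_{q=1}^{\gamma - 1} 2^{q-1} \;=\; O\!\left((2^{\gamma-1} - 1)\, n^{\gamma + 2.5}\right),
\]
so combining everything gives $T(\gamma, n) = O(2^{\gamma-1} n^{\gamma + 2.5})$, matching the claimed bound.

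The principal obstacle I anticipate is the bookkeeping in the recursive decomposition: one must argue that the partition by $q = |T \cap (\E_K \setminus F_\gamma)|$ is both exhaustive and that, within case~$(q)$, the inner call on the designated set $\E_K \setminus F_\gamma$ (not $\E_K$) suffices to catch every admissible completion of $T_R$ --- this relies crucially on the choice of $F_\gamma$ from a \emph{minimum-cost} perfect matching so that case~$(\gamma)$ trivializes. Once that invariant is nailed down, the induction runs cleanly, and the geometric sum $\sum 2^{q-1}$ is what produces the $2^{\gamma - 1}$ factor in the final runtime.
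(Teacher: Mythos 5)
Your proposal is correct and follows essentially the same route as the paper: partition the $\gamma$-edge failure sets by how many edges fall in $F_\gamma$ versus $\E_K\setminus F_\gamma$, dispose of case~$(\gamma)$ via survival of the min-cost matching $M_\gamma$, enumerate case~$(0)$ directly, recurse for the intermediate cases, and bound the runtime by strong induction with the geometric sum $\sum_q 2^{q-1}$ yielding the $2^{\gamma-1}$ factor. Your explicit statement of the inductive hypothesis for arbitrary inputs $(\B_q,S_q)$ is, if anything, slightly more careful than the paper's version, but the argument and the resulting bound $O(2^{\gamma-1}n^{\gamma+2.5})$ are identical.
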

\begin{proof}
For $\B_\gamma = \B(\bA, \bB, \bC, \bK)$ and $S_\gamma = \E_K$, $M_\gamma$ is a perfect matching in $\B(\bA, \bB, \bC, \bK)$ and $F_\gamma \subseteq \E_K$. Removal of any $\gamma$ edges from $\E_K$ can be done in $(\gamma+1)$  ways, case~$(0), \ldots,$ case~$(\gamma)$, where for case~($q$) we consider  removal of $(\gamma-q)$ edges from $F_\gamma$ and remaining $q$ edges from $\E_K \setminus F_\gamma$. For the system to be resilient, it must have a perfect matching in all of these cases.
Steps~1-5 checks for the existence of perfect matching for  each of these cases and  hence verifies resilience of the system for removal of any $\gamma$ feedback edges correctly.

 Now we prove the complexity of the algorithm.
Let the theorem statement be denoted as $P(\gamma)$. We prove the theorem using Strong Induction. \\
\noindent Base step: For $\gamma=1$, $P(1)$ is the complexity of Algorithm~\ref{alg:k=1}, which is $O(n^{3.5})$ (Theorem~\ref{th:k=1}). Hence $P(\gamma)$ is true for $\gamma=1$. \\
\noindent Induction step: Assume that the statement $P(\gamma)$ is true for $\gamma \in \{1,2,\ldots,q\}$. Now we have to prove that the statement $P(\gamma)$ is true for $\gamma=q+1$. 
Consider removal of $q+1$ links. This can be done in $q+2$ possible cases as shown in Steps~1-5.

 In case~($0$), we check the existence of perfect matching for removal of every possible $(q+1)$ number of edges from $F_{q+1}$. As $\ell_{q+1}$ is $O(n)$, there are $\binom{n}{q+1}$ possible combinations which is $O(n^{q+1})$ and time complexity to check  the existence of perfect matching for each combination is $O(n^{2.5})$, so the time complexity of case~($0$) is $O(n^{q+3.5})$. 

 In case ($i$), where $i \in \{1,\ldots,q\}$, for removal of every possible $(q+1-i)$ number of edges from $F_{q+1}$ we apply the algorithm for $\gamma=i$ with $\B_i$ as the modified bipartite graph obtained after removing $(q+1-i)$ edges from $\B_{q+1}$ and set $S_i$ as $S_{q+1} \setminus F_{q+1}$. From the strong induction hypothesis, the time complexity of algorithm for $\gamma=i$ is $O(2^{i-1}n^{i+2.5})$. As $\ell_i$ is $O(n)$, the possible combinations for removing $(q+1-i)$ edges from $F_{q+1}$ is $O(n^{q+1-i})$ and hence the total time complexity of case ($i$) is $O(2^{i-1}n^{q+3.5})$.

Case~($q+1$) requires no verification. In addition to these a minimum cost perfect matching algorithm is used which is of complexity $O(n^3)$. Thus, the total time complexity of the algorithm for $\gamma=q+1$ is 

\begin{eqnarray}
O(n^{q+3.5})& + & \Sigma^{q}_{i=1} O(2^{i-1}n^{q+3.5})  + O(n^3) \nonumber \\
& =& O(2^{q}n^{q+3.5}) 
 =  O(2^{(q+1)-1}n^{(q+1)+2.5}) \nonumber
\end{eqnarray}

So the statement $P(\gamma)$ is true for $\gamma=q+1$, whenever it is true for $\gamma$ $\in$ $\{1, \ldots, q\}$. Hence by the principle of mathematical induction, the statement $P(\gamma)$ is true for all $\gamma$. This concludes the proof. 
\end{proof}

Note that, solving Problem~\ref{prob:verification} using our approach has complexity polynomial in $n$ and exponential in $\gamma$, where $\gamma << n$.  The value of $\gamma$ is typically small as the attacker will attack fewest number of links to disable the operation of the system on account of the resource and the infrastructure constraints involved in an attack.
\begin{remark}\label{rem:comp2}
For a fixed $\gamma$, $2^{\gamma-1}$ is not varying with $n$. So the time complexity of the algorithm for general $\gamma$ can be written as $O(n^{\gamma+2.5})$.  
\end{remark}
Solving Problem~\ref{prob:verification} using an exhaustive search-based technique involves checking existence of perfect matching for $\binom{n^2}{\gamma}$ number of bipartite graphs. Thus the computational complexity of an exhaustive search-based technique is $O(n^{2\gamma+2.5}) = O(n^\gamma \,n^{\gamma+2.5})$. Our algorithm is computationally  efficient than applying a brute-force exhaustive search-based algorithm. 
When the links under attack are small, the computational efficiency of our approach is  better.
%%%%%%%%%%%%%%%%%%%%%%%%%%%%%%%%%%%%%%%%%%%%%
%\subsection{Experimental Results}
%Need to write.

%%%%%%%%%%%%%%%%%%%%%%%%%%%%%%%%%%%%%%%%%%%%%%%%%%%%%%%%%%%%%%%%

\section{Conclusion}\label{sec:conclu}
This paper considered the resilience of a large scale closed-loop structured system  when subjected to dysfunctional feedback connections. We discussed two problems in this paper: (i)~given a structured system, input, output, and feedback matrices, verify whether the closed-loop system retains the arbitrary  pole placement property even after the simultaneous failure of any subset of feedback links of cardinality at most $\gamma$ ({\em verification problem}) and (ii)~given a structured system, input and output matrices, design a sparsest feedback matrix such that the resulting closed-loop system retains the arbitrary  pole placement property even after the simultaneous failure of any subset of feedback links of cardinality at most $\gamma$ ({\em design problem}). 

Firstly, we showed that the verification problem is NP-complete (Theorem~\ref{th:NP}). The complexity of the problem is obtained from a reduction of a known NP-complete problem, the blocker problem. We also showed that the verification problem is NP-complete even when the state digraph of the structured system is irreducible. 
Subsequently, we  proposed an algorithm for solving Problem~\ref{prob:verification} for the class of irreducible structured systems. We first proposed  polynomial time algorithms to solve the resilience problem for the case of one edge removal and two edges removal, i.e., $\gamma=1, 2$, respectively (Algorithms~\ref{alg:k=1}, \ref{alg:k=2}). The correctness and complexity of Algorithms~\ref{alg:k=1}, \ref{alg:k=2} are also proved in the paper (Theorems~\ref{th:k=1}, \ref{th:k=2}). Finally,  we considered the general  case, where $\gamma$ is any positive number. For the general case, we proposed a recursive algorithm which is {\em pseudo-polynomial} in factor $\gamma$ and proved the correctness (Theorem~\ref{th:algo_k}). We show that our algorithm performs much better than an exhaustive search-based algorithm in general and specifically for smaller values of $\gamma$.

We proved that the sparsest resilient feedback design problem is NP-hard (Theorem~\ref{th:prob1_NP}). The NP-hardness of the problem is obtained from a reduction of a known NP-complete problem, the minimum set multi-covering problem. We also proved that the design problem is inapproximable to factor $(1-o(1))\log\,n$, where $n$ denotes the system dimension (Theorem~\ref{th:approxi_NP_sol}). We showed that the design problem is NP-hard for two special cases as well; when the state digraph of the structured system is irreducible and also when all the state nodes in the state digraph are spanned by disjoint cycles (structurally cyclic). We then analyzed structurally cyclic systems with a special feedback structure, so-called back-edge feedback structure, for which the  NP-hardness  and the inapproximability  results hold.  A polynomial-time  $O(\log\,n)$-optimal approximation algorithm to solve the design problem for structurally cyclic systems with back-edge feedback structure is presented by reducing the design problem to minimum set multi-covering problem (Theorem~\ref{th:th_multi_approx}). Identifying other relevant feedback structures  that have computationally efficient solution approach for the verification and the design problem is part of future work. Improving the complexity of the pseudo-polynomial algorithm for solving the verification problem is also part of future work.
%%%%%%%%%%%%%%%%%%%%%%%%%%%%%%%%%%%%%%
\bibliographystyle{myIEEEtran}  
\bibliography{Resilience}
%%%%%%%%%%%%%%%%%%%%%%%%%%%%%%%%%%%%%%%%%%%%%%%
\end{document}